\theoremstyle{plain}
\newtheorem{thm}{Theorem}[section]
\newtheorem{lem}[thm]{Lemma}
\newtheorem{cor}[thm]{Corollary}
\newtheorem{prop}[thm]{Proposition}
\theoremstyle{definition}
\theoremstyle{remark}
\newcommand{\lemref}[1]{\hyperref[#1]{Lemma \ref*{#1}}}
\newcommand{\thmref}[1]{\hyperref[#1]{Theorem \ref*{#1}}}
\newcommand{\propref}[1]{\hyperref[#1]{Proposition \ref*{#1}}}
\newcommand{\corref}[1]{\hyperref[#1]{Corollary \ref*{#1}}}
\newcommand{\defref}[1]{\hyperref[#1]{Definition \ref*{#1}}}
\newcommand{\remref}[1]{\hyperref[#1]{Remark \ref*{#1}}}
\newcommand{\conjref}[1]{\hyperref[#1]{Conjecture \ref*{#1}}}
\newcommand{\Ker}{\mathrm{Ker}}
\def \Z {\mathbb{Z}}
\newcommand*{\defeq}{\mathrel{\rlap{%
                     \raisebox{0.27ex}{$\m@th\cdot$}}%
                     \raisebox{-0.27ex}{$\m@th\cdot$}}%
                     =}
\numberwithin{equation}{section}
\def\@setcopyright{}
\def\serieslogo@{}
\begin{document}

\title{Virtual retraction and Howson's theorem in pro-$p$ groups}

\author{Mark Shusterman}
\address{Raymond and Beverly Sackler School of Mathematical Sciences, Tel-Aviv University, Tel-Aviv, Israel}
\email{markshus@mail.tau.ac.il}

\author{Pavel Zalesskii}
\address{Departamento de Matem\'atica,
~Universidade de Bras\'\i lia,
70910-900 Bras\'\i lia DF
Brazil}
\email{pz@mat.unb.br}

\date{}
\begin{abstract}

We show that for every finitely generated closed subgroup $K$ of a non-solvable Demushkin group $G$,
there exists an open subgroup $U$ of $G$ containing $K$, 
and a continuous homomorphism $\tau \colon U \to K$ satisfying $\tau(k) = k$ for every $k \in K$.
We prove that the intersection of a pair of finitely generated closed subgroups of a Demushkin group is finitely generated (giving an explicit bound on the number of generators).
Furthermore, we show that these properties of Demushkin groups are preserved under free pro-$p$ products,
and deduce that Howson's theorem holds for the Sylow subgroups of the absolute Galois group of a number field.
Finally, we confirm two conjectures of Ribes, thus classifying the finitely generated pro-$p$ M. Hall groups. 

\end{abstract}

\maketitle

\section{Introduction}

An immediate consequence of M. Hall's work \cite{Hall} is the virtual retraction of a free group on its finitely generated subgroups. 
In other words, given a finitely generated subgroup $K$ of a free group $F$, 
there exists a finite index subgroup $U \leq F$ containing $K$, 
and a subgroup $N \lhd U$ such that $KN = U$ and $K \cap N = \{1\}$ ($K$ is a retract of $U$).
The virtual retraction property has been established also for surface groups by Scott in \cite{Scott}, 
and is the subject of the work \cite{W} by Wilton on limit groups. 
Generalizing Hall's result on free groups, Haglund has shown in \cite{Hag} that quasi-convex subgroups of both right-angled Artin groups and hyperbolic right-angled Coxeter groups are virtual retracts.
Furthermore, virtual retraction plays an important role in the works \cite{BHW, HagWise} of Bergeron, Haglund and Wise,
and its significance in geometric group theory is highlighted by the work \cite{LR} of Long and Reid.
Moreover, as can be seen from \cite{KM, MV, MR, Rein},
retractions are closely related to equations over groups and to other questions of group theory.

In this work we continue the study of virtual retraction in the pro-$p$ setting.
This study has been initiated in \cite{Lub}, 
where Lubotzky established the virtual retraction property for finitely generated free pro-$p$ groups.
Lubotzky's result has been generalized in \cite{Rib} to arbitrary free pro-$p$ groups by Ribes.
In light of the aforementioned results,
virtual retraction has been proposed for pro-$p$ surface groups (pro-$p$ completions of fundamental groups of closed orientable surfaces) in \cite[Open Question 9.5.9]{RZ}.

Pro-$p$ surface groups belong to a very important class of pro-$p$ groups called Demushkin groups.
These are finitely generated one-relator pro-$p$ groups $G$ for which the cup product
\begin{equation}
\cup \colon H^1(G,\mathbb{F}_p) \times H^1(G,\mathbb{F}_p) \to H^2(G,\mathbb{F}_p)
\end{equation}
is non-degenerate.
Demushkin groups arise also as Galois groups of maximal $p$-extensions of $p$-adic fields,
and as maximal pro-$p$ quotients of etale fundamental groups of projective curves.
They were thoroughly studied by Demushkin, Serre, Labute and others (see \cite{Lab}),
and explicit forms for their relations were obtained.
Arithmetic and field theoretic aspects of Demushkin groups are treated in \cite{Ef2, Kon, Lab0, LLMS, MW0, MW, NSW, Ser, Wing}, 
and as can be seen from \cite{DL, Koc, KZ0, KZ1, SZ, Son}, 
contemporary group theorists continue the study of these groups.

We answer the aforementioned question on virtual retraction in pro-$p$ surface groups in the affirmative, 
thus generalizing the results of Lubotzky and Ribes, and providing the first family of non-free (pro-$p$) groups with the virtual retraction property.

\begin{thm} \label{FirstRes}

Let $G$ be a Demushkin group of rank greater than $2$,
and let $K$ be a closed topologically finitely generated subgroup of $G$.
Then there exists an open subgroup $U$ of $G$ containing $K$ such that $K$ is a continuous retract of $U$.

\end{thm}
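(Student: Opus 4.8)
The plan is to reduce the problem to a statement about subgroups of Demushkin groups that split off as free factors inside suitable open subgroups, and then to produce the retraction by hand using the structure theory of Demushkin groups. First I would recall that a Demushkin group $G$ of rank $>2$ is a Poincaré duality group of dimension $2$, hence every open subgroup $U$ of $G$ is again Demushkin of rank $>2$; this is what lets us pass freely between $G$ and its open subgroups. The key auxiliary fact I would want is that a topologically finitely generated closed subgroup $K$ of infinite index in a Demushkin group is free pro-$p$: infinite-index subgroups of a dimension-$2$ Poincaré duality pro-$p$ group have cohomological dimension $\leq 1$, hence are free pro-$p$ by a theorem of Serre. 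So there are really two cases: either $K$ is open in $G$, in which case we may take $U=K$ and $\tau=\mathrm{id}$ and there is nothing to prove; or $K$ has infinite index, in which case $K$ is a free pro-$p$ group of finite rank.

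In the infinite-index case, the strategy is to find an open $U \supseteq K$ in which $K$ is a free factor, i.e. $U = K \amalg L$ for some closed subgroup $L$; then the obvious projection $U \to K$ killing $L$ is a continuous retraction. To arrange this, I would use an Euler-characteristic / rank count together with the Greenberg–Stallings–type phenomenon in the pro-$p$ world. Concretely: for an open subgroup $U$ of index $n$ in $G$, we have $d(U) - 1 = n(d(G)-1)$ where $d$ denotes minimal number of generators (the analogue of the Nielsen–Schreier index formula for Demushkin groups, coming from Poincaré duality). On the other hand, by Ribes's results on free factors and efficient subgroups in one-relator pro-$p$ groups — in particular the theory of \emph{efficient} subgroups of Demushkin groups, where a finitely generated subgroup $K$ is efficient in $U$ if $d(U) = d(K) + d(U/\!/K)$ in the appropriate sense — one shows that after passing to a deep enough open subgroup $U$, the subgroup $K$ becomes efficient, and efficiency of a free pro-$p$ subgroup in a Demushkin group forces it to be a free factor. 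The engine here is that $H^2$ of $U$ is one-dimensional, so the single relation of $U$ can be "used up" away from $K$ once the index is large enough that a transversal argument frees up enough cohomological room.

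Carrying this out, the steps in order are: (1) dispose of the open case and reduce to $K$ free pro-$p$ of finite rank and infinite index; (2) establish the index formula $d(U)-1 = [G:U](d(G)-1)$ for open $U \leq G$ via Poincaré duality; (3) show that for $U$ open and sufficiently small (e.g. contained in a suitable principal congruence-type subgroup, or chosen so that $K$ lies in the Frattini-type position forced by an intersection argument), the inclusion $K \hookrightarrow U$ realizes $K$ as a free factor of $U$ — this is the crux and would use the cohomological non-degeneracy of the cup product on $U$ to peel off a complementary free pro-$p$ factor $L$ with $U = K \amalg L$; (4) define $\tau \colon U \to K$ as the retraction onto the free factor, which is continuous and satisfies $\tau(k)=k$. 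The main obstacle is unquestionably step (3): producing the complementary factor $L$ and proving $U = K \amalg L$ rather than merely that $K$ embeds "nicely." I expect this to require a careful analysis of the relation module of $U$ relative to $K$ — showing that one can choose a basis of $H^1(U,\mathbb{F}_p)$ restricting onto $H^1(K,\mathbb{F}_p)$ such that the defining relator of $U$, viewed via the cup product pairing, is supported entirely on the complementary coordinates — together with an application of a pro-$p$ version of the Kurosh subgroup theorem (or Ribes's results on the subgroup structure of free pro-$p$ products) to upgrade the cohomological splitting to an actual free product decomposition.
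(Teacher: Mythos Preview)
Your strategy has a genuine obstruction at step (3): an open subgroup $U$ of a Demushkin group $G$ of rank $>2$ is itself Demushkin, and a Demushkin group is \emph{never} a nontrivial free pro-$p$ product. Indeed, if $U = K \amalg L$ with $K$ free pro-$p$ and nontrivial, then $H^2(U,\mathbb{F}_p) \cong H^2(K,\mathbb{F}_p)\oplus H^2(L,\mathbb{F}_p) = H^2(L,\mathbb{F}_p)$, and the cup product on $H^1(U,\mathbb{F}_p) = H^1(K,\mathbb{F}_p)\oplus H^1(L,\mathbb{F}_p)$ vanishes on the $K$-summand (since $H^2(K,\mathbb{F}_p)=0$), so the form is degenerate, contradicting the Demushkin property. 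Equivalently, the paper's \thmref{RibThm} shows Demushkin groups of rank $>2$ are not M.~Hall. So you cannot hope to make $K$ a free factor of any open $U$; \emph{retract} and \emph{free factor} genuinely diverge here. (Your index formula is also off: for Demushkin groups Poincar\'e duality gives $d(U)-2 = [G:U](d(G)-2)$, not $d(U)-1 = [G:U](d(G)-1)$.)

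What the paper does instead is to construct the retraction directly, without any free-product decomposition. Using \lemref{EmbProbLem}, it suffices to solve the embedding problem of lifting $\lambda\colon U \to K/\Phi(K)$ through $\varphi\colon K \to K/\Phi(K)$. The key is to choose $U$ and a Demushkin presentation
\[
U = \langle x_1,y_1,\dots,x_t,y_t \mid x_1^\gamma[x_1,y_1]x_2^\delta[x_2,y_2]\cdots[x_t,y_t]\rangle
\]
so that $\lambda(x_i)=0$ for all $i$; then one defines $\mu(x_i)=1$ and $\mu(y_i)\in\varphi^{-1}(\lambda(y_i))$, and the relator is trivially satisfied. Arranging $\lambda(x_i)=0$ is exactly your ``relator supported on complementary coordinates'' idea, but the mechanism is symplectic rather than free-product-theoretic: one shows (\corref{IsotrCor}) that $\mathrm{Im}(\mathrm{res}^G_U)\subseteq H^1(U,\mathbb{F}_p)$ is \emph{isotropic} for the cup product, hence after adding $\rho(\chi_\sigma^U)$ if $q\neq 0$ (\propref{HeredProp}) one obtains an isotropic subspace $N$ containing $\mathrm{Im}(\lambda^*)$, and then symplectic linear algebra over the local ring $\mathbb{Z}_p/q\mathbb{Z}_p$ (\propref{CompProp}, \propref{LiftProp}, \corref{IsoCor}) produces a symplectic basis, and hence a Demushkin presentation, in which $N$ annihilates all the $x_i$. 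The output is a semidirect complement $N\lhd U$ with $KN=U$ and $K\cap N=1$, not a free complement.
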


The theorem says that every finitely generated subgroup of a Demushkin group (of rank $> 2$) is a semi-direct factor of some finite-index subgroup.
Our assumption that the rank of $G$ exceeds $2$ is necessary, 
as shown by $K \defeq \mathbb{Z}_3$ in $G \defeq K \rtimes \mathbb{Z}_3$ with a nontrivial action.

One of the obstructions to virtual retraction in Demushkin groups is the existence of torsion in the abelianization of open subgroups. 
It is therefore a consequence of \thmref{FirstRes} that the image of $K$ in the abelianization of $U$ is torsion-free. 
In the course of the proof of \thmref{FirstRes} we develop a theory of symplectic forms over local rings,
and give a simpler proof for the virtual retraction property in free pro-$p$ groups 
(more generally, we establish the virtual retraction of projective profinite groups on certain finitely generated subgroups).
One of the main tools in the proof of \thmref{FirstRes} is the dualizing module,
and it is advisable to read the proof having in mind the case where $G$ is the Galois group of the maximal $p$-extensions of $\mathbb{Q}_p(\zeta_p)$.
In this case, many of our considerations acquire an arithmetic flavor.
Apart from the general theory of Demushkin groups given in \cite{Lab},
our proof uses and generalizes ideas from \cite{Son}. 
As a consequence of \thmref{FirstRes}, 
we obtain a pro-$p$ analogue of a theorem by Karrass and Solitar, 
variants of which are studied in \cite{Arzh, Jar, Steinb}.

\begin{cor} \label{ImmCor}

Let $G$ be a Demushkin group of rank greater than $2$,
and let $K$ be a closed topologically finitely generated subgroup of infinite index in $G$.
Then there exists a nontrivial closed normal subgroup $N$ of $G$ that intersects $K$ trivially.

\end{cor}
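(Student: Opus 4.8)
The plan is to feed \thmref{FirstRes} into the normal-core construction. If $K=\{1\}$ we may take $N=G$, so assume $K\neq\{1\}$, hence $K$ is infinite. By \thmref{FirstRes} there is an open subgroup $U$ of $G$ with $K\leq U$ together with a continuous retraction $\tau\colon U\to K$; put $M=\Ker\tau\lhd U$, so that $U/M\cong K$. Being open in a Demushkin group of rank $>2$, the group $U$ is again Demushkin of rank $>2$ (Schreier's formula), so $\mathrm{cd}(U)=2$; being of infinite index in the pro-$p$ Poincar\'e-duality group $G$, the subgroup $K$ is free pro-$p$. Let $V$ be the normal core of $U$ in $G$, an open normal subgroup of $G$. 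The composite $V\hookrightarrow U\xrightarrow{\tau}K$ has kernel $V\cap M$ and an image of finite index in $K$, so that image is an infinite free pro-$p$ group; hence $V\cap M$ has infinite index in $V$, and $V\cap M\neq\{1\}$ --- otherwise $V$ would embed into $K$ and be free pro-$p$, contradicting $\mathrm{cd}(V)=2$.

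Since $N_G(M)\supseteq U$ is open, $M$ has only finitely many $G$-conjugates $M=M_1,\dots,M_r$, and I set $N=\bigcap_{g\in G}gMg^{-1}=\bigcap_{i=1}^{r}M_i\lhd G$. From $N\leq M$ we get $N\cap K\leq M\cap K=\{1\}$, so it remains to prove $N\neq\{1\}$. As $V\subseteq gUg^{-1}$ for every $g\in G$, the group $V$ normalizes each $M_i$, so $M_i\cap V\lhd V$; being $G$-conjugate to $M\cap V$, each $M_i\cap V$ is nontrivial and of infinite index in $V$. Thus $N\supseteq\bigcap_{i=1}^{r}(M_i\cap V)$, and an easy induction on $r$ reduces the remaining task to the claim: if $V$ is a Demushkin group of rank $>2$ and $A,B\lhd V$ are nontrivial, closed, and of infinite index in $V$, then $A\cap B\neq\{1\}$.

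Suppose $A\cap B=\{1\}$. Then $[A,B]\subseteq A\cap B=\{1\}$, so $A$ and $B$ commute elementwise and $AB$ is a closed subgroup of $V$ isomorphic to $A\times B$. Since $V$ is a pro-$p$ Poincar\'e-duality group of dimension $2$, its closed subgroups of infinite index have cohomological dimension $\leq 1$; but $AB=A\times B$ contains $\overline{\langle a\rangle}\times\overline{\langle b\rangle}\cong\mathbb{Z}_p^{2}$ for any $1\neq a\in A$ and $1\neq b\in B$, so $\mathrm{cd}(AB)\geq\mathrm{cd}(\mathbb{Z}_p^{2})=2$ and therefore $AB$ must be open in $V$. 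Hence $AB$ is Demushkin of rank $>2$; in particular it is finitely generated, so $A\cong AB/B$ and $B\cong AB/A$ are finitely generated free pro-$p$ groups and $\dim_{\mathbb{F}_p}H^{2}(AB,\mathbb{F}_p)=1$. By the K\"unneth formula, $\dim_{\mathbb{F}_p}H^{2}(A\times B,\mathbb{F}_p)=\dim_{\mathbb{F}_p}H^{1}(A,\mathbb{F}_p)\cdot\dim_{\mathbb{F}_p}H^{1}(B,\mathbb{F}_p)$, so $A\cong B\cong\mathbb{Z}_p$ and $AB\cong\mathbb{Z}_p^{2}$ has rank $2$, contradicting that an open subgroup of a Demushkin group of rank $>2$ has rank $>2$. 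The main obstacle is exactly this claim --- that two nontrivial closed normal subgroups of a Demushkin group of rank $>2$ cannot meet trivially; the reduction to it via \thmref{FirstRes} and the normal core is routine.
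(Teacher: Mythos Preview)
Your argument is correct and follows essentially the same architecture as the paper: apply \thmref{FirstRes} to obtain the retraction kernel $M\lhd U$, pass to the normal core $V$ of $U$, and intersect the finitely many $G$-conjugates of $M$ (equivalently, of $M\cap V$) inside $V$, reducing everything to the claim that two nontrivial closed normal subgroups of a Demushkin group of rank $>2$ cannot intersect trivially. The paper isolates exactly this claim as \propref{intersectProp} and proves it more directly than you do: rather than showing $AB$ is open and invoking K\"unneth, it simply picks $1\neq a\in A$, $1\neq b\in B$, observes that $\langle a,b\rangle\cong\mathbb{Z}_p^{2}$ is $2$-generated and hence (by the rank formula \eqref{DemRankEq}) of infinite index, so free pro-$p$ --- an immediate contradiction. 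Your detour through $AB$ open and $H^2(A\times B,\mathbb{F}_p)$ works, but the extra hypothesis that $A,B$ have infinite index is unnecessary, and the paper's two-line version is worth knowing.
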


We further enrich the family of pro-$p$ groups possessing the virtual retraction property.

\begin{thm} \label{PresVRThm}

The virtual retraction property is preserved under free pro-$p$ products.

\end{thm}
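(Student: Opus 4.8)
The plan is to combine the pro-$p$ Kurosh subgroup theorem with the virtual retraction property of the free factors, together with the theorem of Lubotzky \cite{Lub} and Ribes \cite{Rib} on free pro-$p$ groups, and to reassemble the resulting local retractions into a global one by a ``relative M. Hall completion'' carried out in the setting of profinite graphs of groups. For the main argument write $G = G_1 \amalg \cdots \amalg G_n$, where each $G_i$ has the virtual retraction property, and let $K$ be a topologically finitely generated closed subgroup of $G$. Since $K$ is finitely generated, the pro-$p$ Kurosh subgroup theorem (obtained from the action of $K$ on the standard profinite tree of the decomposition, see \cite{RZ}) yields a finite decomposition
\[
K \;=\; F \amalg K_1 \amalg \cdots \amalg K_m, \qquad K_j = K \cap G_{i_j}^{\,g_j},
\]
in which $F$ is a finitely generated free pro-$p$ group, $g_j \in G$, and $K_1, \dots, K_m$ are the finitely many nontrivial intersections of $K$ with conjugates of the factors; equivalently, $K$ is the fundamental group $\Pi_1(\mathcal{A})$ of a finite graph of pro-$p$ groups $\mathcal{A}$ over a finite connected graph $\Gamma$, with trivial edge groups, vertex groups the $K_j$, and $\pi_1(\Gamma) \cong F$.

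The available local retractions are as follows: for each $j$ the virtual retraction property of $G_{i_j}$ supplies an open subgroup $V_j$ of $G_{i_j}^{\,g_j}$ with $K_j \leq V_j$ and a continuous retraction $\tau_j \colon V_j \to K_j$, while by \cite{Lub, Rib} the finitely generated free pro-$p$ group $F$ is a virtual retract of every free pro-$p$ group containing it. I would then construct an open subgroup $U$ of $G$ with $K \leq U$ realized as $U = \Pi_1(\mathcal{B})$ for a finite graph of pro-$p$ groups $\mathcal{B}$, with trivial edge groups, over a finite graph $\Delta$ that contains $\Gamma$ as a retract, in such a way that: the inclusion $\mathcal{A} \hookrightarrow \mathcal{B}$ realizes the inclusion $K \hookrightarrow U$; the vertex group of $\mathcal{B}$ over each $\Gamma$-vertex carrying some $K_j$ lies between $K_j$ and $V_j$; and every remaining vertex group of $\mathcal{B}$ meets $K$ trivially. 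The passage from $\mathcal{A}$ to $\mathcal{B}$ is a pro-$p$, graph-of-groups incarnation of M. Hall's completion of a finite immersion of graphs to a finite covering: one enlarges the vertex groups of $\mathcal{A}$ (replacing $K_j$ by $V_j$, and enlarging any vertex group sitting over a factor $G_i$ and not among the $K_j$ to an open subgroup of that $G_i$), keeps the edge groups trivial, and attaches finitely much further structure to $\Gamma$ so that the underlying morphism of graphs of groups $\mathcal{B} \to \mathcal{G}$ -- where $\mathcal{G}$ is the standard graph of groups of $G = G_1 \amalg \cdots \amalg G_n$ -- becomes a covering; this is exactly what forces $[G : U] < \infty$, and the profinite-tree formalism of \cite{RZ} guarantees that $U$ embeds into $G$ as an open subgroup. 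A convenient device for gaining room and for controlling the conjugate free factors is to work inside $KM$ for an open normal subgroup $M \lhd G$ chosen, by compactness since only finitely many conditions are imposed, so that $M \cap G_{i_j}^{\,g_j} \leq V_j$ and $K \cap (MG_{i_j}^{\,g_j}) = K_j$ for every $j$: already then every conjugate free factor of $KM$ is sandwiched between a conjugate of some $K_j$ and the corresponding conjugate of $V_j$ or meets $K$ trivially, and the remaining task is to correct the free part, which one does by pulling back along the projection of $KM$ onto its free quotient an open subgroup furnished by the Lubotzky--Ribes theorem.

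Granting such a $\mathcal{B}$, I would then define $\tau \colon U \to K$ to be the homomorphism induced by the morphism of graphs of groups $\mathcal{B} \to \mathcal{A}$ whose data are a graph retraction $\Delta \to \Gamma$ on underlying graphs, the restriction of $\tau_j$ followed by the inclusion $K_j \hookrightarrow K$ on the vertex group over each $\Gamma$-vertex carrying $K_j$, and the trivial map on every other vertex group. By functoriality of $\Pi_1$ (equivalently, by the universal property of the free pro-$p$ product) this is a continuous homomorphism $U \to K$, and since the graph retraction fixes $\Gamma$ and each $\tau_j$ fixes $K_j$, the composite $K \hookrightarrow U \xrightarrow{\tau} K$ is the identity on $F \amalg K_1 \amalg \cdots \amalg K_m = K$. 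Thus $K$ is a continuous retract of the open subgroup $U$, as required. The case of an arbitrary free pro-$p$ product is handled by the same method in the continuous (sheaf-theoretic) setting for free pro-$p$ products, since only the finitely many factors meeting $K$ up to conjugacy need to be modified.

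The step I expect to be the main obstacle is the construction of $\mathcal{B}$: one must at once keep $U = \Pi_1(\mathcal{B})$ open in $G$ (the covering conditions of the completion), force the conjugate free factors of $U$ to be sandwiched between the $K_j$ and the $V_j$ or to meet $K$ trivially, and -- the subtlest point -- arrange that the free factor $F$ of $K$ survives inside $U$ as a retract, that is, that $\Gamma$ can be kept as a retract of the underlying graph of $\mathcal{B}$. Reconciling the two Kurosh decompositions, of $K$ and of $U$, is where the profinite-tree bookkeeping concentrates, and it is precisely here that one calls on both the auxiliary normal subgroup $M$ and the virtual retraction property inside a free pro-$p$ group.
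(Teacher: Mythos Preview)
Your plan is the paper's plan: Kurosh-decompose $K$, retract each non-free factor $K_j$ inside an open subgroup of the corresponding conjugate of $G_{i_j}$ using the hypothesis, and handle the free part $F$ via Lubotzky--Ribes applied to the ``free quotient'' of a suitable open subgroup containing $K$. The device you propose---pass to an open $KM$ that separates the relevant double cosets and sandwiches the conjugate factors between $K_j$ and $V_j$, then pull back along the projection to the free quotient---is precisely the architecture of the paper's proof (there the role of $KM$ is played by a cofinal system of open $U$ with $K \leq U \leq V$, and the free quotient is $F_U \defeq U/\tilde U$, where $\tilde U$ is the closed subgroup generated by all $U \cap G_i^g$).

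The obstacle you flag is genuine, and your sketch does not close it; two specific ingredients are missing. First, one must know that $F$ embeds in the free quotient $F_U$ for some such $U$: the paper proves $\tilde K = \bigcap_U \tilde U$ by a compactness argument on finite quotients, deduces $F \cong \varprojlim_U F_U$, and then uses the Hopfian property of finitely generated pro-$p$ groups to find a level at which the projection $F \to F_U$ is injective. Second---and this is what your graph-retraction language conceals---after pulling back an open subgroup of $F_U$ in which the image of $F$ is a free factor, the resulting open $N \leq G$ has its \emph{own} Kurosh free factor $F_N$, and there is no a priori reason $F$ sits inside $F_N$, let alone as a retract (in your picture: no reason $\Gamma$ is a retract of $\Delta$). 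The paper's key device here is \lemref{ProdRepLem}: if $B$ is free pro-$p$ and $C \leq A \amalg B$ projects isomorphically onto $B$, then $A \amalg B = A \amalg C$. This is established not by Bass--Serre manipulations but by Melnikov's homological criterion for free pro-$p$ decompositions, and it allows one to replace $F_N$ in the Kurosh decomposition of $N$ by $F \amalg Q$ (with $Q$ a complement of the image of $F$ in $F_N$), after which the retraction $N \to K$ is defined factor by factor exactly as you describe.
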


We also give an example of a (non-free) pro-$p$ group that has the virtual retraction property, but is not finitely generated.

\begin{cor} \label{adicSylowCor}

A $p$-Sylow subgroup of the absolute Galois group of $\mathbb{Q}_p$ has the virtual retraction property.

\end{cor}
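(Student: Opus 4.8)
\emph{Identifying the group.} Fix a $p$-Sylow subgroup $S$ of $G \defeq \Gal(\mybar{\mathbb{Q}_p}/\mathbb{Q}_p)$, and let $k$ be its fixed field; thus $k$ is the maximal algebraic extension of $\mathbb{Q}_p$ all of whose finite subextensions have degree prime to $p$. Every finite extension of $k$ inside $\mybar{\mathbb{Q}_p}$ has $p$-power degree, so $S = \Gal(\mybar{\mathbb{Q}_p}/k)$ is a pro-$p$ group; it is an extension of $\mathbb{Z}_p$ by the wild inertia subgroup of $\mathbb{Q}_p$, which is free pro-$p$ of countably infinite rank, so $S$ is not topologically finitely generated. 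The plan is to show that $S$ is a ``Demushkin group of countably infinite rank'' and that the argument proving \thmref{FirstRes} applies to it essentially verbatim.

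\emph{Structural properties.} First I would record the cohomological features of $S$. Writing $\mathrm{Br}(k) = \varinjlim_{\mathbb{Q}_p \subseteq E \subseteq k} \mathrm{Br}(E)$, with transition maps given by restriction, that is, by multiplication by the (prime to $p$) local degrees, one finds $\mathrm{Br}(k) \cong \mathbb{Q}_p/\mathbb{Z}_p$; hence $H^2(S,\mathbb{F}_p) \cong \mathrm{Br}(k)[p]$ is one-dimensional and $\mathrm{cd}_p(S) = 2$. Since $\zeta_p \in k$ (the degree $[\mathbb{Q}_p(\zeta_p):\mathbb{Q}_p] = p-1$ being prime to $p$), Kummer theory identifies $H^1(S,\mathbb{F}_p)$ with $k^{\times}/(k^{\times})^p$ and the cup product with the Hilbert symbol, and the latter is non-degenerate because every cyclic extension $L/k$ of degree $p$ satisfies $[k^{\times}:N_{L/k}L^{\times}] = |\mathrm{Br}(L/k)| = p$. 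The same computation applies to every open subgroup of $S$; thus $S$, and each of its open subgroups, is a non-degenerate one-relator pro-$p$ group on countably many generators, carrying the cyclotomic dualizing module $(\mathbb{Q}_p/\mathbb{Z}_p)(1)$ inherited from local Tate duality.

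\emph{Producing the retraction.} Let $K \le S$ be a closed topologically finitely generated subgroup. Since open subgroups of $S$ are not finitely generated, $K$ has infinite index in $S$, and one may, arguing as in the Demushkin case, first reduce to the situation in which $K$ is a free pro-$p$ group of finite rank (this reduction is convenient but not indispensable). I would then rerun the proof of \thmref{FirstRes} with $S$ in place of $G$: there the open subgroup $U \supseteq K$ and the continuous retraction $\tau \colon U \to K$ are built from the dualizing module together with the symplectic form on first cohomology, and the only facts used about the ambient group are those assembled above, all of which $S$ shares with a Demushkin group of rank $>2$ (the role of ``rank $>2$'' being played by ``rank $\aleph_0$''). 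The main obstacle is to verify that the finite generation of the ambient group, which in the proof of \thmref{FirstRes} underlies the finiteness of various cohomology groups and the Nakayama-type arguments over the completed group algebra, can be dispensed with; the remedy is to carry out those steps after passing to a suitably chosen open subgroup of $S$ containing $K$, so that all the finiteness actually invoked is the finiteness of $K$ rather than of $S$.
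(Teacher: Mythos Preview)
Your route is different from the paper's, and as written it has a gap.

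The paper does not attempt to rerun the proof of \thmref{FirstRes} inside the infinitely generated group $S$. Instead it invokes Labute's structure theorem from \cite{Lab0}, writing $S = \varprojlim_n G_n$ as an inverse limit of (finitely generated) Demushkin groups with $d(G_n) \to \infty$. For $n$ large the image of the given finitely generated $K$ in $G_n$ has infinite index (hence is free pro-$p$ by \eqref{DemRankEq}), and for $n$ larger still the projection $K \to G_n$ preserves the minimal number of generators and is therefore injective by the Hopfian property. One then applies \thmref{FirstRes} directly to the image of $K$ inside $G_n$ (which has $d(G_n) > 2$) and pulls the resulting retraction back along the surjection $S \to G_n$.

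Your proposed remedy for the finite-generation hypotheses in the proof of \thmref{FirstRes}---passing to an open subgroup of $S$ containing $K$---does not do what you need: every open subgroup of $S$ is again a rank-$\aleph_0$ Demushkin group, so $M_\kappa^U$ remains infinite-dimensional and \propref{CompProp}, \propref{LiftProp}, \propref{ApproxProp}, \propref{ExactProp}, and \corref{IsoCor} do not apply as stated. It may well be possible to adapt those arguments (the isotropic subspace $N$ is finite-dimensional since $K$ is finitely generated, and \cite{Lab0} does supply explicit presentations in the rank-$\aleph_0$ case), but that is substantial additional work your proposal does not carry out, and your sentence ``all the finiteness actually invoked is the finiteness of $K$'' is precisely the point that would need proof. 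The paper's inverse-limit trick sidesteps all of this by descending to a genuinely finite-rank \emph{quotient} of $S$ rather than to an open subgroup.
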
 

This is proved using \thmref{FirstRes} and the structure theorem of such Sylow subgroups from \cite{Lab0}.

As shown by Lubotzky in \cite{Lub} and Ribes in \cite{Rib},
a property stronger than virtual retraction holds in free pro-$p$ groups.
Namely, every finitely generated subgroup is virtually a free factor.
Groups with the latter property are called M. Hall groups.
Motivated by the discrete analog considered in \cite{Bog1, Bog2, BB, Tret, Tret2} and in other works, 
Ribes has shown in \cite{Rib} that being M. Hall is preserved under forming finitely generated free pro-$p$ products,
and conjectured in \cite[Conjecture 5.3]{Rib} that his finite generation assumption is superfluous.
Herein, we confirm his conjecture, thus obtaining a pro-$p$ analog of \cite[Theorem 1.1]{Bur00}.

\begin{thm} \label{PresMHThm}

The M. Hall property is preserved under free pro-$p$ products.

\end{thm}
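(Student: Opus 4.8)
We sketch how to prove this by adapting Ribes's argument for the finitely generated case, the main point being to dispense with his hypothesis that the free factors are finitely generated, which forces one to work with the possibly infinite quotient graph of the relevant pro-$p$ tree rather than a finite one. Two preliminary points. Since a topologically finitely generated closed subgroup of a free pro-$p$ product involves only finitely many of the factors, hence lies in the corresponding finite sub-product, which is itself a free factor of the whole, and since a free factor of a free factor of an open subgroup is again a free factor of that open subgroup, a routine approximation argument lets us assume that $G = G_1 \amalg G_2$ with $G_1, G_2$ M.\ Hall. We also record that the M.\ Hall property passes to open subgroups: if $H$ is open in an M.\ Hall group $G$ and $K \leq H$ is finitely generated, realize $K$ as a free factor of an open subgroup $U \leq G$ and apply the Kurosh subgroup theorem to $U \cap H \leq U$; the factor corresponding to the trivial double coset is $K$ itself, so $K$ is a free factor of the open subgroup $U \cap H$ of $H$.

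Let $K$ be a topologically finitely generated closed subgroup of $G = G_1 \amalg G_2$. The splitting yields a pro-$p$ tree $T$ on which $G$ acts with trivial edge stabilizers and with vertex stabilizers the conjugates of $G_1$ and $G_2$. By the pro-$p$ Kurosh subgroup theorem, in the form available for finitely generated subgroups, there is a decomposition $K = E \amalg K_1 \amalg \cdots \amalg K_m$, realized inside $G$, in which $E$ is a free pro-$p$ group of finite rank — the topological fundamental group of the quotient graph $T/K$ — and each $K_j$ is a finitely generated closed subgroup of some conjugate $G_{c(j)}^{g_j}$, with $c(j) \in \{1,2\}$. Since each such conjugate is again M.\ Hall, $K_j$ is a free factor of an open subgroup $W_j$ of $G_{c(j)}^{g_j}$; write $W_j = K_j \amalg M_j$. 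Putting each $W_j$ back at the vertex of $T$ that carried $K_j$, the tree-theoretic argument underlying the Kurosh decomposition shows that $\widetilde K \defeq E \amalg W_1 \amalg \cdots \amalg W_m$ is again a genuine free pro-$p$ decomposition inside $G$. Rearranging the factors, $\widetilde K = K \amalg M_1 \amalg \cdots \amalg M_m$, so $K$ is a free factor of $\widetilde K$; by transitivity it now suffices to show that $\widetilde K$ is a free factor of some open subgroup of $G$.

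For this, note that $\widetilde K$ is the fundamental group of a connected graph of groups with trivial edge groups, whose underlying graph carries the free part $E$ and whose vertex groups are the $W_j$. As the edge groups are trivial, whenever such a graph of groups occurs as a connected subgraph-of-groups of another one, the fundamental group of the former is a free factor of that of the latter. Hence it is enough to build an open subgroup $U$ of $G$ whose Kurosh decomposition, read off from the finite quotient graph $T/U$, contains this data as a connected subgraph-of-groups; concretely, $U$ should contain $K$, satisfy $U \cap G_{c(j)}^{g_j} = W_j$ for every $j$ with these intersections occurring at distinct vertices of $T/U$, and contain $E$ as a free factor of its free part. One constructs $U$ as a preimage under a carefully chosen finite $p$-quotient $\phi \colon G \to Q$: since a continuous homomorphism out of a free pro-$p$ product is prescribed freely and independently on the factors, $\phi$ can be arranged so that the kernel of its restriction to $G_i$ is small enough to resolve all of the $W_j$ lying in conjugates of $G_i$; passing to a smaller open subgroup then separates the finitely many relevant vertices of $T$ into distinct orbits, and the free part $E$ is accommodated because the graph of groups of $\widetilde K$ maps onto a connected subgraph of the finite graph $T/U$.

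The crux of the argument — and the point at which Ribes's finite-generation hypothesis made matters transparent — is the simultaneous realization of all the equalities $U \cap G_{c(j)}^{g_j} = W_j$, together with the control of the free part $E$, while keeping $[G:U]$ finite; in the absence of finite generation the graph $T/K$ need not be finite, and this bookkeeping has to be carried out in the profinite category. We note that \thmref{PresVRThm}, together with the observation that the M.\ Hall property implies the virtual retraction property, already shows that $G$ itself has the virtual retraction property; the substance of the present statement lies in upgrading ``retract'' to ``free factor''. Once $\widetilde K$, and hence $K$, is exhibited as a free factor of an open subgroup of $G$, the proof is complete.
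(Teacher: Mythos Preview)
Your sketch follows the outline of Ribes's argument and correctly isolates where his finite-generation hypothesis entered, but the paragraph you flag as ``the crux'' is exactly where the proof is missing rather than carried out. You assert that a finite $p$-quotient $\phi$ can be chosen so that the open preimage $U$ has $U\cap G_{c(j)}^{g_j}=W_j$ for each $j$ \emph{and} so that $E$ sits as a free factor of the free part of the Kurosh decomposition of $U$; neither is proved, and the second is the real obstacle. The free complement $F_U$ in a Kurosh decomposition is only determined up to a change of section, and the statement that ``the graph of groups of $\widetilde K$ maps onto a connected subgraph of the finite graph $T/U$'' yields at best that the \emph{image} of $E$ under the projection $U\to U/\langle U\cap\bigcup_{i,g}G_i^g\rangle\cong F_U$ is a free factor there, not that the subgroup $E\leq U$ itself occurs as a free factor in some Kurosh decomposition of $U$. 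Your earlier claim that $E\amalg W_1\amalg\cdots\amalg W_m$ is a genuine free pro-$p$ product inside $G$ is likewise asserted on the strength of an unspecified ``tree-theoretic argument'' and would also need justification.

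The paper handles the free part by a different mechanism. Writing $\tilde K$ for the subgroup of $K$ generated by all $K\cap G_i^g$, it shows $K/\tilde K\cong F_K$ functorially and proves that $\tilde H=\bigcap_{H\leq U\leq_o V}\tilde U$, whence $F_H\cong\varprojlim_U F_U$. The Hopfian property together with the M.\ Hall property of free pro-$p$ groups then produces an open $N\leq V$ for which the projection $\eta_N\colon F_H\to F_N$ is injective with image a free factor, say $F_N=\eta_N(F_H)\amalg Q$. The decisive step is \lemref{ProdRepLem}, proved via Mel'nikov's homological criterion for free pro-$p$ products: the subgroup $F_0\defeq\langle F_H\cup Q\rangle\leq N$ still projects isomorphically onto $N/\tilde N$, so one may \emph{replace} $F_N$ by $F_0$ in the Kurosh decomposition of $N$. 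After this swap $F_H$ itself---not merely an isomorphic image in a quotient---is a free factor of the free part of $N$, while each vertex factor $G_i^{g_{i,j}}\cap H$ is already a free factor of the corresponding $G_i^{g_{i,j}}\cap N$ by the choice of $V$; hence $H$ is a free factor of $N$. This replacement lemma is precisely the ingredient your sketch lacks.
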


In \cite[Conjecture 5.2]{Rib}, Ribes suggests a converse to \thmref{PresMHThm},
saying that all the finitely generated pro-$p$ M. Hall groups are given by free pro-$p$ products of the `basic' M. Hall groups.
We prove this conjecture as well. 

\begin{thm} \label{RibThm}

Every finitely generated pro-$p$ M. Hall group is a free pro-$p$ product of finite $p$-groups and procyclic pro-$p$ groups.

\end{thm}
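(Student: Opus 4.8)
The plan is to peel $G$ apart with the pro-$p$ analogue of Grushko's theorem, reducing to the freely indecomposable case, in which the M.\ Hall property turns out to be strong enough to force $G$ to be finite or procyclic.

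First I would record two permanence facts. If $G$ is M.\ Hall and $G_1$ is a closed free factor of $G$, then for a topologically finitely generated $K \leq G_1$ we choose an open $U \leq G$ with $U = K \amalg V$ and intersect with $G_1$: by the pro-$p$ Kurosh subgroup theorem the summand of $U \cap G_1$ indexed by the trivial double coset and the factor $K$ is exactly $K$, so $K$ is a free factor of the open subgroup $U \cap G_1$ of $G_1$. Thus closed free factors of M.\ Hall groups are M.\ Hall, and the same intersection argument shows that open subgroups of M.\ Hall groups are M.\ Hall. Now write the given finitely generated pro-$p$ M.\ Hall group $G$ as a free pro-$p$ product $G = G_1 \amalg \cdots \amalg G_n$ of freely indecomposable factors; such a decomposition exists and the splitting process terminates because the minimal number of generators is additive over free pro-$p$ products, and any free pro-$p$ factor may be written as a free product of copies of $\mathbb{Z}_p$. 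Each $G_i$ is a finitely generated, freely indecomposable pro-$p$ M.\ Hall group, so it suffices to prove that such a group is a finite $p$-group or is isomorphic to $\mathbb{Z}_p$.

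Fix such a $G$. The crux is to show that $G$ is virtually free pro-$p$, and this is where the M.\ Hall hypothesis does the real work: every finite subgroup $A$ of $G$ is a free factor of an open subgroup, so using the description of finite subgroups of a free pro-$p$ product (they lie in conjugates of the factors) one controls the finitely many conjugacy classes of finite subgroups of $G$ and extracts an open subgroup $N$ meeting every finite subgroup trivially; a cohomological dimension count, together with the fact that a pro-$p$ group of cohomological dimension at most one is free pro-$p$, then identifies $N$ as free pro-$p$. (If $G$ is finite we are already done.) Granting virtual freeness, the pro-$p$ analogue of the Karrass--Pietrowski--Solitar theorem presents $G$ as the fundamental pro-$p$ group of a reduced finite graph of finite $p$-groups. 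If this graph has no edges, $G$ is a finite $p$-group. If it has an edge with trivial edge group, then splitting off that edge exhibits $G$ as a nontrivial free pro-$p$ product --- contradicting free indecomposability --- unless the graph is a single loop at a vertex with trivial vertex group, i.e.\ $G \cong \mathbb{Z}_p$. In the remaining case every edge group $A$ is nontrivial; being finite, hence finitely generated, $A$ would by M.\ Hall be a free factor of an open subgroup $U = A \amalg B$, so $A$ is a maximal finite subgroup of $U$ fixing a unique vertex and no edge in the pro-$p$ Bass--Serre tree of this free product, while restricting the action of $G$ on the Bass--Serre tree of the graph of groups to $U$ and invoking the pro-$p$ Kurosh theorem locates the $U$-conjugates of $A$ among the vertex groups of the graph, each of the relevant ones properly containing $A$ by reducedness. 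Reconciling these two descriptions of the finite subgroups of $U$ forces a contradiction, so the graph has no edges after all, and $G$ is a finite $p$-group or $\mathbb{Z}_p$. Reassembling, $G = G_1 \amalg \cdots \amalg G_n$ is a free pro-$p$ product of finite $p$-groups and procyclic pro-$p$ groups.

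I expect the main obstacle to be the step establishing virtual freeness: extracting from the M.\ Hall property a genuine bound on the torsion of $G$ together with control of the cohomological dimension of an open subgroup. The elimination of nontrivial edge groups is the second delicate point, since pro-$p$ trees lack some of the rigidity of their discrete analogues, so the comparison of the two actions of $U$ must be made with care.
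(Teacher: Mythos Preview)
Your reduction to the freely indecomposable case is correct, but the argument for virtual freeness contains a genuine gap. You write that after extracting an open subgroup $N$ meeting every finite subgroup trivially, ``a cohomological dimension count'' gives $\mathrm{cd}_p(N) \leq 1$ and hence $N$ is free pro-$p$; but torsion-freeness does not bound cohomological dimension (consider $\mathbb{Z}_p^2$, which is torsion-free with $\mathrm{cd}_p = 2$), and nothing in the M.\ Hall hypothesis by itself supplies such a bound. You correctly flag this as the main obstacle, and it is: without an independent structural input there is no route from ``torsion-free open subgroup'' to ``free pro-$p$ open subgroup.'' There is also a subsidiary gap earlier in the same step---your sketch does not explain why an open subgroup avoiding all finite subgroups should exist, even granting a bound on their orders.

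The paper takes a different route that never establishes virtual freeness as a separate step. It first proves that a finitely generated freely indecomposable infinite pro-$p$ M.\ Hall group $G$ is \emph{torsion-free}: assuming a nontrivial maximal finite subgroup $Q$ (whose existence follows from the M.\ Hall property, essentially as in your sketch), one arranges for $Q$ to be a free factor of an open subgroup of index $p$ and then applies the Weigel--Zalesskii structure theorem \cite[Theorem~3.6]{WZ} to write $G$ as the fundamental group of a finite graph of groups with $Q$ as a vertex group and trivial incident edge group, contradicting indecomposability. With torsion-freeness in hand, one takes any nontrivial procyclic $K \leq G$, finds an open $U$ with $K$ as a free factor, and invokes \cite[Corollary~B]{WZ} (a finitely generated torsion-free pro-$p$ group that is virtually a nontrivial free pro-$p$ product is itself one) to force $U = K$; then $G$ is virtually $\mathbb{Z}_p$ and Serre's theorem gives $G \cong \mathbb{Z}_p$. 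So the graph-of-groups analysis you propose does appear, but only inside the torsion-freeness argument and powered by the two results from \cite{WZ}, which are precisely the missing ingredients in your outline.
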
 

Our proof relies on results and ideas from the recent work \cite{WZ} on the pro-$p$ analog of Stallings' decomposition theorem. In particular, we use \cite[Corollary B]{WZ}, saying that a finitely generated torsion-free pro-$p$ group which is virtually a free pro-$p$ product, is itself a free pro-$p$ product.
It should be noted that despite the attention devoted to the M. Hall property,
there is no result analogous to \thmref{RibThm} for discrete groups.

Let us now shift our attention to Howson's theorem from \cite{Hows},
saying that the intersection of a pair of finitely generated subgroups of a free group is finitely generated.
This theorem has been extended by Greenberg to surface groups in \cite{Green}, by Dahmani to limit groups in \cite{Da}, 
and to many other discrete groups (see for instance \cite{Baum, Bur, Bur2, Hem, Kap, KS, Para, Som}).
The pro-$p$ sibling of Howson's theorem has been obtained for free pro-$p$ groups by Lubotzky in \cite{Lub}.
More generally, we have the following.

\begin{prop} \label{HimpHprop}

Let $G$ be an M. Hall group, and let $H,K$ be finitely generated subgroups of $G$.
Then $H \cap K$ is finitely generated.

\end{prop}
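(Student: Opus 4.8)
The plan is to reduce to the free-factor situation provided by the M. Hall hypothesis and then invoke the known Howson property for free pro-$p$ products (or ultimately for free pro-$p$ groups). First I would apply the M. Hall property to $H$: there is an open subgroup $U \le G$ containing $H$ such that $H$ is a free pro-$p$ factor of $U$, say $U = H \amalg L$ for some closed subgroup $L$. Since $U$ is open, $H \cap K$ and $U \cap K$ differ by finite index — more precisely $H \cap K = H \cap (U \cap K)$ — so it suffices to show $U \cap K$ is finitely generated, because a closed subgroup of infinite index that is open in a finitely generated group is again finitely generated, and intersecting further with $H$ keeps finite generation (a closed subgroup of a finitely generated pro-$p$ group need not be finitely generated in general, but a subgroup of finite index in a finitely generated profinite group is finitely generated, so passing between $H \cap K$ and $U \cap K$ is harmless once one of them is known to be finitely generated; note $[U \cap K : H \cap K] \le [U : H] < \infty$). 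Thus, replacing $K$ by $U \cap K$ (still finitely generated, being of finite index in $K$), I may assume $G = U = H \amalg L$ and that $H$ is an honest free pro-$p$ factor.

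Next I would use the Kurosh subgroup theorem for free pro-$p$ products (Haran–Mel'nikov–Herfort–Ribes–Zalesskii) to analyze $K$ inside $H \amalg L$: the finitely generated subgroup $K$ decomposes as a free pro-$p$ product of a free pro-$p$ group with finitely many factors, each conjugate into $H$ or into $L$. Projecting along the retraction $G = H \amalg L \to H$ (which kills $L$), the subgroup $H \cap K$ maps isomorphically to its image, and one identifies $H \cap K$ with the intersection of $H$ with the $H$-conjugates appearing in the Kurosh decomposition of $K$. Since only finitely many such conjugates occur and each contributes a finitely generated piece, finite generation of $H \cap K$ follows. Alternatively — and this is probably cleaner — I would argue that being M. Hall is inherited by the relevant subgroups and that $H$, being a free pro-$p$ factor, is itself a (virtually trivially) M. Hall group, so that one is reduced to Howson's theorem inside $H$; then apply Lubotzky's pro-$p$ Howson theorem for free pro-$p$ groups if $H$ is free, or induct on the free-product length otherwise.

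The main obstacle I anticipate is the bookkeeping in the Kurosh decomposition: controlling which conjugates $H^g \cap K$ are nontrivial and bounding their number and rank, so as to get an \emph{explicit} generator count (the abstract finiteness is comparatively soft). For the qualitative statement in Proposition \ref{HimpHprop}, however, it should be enough to combine the M. Hall property of $G$ with the fact that Howson's theorem holds for free pro-$p$ groups (Lubotzky) together with stability of the Howson property under free pro-$p$ products — the latter being a consequence of the Kurosh theorem and close in spirit to \thmref{PresMHThm}. I would therefore structure the proof as: (1) pass to the open free-product overgroup $U$ of $H$; (2) reduce $K$ to $U \cap K$; (3) project to $H$ and identify $H \cap K$; (4) quote the Howson property for $H$. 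Step (4) is where the real content sits, but it is already available in the literature for free pro-$p$ groups and follows inductively for free products.
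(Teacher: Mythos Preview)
Your opening reduction is right in spirit but contains a real error: you write $[U\cap K : H\cap K]\le [U:H]<\infty$, yet $H$ is a \emph{free factor} of $U$, so $[U:H]$ is infinite whenever $L\neq\{1\}$. This index bound is false and cannot be used to pass finite generation from $U\cap K$ down to $H\cap K$.

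Fortunately the fix is that no such passage is needed, and this is exactly what the paper does. Once $U=H\amalg L$ with $H\le U$, apply the Kurosh subgroup theorem directly to the finitely generated subgroup $K\cap U$ of $U$: the factor corresponding to the trivial $H$--$(K\cap U)$ double coset is precisely $H\cap(K\cap U)=H\cap K$, and it appears as a free pro-$p$ factor of $K\cap U$. A free factor of a finitely generated pro-$p$ group is finitely generated, and the proof is over. There is no need for the retraction/projection argument, no appeal to Lubotzky's Howson theorem for free pro-$p$ groups, no induction on free-product length, and no use of \thmref{BaumThm}-type stability results. Your ``alternative'' route (reduce to Howson inside $H$) does not make sense as stated, since $K$ is not a subgroup of $H$; but it is also simply unnecessary.

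So: your Step~(1)--(2) and the invocation of Kurosh match the paper, but the argument should stop immediately after observing that $H\cap K$ is itself one of the Kurosh free factors of $K\cap U$. Everything past that point in your proposal is either redundant or (in the case of the index inequality) incorrect.
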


\begin{proof}

There exists a finite index subgroup $U$ of $G$ containing $H$ as a free factor.
By the Kurosh subgroup theorem, $K \cap U \cap H$ is a free factor of $K \cap U$, so it is finitely generated.
Since $K \cap U \cap H$ is of finite index in $K \cap H$ we conclude that the latter is finitely generated, as required.
\end{proof}

\propref{HimpHprop} is valid for both discrete and pro-$p$ groups,
thus enlarging the family of groups satisfying Howson's theorem.
The proof of \propref{HimpHprop} is a substantially simplified version of the original arguments by Howson and Lubotzky.
Yet, this method for proving Howson's theorem fails for Demushkin groups (since these are not M. Hall as shown by \thmref{RibThm}),
so we obtain the result using a different method.

\begin{thm} \label{SecRes}

Let $A,B$ be closed topologically finitely generated subgroups of a Demushkin group.
Then $A \cap B$ is topologically finitely generated.

\end{thm}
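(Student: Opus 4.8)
The plan is to split into cases according to the indices of $A$ and $B$, reducing each either to an elementary finite-index computation, to Lubotzky's pro-$p$ Howson theorem for free pro-$p$ groups, or to a single cohomological estimate inside one Demushkin group to which \thmref{FirstRes} applies. First I would dispose of small rank: if the ambient Demushkin group $G$ has rank at most $2$ then it is $p$-adic analytic of dimension $2$, hence Noetherian, so \emph{every} closed subgroup, in particular $A\cap B$, is topologically finitely generated, with a bound depending only on $\dim G$. So assume $G$ has rank greater than $2$. If $A$ (say) is open in $G$, then $[B:A\cap B]=[AB:A]\le[G:A]<\infty$, so $A\cap B$ has finite index in the finitely generated group $B$, and the Nielsen--Schreier inequality for pro-$p$ groups gives a generating bound. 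Hence we may assume both $A$ and $B$ have infinite index; by the structure theory of Demushkin groups a closed subgroup of infinite index has cohomological dimension at most $1$, so $A$ and $B$ are finitely generated free pro-$p$ groups, of ranks $a$ and $b$ say.

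Next I would consider $H:=\overline{\langle A,B\rangle}$, a finitely generated closed subgroup of $G$. If $H$ has infinite index in $G$, then $H$ is free pro-$p$ and $A\cap B$ is an intersection of two finitely generated subgroups of a free pro-$p$ group, hence finitely generated by Lubotzky's theorem (the free pro-$p$ case of \propref{HimpHprop}), with a Hanna--Neumann-type bound. So we may assume $H$ is open in $G$, and, replacing $G$ by $H$ (still a Demushkin group; if its rank drops to at most $2$ we are back in the already treated case), we may assume that $\langle A,B\rangle$ is dense in $G$.

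It remains to treat the crucial case: $A,B$ finitely generated free pro-$p$ of infinite index, with $\overline{\langle A,B\rangle}=G$. Here I would invoke \thmref{FirstRes} to pick an open $U\le G$ with $A\le U$ and a continuous retraction $\tau\colon U\to A$, so $U=A\ltimes N$ with $N=\Ker\tau$ free pro-$p$; note $U$ is again a Demushkin group, so $\dim_{\mathbb{F}_p}H^2(U,\mathbb{F}_p)=1$. Put $B_0:=B\cap U$, a finitely generated free pro-$p$ group whose rank is bounded in terms of $[G:U]$ and $b$ by Nielsen--Schreier. A direct check gives $A\cap B=C\cap B$, where $C:=\tau(B_0)\le A$ is again finitely generated free pro-$p$: any $x\in A\cap B$ lies in $U$, hence is fixed by $\tau$, and lies in $B_0$, hence $x=\tau(x)\in C$. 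Thus the problem reduces to bounding the rank of the free pro-$p$ group $A\cap B$, i.e.\ $\dim_{\mathbb{F}_p}H^1(A\cap B,\mathbb{F}_p)$, inside the Demushkin group $U$. For this I would set up a Mayer--Vietoris-type exact sequence governed by the Poincar\'e-duality structure of $U$: using the dualizing module $I_U$ and the coset space $U/A\cong N$, on which $B_0$ acts through $\tau$ together with left translation, one relates $H^1(A\cap B,\mathbb{F}_p)$ to $H^1(A,\mathbb{F}_p)$, $H^1(B_0,\mathbb{F}_p)$ and $H^2(U,\mathbb{F}_p)\cong\mathbb{F}_p$, obtaining a bound of the shape $\mathrm{rank}(A\cap B)\le a+\mathrm{rank}(B_0)+1$, and hence finiteness together with an explicit estimate after inserting the Nielsen--Schreier bound for $\mathrm{rank}(B_0)$ and tracking the indices accumulated in the earlier reductions.

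The main obstacle is precisely this last step. A Demushkin group is not a free or HNN amalgam of $A$ with a complement, so there is no off-the-shelf Mayer--Vietoris sequence, and the complement $N$ of $A$ in $U$ need not be finitely generated, so one cannot argue by passing to a finite-index overgroup of $A\cap B$. Overcoming this is where the $\mathrm{PD}^2$ structure of $U$, its dualizing module, and the theory of symplectic forms over local rings developed earlier in the paper are essential: $A\cap B$ must be interpreted as a point stabilizer for the action of $B_0$ on $U/A$, and one controls the relevant (co)homology with coefficients in the permutation module $\mathbb{F}_p[[U/A]]$, the duality $H^i(U,-)\cong H_{2-i}(U,-\otimes I_U)$ playing the role of the "compact core'' argument that is available for surface groups but not, a priori, in the pro-$p$ world. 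Once this estimate is in place, assembling the explicit generating bound for $A\cap B$ is a bookkeeping exercise through the case reductions above.
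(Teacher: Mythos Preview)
Your initial reductions (rank $\le 2$, one subgroup open, $\langle A,B\rangle$ of infinite index) are correct and match the paper. The gap is exactly where you say it is: the ``crucial case'' where $\overline{\langle A,B\rangle}$ is open. Your proposed attack via \thmref{FirstRes} does not make progress. The reduction $A\cap B=C\cap B_0$ with $C=\tau(B_0)$ just replaces one intersection of two finitely generated free pro-$p$ subgroups of a Demushkin group by another; you are back where you started. The Mayer--Vietoris/$\mathrm{PD}^2$ argument you sketch is not a known mechanism, and you yourself flag it as an obstacle rather than a proof. There is no reason to expect a bound of the shape $\mathrm{rank}(A\cap B)\le a+\mathrm{rank}(B_0)+1$ from the data you have assembled.

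The paper's proof is entirely different and does \emph{not} use \thmref{FirstRes}, the dualizing module, or the symplectic-form machinery at all. The key idea you are missing is this: instead of confronting the case $\overline{\langle A,B\rangle}=G$ head-on, one \emph{avoids} it by passing to finite-index subgroups of $A$ and $B$ whose intersection is unchanged but which together generate a subgroup of infinite index. Concretely, choose descending chains $\{U_k\}$, $\{V_k\}$ of open subgroups (each step of index $p$) with $\bigcap U_k=A$ and $\bigcap V_k=B$, and set $A_0\defeq A\cap V_n$, $B_0\defeq B\cap U_n$ for a suitable $n$. Then $A_0\cap B_0=A\cap B$, and a short computation using Schreier's bound and the Demushkin rank formula \eqref{DemRankEq} shows that $d(\langle A_0,B_0\rangle)<(d(G)-2)[G:U_n\cap V_n]+2$, forcing $\langle A_0,B_0\rangle$ to have infinite index and hence to be free pro-$p$. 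Now Jaikin-Zapirain's pro-$p$ Hanna Neumann theorem finishes the job and yields the explicit bound \eqref{HowsGoalEq}. This is a rank-gradient counting argument, not a duality argument.
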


This is the first extension of Howson's theorem to pro-$p$ groups that are not free.
Our proof of \thmref{SecRes}, which relies on ideas from \cite{Sh}, does not merely show that the intersection is finitely generated,
but also gives a bound on the number of generators of $A \cap B$ that depends only on the numbers of generators of $A$ and $B$.
Such bounds are at the focus of the Hanna Neumann conjecture,
that was studied in a plethora of works (see for example \cite{Bur0, Di, DiF, E, Ger, Nh, Ni, T, T2})
until it was finally resolved by Friedman and Mineyev in \cite{M, M2} and \cite{F} respectively.
Analogs of the conjecture for other discrete groups are considered in several works such as \cite{Bur3, DiI, I, I2, Som0, Som1, Z}.
Giving any bound on the number of generators of the intersection in the free pro-$p$ case was an open problem for more than 30 years (Lubotzky's argument does not provide a bound), until a breakthrough was made by Jaikin-Zapirain in \cite{J}, leading also to a new proof of the original Hanna Neumann conjecture.
In \eqref{HowsGoalEq} we obtain such a bound for Demushkin groups, thus establishing \thmref{SecRes}.

Our arguments from the proof of \thmref{SecRes} generalize to other pro-$p$ groups.
Furthermore, they can be used to reprove Howson's theorem for surface groups, 
and to obtain bounds on the number of generators of the intersection that are better than those previously known (see  \cite{Bur3, Som0, Som1}) in certain special cases (see Section \ref{HowsSec}).
 
We also establish the pro-$p$ analogue of Baumslag's main result from \cite{Baum}.

\begin{thm} \label{BaumThm}

If Howson's theorem holds for pro-$p$ groups $G,H$ then it also holds for their free pro-$p$ product $G \amalg H$.

\end{thm}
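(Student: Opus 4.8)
The plan is to prove the pro-$p$ analogue of Baumslag's theorem \cite{Baum} by the Bass--Serre / Stallings method of fibre products of graphs of groups. Write $D \defeq G \amalg H$ and let $T$ be the standard pro-$p$ tree of this free pro-$p$ product, so that $D \backslash T$ is a single edge, the two vertex stabilizers are $G$ and $H$, and every edge stabilizer of the $D$-action on $T$ is trivial. Let $A,B$ be topologically finitely generated closed subgroups of $D$; we must show $A \cap B$ is topologically finitely generated. By the theory of finitely generated pro-$p$ groups acting on pro-$p$ trees (as developed in \cite{WZ} and the references given there), the restriction of the action to $A$ admits a minimal invariant pro-$p$ subtree $T_A \subseteq T$ with $A \backslash T_A$ a \emph{finite} graph; equivalently, $A$ is the fundamental pro-$p$ group of a finite graph of pro-$p$ groups $(\mathcal{A},\Gamma_A)$ with trivial edge groups and with finitely generated vertex groups, each of the form $A \cap G^{x}$ or $A \cap H^{y}$ (a finitely generated pro-$p$ Kurosh decomposition). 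The analogous decomposition $(\mathcal{B},\Gamma_B)$, arising from a minimal subtree $T_B \subseteq T$, holds for $B$.

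I would then form the pullback, or fibre product, of the two finite graphs of pro-$p$ groups over $(\mathcal{D},D\backslash T)$ --- the pro-$p$ version of Bass's construction for the covering theory of graphs of groups. Its underlying graph is contained in $\Gamma_A \times \Gamma_B$, hence finite; its edge groups are trivial; and the fundamental pro-$p$ group of the connected component containing the base vertex is $A \cap B$. The vertex group of this pullback at a vertex lying over the vertex of type $G$ is an intersection of the form $(A \cap G^{x}) \cap (B \cap G^{x})$ inside $G^{x}$. A standard argument with fixed subtrees in pro-$p$ trees --- using crucially that distinct vertex stabilizers of the $D$-action on $T$ intersect trivially, as $D$ is a free pro-$p$ product --- shows that whenever such an intersection is nontrivial the relevant vertex already lies in both $T_A$ and $T_B$, so that $A \cap G^{x}$ and $B \cap G^{x}$ occur among the finitely generated vertex groups of $(\mathcal{A},\Gamma_A)$ and $(\mathcal{B},\Gamma_B)$. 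Since Howson's theorem holds for $G$ (hence for $G^{x} \cong G$), the intersection $(A \cap G^{x}) \cap (B \cap G^{x})$ is finitely generated; the same conclusion at vertices of type $H$ uses Howson for $H$. Thus all vertex groups of the pullback are finitely generated.

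It then remains to observe that the base component of the pullback is carried by a \emph{finite} graph of pro-$p$ groups: its core projects, under the two immersions to $(\mathcal{A},\Gamma_A)$ and $(\mathcal{B},\Gamma_B)$, into the finite graphs $\Gamma_A$ and $\Gamma_B$ (a reduced loop in the pullback maps to a reduced path in each factor), so it is contained in the finite graph $\Gamma_A \times \Gamma_B$. Hence $A \cap B$ is the fundamental pro-$p$ group of a finite graph of finitely generated pro-$p$ groups with trivial edge groups, and is therefore topologically finitely generated, as required. \textbf{The main obstacle is the second step:} one must set up enough of the covering theory of graphs of pro-$p$ groups --- immersions, the pullback and its a priori disconnected total object, the identification of the fundamental pro-$p$ group of the base component with $A \cap B$, and the preservation of reduced paths under immersions so that cores behave well --- in the pro-$p$ category, where connectedness and limits of profinite graphs require care. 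Keeping track of the vertex groups through the construction also yields an explicit bound on the number of generators of $A \cap B$ in terms of those of $A$, those of $B$, and the ranks of the relevant intersections taken inside $G$ and $H$.
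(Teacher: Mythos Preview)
Your approach is genuinely different from the paper's. The paper never touches pro-$p$ trees or fibre products for this theorem; instead it first reduces (\corref{SubProdCor}) to the case where one of the two subgroups has the form $H = \coprod_i H_i$ with each $H_i$ lying inside a single free factor of an open subgroup, and then (\thmref{H fitted}) proves that $H_1(K \cap H, \mathbb{F}_p)$ is finite by a purely homological argument: the Mayer--Vietoris long exact sequences attached to the two free-product decompositions are compared via a commutative square, and a key lemma (\lemref{finiteness}) translates finiteness of $H_1$ with coefficients in $\llbracket \mathbb{F}_p(H/H_i) \rrbracket$ into finiteness of the set of $(K\cap H)$-orbits on $H/H_i$ with non-free action. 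The Howson hypothesis on the factors enters only at the very end, to guarantee that the stabilizers of those finitely many orbits are themselves finitely generated.

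The obstacle you name is precisely why the paper takes the homological route. A covering theory for graphs of pro-$p$ groups --- with immersions, a pullback construction, and the identification of $\pi_1$ of the base component of the pullback with $A \cap B$ --- is not available off the shelf; the last step in particular, which is automatic in the discrete setting via ordinary covering-space theory, has no established pro-$p$ analogue, and connectedness and ``reduced path'' arguments in profinite graphs require genuine care. What the paper's approach buys is a complete proof using only standard profinite homological machinery (Shapiro's lemma, long exact sequences, and the module-theoretic description of free pro-$p$ products from \cite{RZ2, Mel}); what your approach would buy, if the machinery could be built, is the transparent geometric picture and the explicit bound on $d(A \cap B)$ that you mention at the end.
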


Using Efrat's result from \cite{Ef} that describes finitely generated pro-$p$ subgroups of absolute Galois groups of global fields (of characteristic $\neq p$) as free pro-$p$ products of subgroups of Demushkin groups, 
we immediately arrive at the following corollary of \thmref{SecRes} and \thmref{BaumThm}.

\begin{cor}

A $p$-Sylow subgroup of the absolute Galois group of a global field of characteristic different from $p$ satisfies Howson's theorem.

\end{cor}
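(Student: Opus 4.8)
The plan is to derive this from \thmref{SecRes} and \thmref{BaumThm} together with Efrat's structure theorem. Let $F$ be a global field with $\mathrm{char}(F) \neq p$, let $G_F$ denote its absolute Galois group, and let $S$ be a $p$-Sylow subgroup of $G_F$, so that $S$ is pro-$p$. Given topologically finitely generated closed subgroups $H$ and $K$ of $S$, set $L \defeq \overline{\langle H,K\rangle} \leq S$. Then $L$ is a topologically finitely generated pro-$p$ group (it is topologically generated by the union of finite generating sets of $H$ and $K$), and $H \cap K$ is a closed subgroup of $L$; hence it suffices to prove that Howson's theorem holds for $L$.

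By Efrat's theorem from \cite{Ef}, the topologically finitely generated pro-$p$ subgroup $L$ of $G_F$ admits a decomposition as a free pro-$p$ product $L \cong D_1 \amalg \cdots \amalg D_n$ with $n$ finite (the finiteness of $n$ being forced by the finite generation of $L$), where each $D_i$ is a closed subgroup of some Demushkin group $\Delta_i$. By \thmref{SecRes}, Howson's theorem holds for $\Delta_i$; since any pair of topologically finitely generated closed subgroups of $D_i$ is, in particular, a pair of topologically finitely generated closed subgroups of $\Delta_i$, it follows that Howson's theorem holds for $D_i$ as well, for every $i$.

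It then remains to pass from the factors to the free pro-$p$ product. Writing $D_1 \amalg \cdots \amalg D_n = D_1 \amalg (D_2 \amalg \cdots \amalg D_n)$ and inducting on $n$ with \thmref{BaumThm}, we conclude that Howson's theorem holds for $L$, and therefore $H \cap K$ is topologically finitely generated, as claimed. Since the argument is a direct assembly of results already established in the paper, I do not expect any essential obstacle; the only points deserving a word of justification are that Howson's theorem descends from a Demushkin group to each of its closed subgroups, and that the two-factor statement \thmref{BaumThm} iterates to free pro-$p$ products of finitely many factors.
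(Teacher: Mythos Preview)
Your proof is correct and follows exactly the route the paper indicates: the paper states the corollary as an immediate consequence of \thmref{SecRes}, \thmref{BaumThm}, and Efrat's description of finitely generated pro-$p$ subgroups of $G_F$ as free pro-$p$ products of subgroups of Demushkin groups, and you have simply written out those details. The two points you flag (descent of Howson's property to closed subgroups of a Demushkin group, and iteration of \thmref{BaumThm} to finitely many factors) are indeed routine and exactly what the paper leaves implicit.
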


For more studies of the Sylow subgroups of absolute Galois groups (motivated by a question of Serre) see for instance \cite{BJN, BLMS, Lab0}.

\section{Virtual retraction in Demushkin groups}

Some parts of this section are based on arguments from \cite{Son}.
In order to make these arguments suitable for our needs,
they are presented here in a detailed and generalized form. 

\subsection{Symplectic forms}

Let $R$ be a (commutative unital) local ring, 
let $\mathfrak{m}$ be its unique maximal ideal,
let $\kappa = R/\mathfrak{m}$ be its residue field,
and let $M$ be a nontrivial finitely generated free $R$-module.
A bilinear form $\omega \colon M \times M \to R$ is called symplectic if it is skew-symmetric and nondegenerate.
That is,
\begin{itemize}

\item For all $a,b \in M$ we have $\omega(a,b) = -\omega(b,a)$.

\item For some (equivalently, every) basis $c_1, \dots, c_n$ of $M$ over $R$ we have $\det\big(\omega(c_i,c_j)\big) \in R^*$.

\end{itemize}
Evidently, $\omega$ induces a symplectic bilinear form on the $\kappa$-vector space
\begin{equation}
M_\kappa \defeq \kappa \otimes_R M \cong M/\mathfrak{m}M.
\end{equation}
We assume throughout that the rank $n$ of $M$ over $R$ (which equals $\dim_{\kappa} M_\kappa$) is even and positive.
For $t \defeq \frac{n}{2}$, 
we say that a sequence $(a_1, b_1, \dots, a_t, b_t)$ of elements of $M$ is a symplectic basis for $M$,
if it is a basis of $M$ over $R$ and if 
\begin{equation} \label{DefSymBasEq}
\begin{split}
&\forall \ 1 \leq i \leq t \quad \omega(a_i,b_i) = 1, \\
&\forall \ 1 \leq i \neq j \leq t \quad \omega(a_i, b_j) = \omega(a_i, a_j) = \omega(b_i, b_j) = 0.
\end{split}
\end{equation}
We write $S_1 \frown S_2$ for the concatenation of the sequences $S_1$ and $S_2$.
The following observation allows us to construct symplectic bases.

\begin{prop} \label{ObsBaseProp}
For every even-dimensional nondegenerate subspace $L$ of $M_\kappa$,
the orthogonal complement $L^\bot$ of $L$ in $M_\kappa$ is nondegenerate,
and satisfies $L \oplus L^\bot = M_\kappa$.
Furthermore, if $B,C$ are symplectic bases for $L,L^\bot$ respectively,
then $B \frown C$ is a symplectic basis for $M_\kappa$.
\end{prop}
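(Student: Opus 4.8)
The plan is to prove \propref{ObsBaseProp} in two stages: first the linear-algebra decomposition $L \oplus L^\bot = M_\kappa$ together with the nondegeneracy of $L^\bot$, and then the claim that concatenating symplectic bases of $L$ and $L^\bot$ yields a symplectic basis of $M_\kappa$.

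For the first stage, I would work entirely over the field $\kappa$, so $M_\kappa$ is a finite-dimensional symplectic vector space. I would first show $L \cap L^\bot = \{0\}$: if $v \in L \cap L^\bot$, then $\omega(v, w) = 0$ for all $w \in L$, but since the restriction of $\omega$ to $L$ is nondegenerate by hypothesis, this forces $v = 0$. Next I would compute dimensions. The map $M_\kappa \to L^*$ sending $u \mapsto \omega(u, -)|_L$ is linear, and it is surjective because $\omega$ is nondegenerate on all of $M_\kappa$ (given any functional on $L$, extend it to $M_\kappa$ and represent it via $\omega$); its kernel is precisely $L^\bot$, so $\dim L^\bot = \dim M_\kappa - \dim L$. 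Combined with $L \cap L^\bot = \{0\}$, a dimension count gives $L \oplus L^\bot = M_\kappa$. Finally, nondegeneracy of $\omega$ restricted to $L^\bot$: if $v \in L^\bot$ is orthogonal to all of $L^\bot$, then since it is also orthogonal to $L$ (being in $L^\bot$) and $M_\kappa = L \oplus L^\bot$, it is orthogonal to all of $M_\kappa$, hence zero by nondegeneracy of $\omega$ on $M_\kappa$.

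For the second stage, suppose $B = (a_1, b_1, \dots, a_s, b_s)$ is a symplectic basis of $L$ and $C = (a_{s+1}, b_{s+1}, \dots, a_t, b_t)$ is a symplectic basis of $L^\bot$, where $2s = \dim L$ and $2(t-s) = \dim L^\bot$. Since $L \oplus L^\bot = M_\kappa$, the concatenation $B \frown C$ is a basis of $M_\kappa$ over $\kappa$. The relations in \eqref{DefSymBasEq} among vectors both coming from $B$ hold because $B$ is a symplectic basis of $L$, and likewise for vectors both from $C$; the mixed relations $\omega(a_i, a_j) = \omega(a_i, b_j) = \omega(b_i, b_j) = 0$ for $i \leq s < j$ hold because any element of $L$ is orthogonal to any element of $L^\bot$ by definition of the orthogonal complement. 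Hence $B \frown C$ satisfies \eqref{DefSymBasEq} and is a symplectic basis of $M_\kappa$.

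I do not anticipate a serious obstacle here; this is standard symplectic linear algebra over a field, and the main point requiring a little care is the surjectivity of $M_\kappa \to L^*$, which I would deduce cleanly from the nondegeneracy of $\omega$ on $M_\kappa$ rather than by an explicit basis computation. One should also note that the hypothesis that $L$ is \emph{even-dimensional} is not actually needed for the decomposition $L \oplus L^\bot = M_\kappa$ or for the nondegeneracy of $L^\bot$ — those follow from nondegeneracy of $\omega|_L$ alone — but it is needed for $L$ (and hence $L^\bot$) to admit a symplectic basis in the sense of \eqref{DefSymBasEq}, so it is natural to include it in the statement.
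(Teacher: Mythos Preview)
Your proposal is correct and follows essentially the same approach as the paper's proof. The only minor difference is that the paper deduces the nondegeneracy of $L^\bot$ from the determinant factorization $\det(\omega) = \det(\omega|_L)\det(\omega|_{L^\bot})$ (using the block form of the Gram matrix relative to the decomposition $L \oplus L^\bot$), whereas you argue directly that any element of $L^\bot$ orthogonal to all of $L^\bot$ lies in the radical of $\omega$ on $M_\kappa$ and is therefore zero.
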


\begin{proof}
The nondegeneracy of $\omega$ implies that $\dim_\kappa L^\bot = n - \dim_\kappa L$,
and the nondegeneracy of $L$ implies that $L \cap L^\bot = 0$, 
so we infer that $L \oplus L^\bot = M_\kappa$ and thus that $B \frown C$ is a symplectic basis for $M_\kappa$.
The nondegeneracy of $L^\bot$ follows from the fact that $\det(\omega) = \det(\omega|_L)\det(\omega|_{L^\bot})$.
\end{proof}

In order to be able to apply \propref{ObsBaseProp} we need the following.

\begin{prop} \label{TwoProp}
$M_\kappa$ has a $2$-dimensional nondegenerate subspace.
\end{prop}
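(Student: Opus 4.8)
The plan is to distinguish two cases according to whether $M_\kappa$ contains a nonzero isotropic vector, i.e.\ some $a \neq 0$ with $\omega(a,a) = 0$. First I would record the easy remark that when $\mathrm{char}\,\kappa \neq 2$ the skew-symmetry $\omega(a,a) = -\omega(a,a)$ already forces $\omega(a,a) = 0$ for every $a$; so in that case every nonzero vector is isotropic and only the first case occurs, while the second case is relevant only in characteristic $2$, where $\omega$ need not be alternating.

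In the first case I pick a nonzero isotropic $a$. Since $\omega$ is nondegenerate on $M_\kappa$, the functional $\omega(a,\cdot)$ is nonzero, so there is $v$ with $\omega(a,v) \neq 0$; rescaling, I set $b \defeq \omega(a,v)^{-1} v$, so that $\omega(a,b) = 1$. Then $a$ and $b$ are linearly independent — otherwise $b = \lambda a$ and $1 = \omega(a,b) = \lambda\,\omega(a,a) = 0$ — hence they span a $2$-dimensional subspace $W$. In the basis $(a,b)$ the determinant of $\omega|_W$ is $\omega(a,a)\,\omega(b,b) - \omega(a,b)\,\omega(b,a)$, which equals $-\omega(a,b)\,\omega(b,a) = -(1)(-1) = 1 \neq 0$ since $\omega(a,a) = 0$. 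Thus $W$ is the desired nondegenerate subspace (and in fact $(a,b)$ is a symplectic basis of $W$ once $\omega(b,b) = 0$, which is automatic away from characteristic $2$).

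In the second case every nonzero vector of $M_\kappa$ is anisotropic, and then I claim that the restriction of $\omega$ to \emph{every} subspace $W \subseteq M_\kappa$ is nondegenerate: otherwise the radical of $\omega|_W$ would contain a nonzero vector $w$, which is orthogonal to all of $W$ and in particular to itself, so $\omega(w,w) = 0$, contradicting the case hypothesis. Since $n \geq 2$ by assumption, any $2$-dimensional subspace of $M_\kappa$ then does the job. I expect the only delicate point to be the characteristic $2$ situation, and this is exactly what the dichotomy is designed to absorb: the real content is the observation that a degenerate restriction always supplies an isotropic vector in its radical, which in characteristic $2$ is a genuine constraint rather than a triviality.
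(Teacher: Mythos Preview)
Your proof is correct and follows the same dichotomy as the paper's: split on whether $M_\kappa$ contains a nonzero isotropic vector, and in the first case use nondegeneracy to find a partner with nonzero pairing. The only minor difference is in the second case: the paper picks specific nonzero $y,z$ with $\omega(y,z)=0$ (which exist since $y^\perp$ has dimension $n-1\geq 1$ and cannot contain $y$ in this case) and computes the determinant directly, whereas you observe more generally that \emph{every} subspace must be nondegenerate because a nonzero radical vector would be isotropic --- a slightly cleaner formulation of the same idea.
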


\begin{proof}
Suppose that there exists a $0 \neq u \in M_\kappa$ with $\omega(u,u) = 0$. 
Nondegeneracy gives us a $v \in M_\kappa$ with $\omega(u,v) \neq 0$ so we see that $\mathrm{Span}_\kappa \{u, v\}$ is nondegenerate.
Otherwise, 
taking any $y,z \in M_\kappa \setminus \{0\}$ with $\omega(y,z) = 0$ we get that $\mathrm{Span}_\kappa \{y, z\}$ is nondegenerate as $\omega(y,y), \omega(z,z) \neq 0$.
\end{proof}

The following proposition bares resemblance to \cite[Theorem 8.1]{Lan}.

\begin{prop} \label{SympBaseExProp}

There exists a symplectic basis for the vector space $M_\kappa$.

\end{prop}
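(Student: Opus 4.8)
The plan is to build a symplectic basis for $M_\kappa$ by induction on $t = n/2$, using \propref{TwoProp} to get started and \propref{ObsBaseProp} to carry the induction forward. First I would handle the base case $t = 1$: by \propref{TwoProp}, $M_\kappa$ itself is a $2$-dimensional nondegenerate space, so it suffices to exhibit a symplectic basis of a $2$-dimensional nondegenerate symplectic space. Pick any $0 \neq a_1 \in M_\kappa$; by nondegeneracy there is some $w$ with $\omega(a_1, w) = \lambda \neq 0$, and then $b_1 \defeq \lambda^{-1} w$ satisfies $\omega(a_1, b_1) = 1$. Since $\dim_\kappa M_\kappa = 2$ and $a_1, b_1$ are linearly independent (if $b_1 = \mu a_1$ then $1 = \omega(a_1,b_1) = \mu \omega(a_1,a_1) = 0$ by skew-symmetry, a contradiction — here one uses that in characteristic $2$ the symplectic condition is taken to include $\omega(x,x) = 0$, or otherwise $\mathrm{char}\,\kappa \neq 2$ and skew-symmetry gives $\omega(a_1,a_1)=0$ directly), they form a basis, and \eqref{DefSymBasEq} holds vacuously for the cross terms. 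Thus $(a_1, b_1)$ is a symplectic basis.

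For the inductive step, suppose $t \geq 2$ and that every nondegenerate symplectic space of dimension $2s < n$ (over $\kappa$, arising from such a form) admits a symplectic basis. Apply \propref{TwoProp} to obtain a $2$-dimensional nondegenerate subspace $L \subseteq M_\kappa$. By the base-case argument applied to $L$ (which is itself a nondegenerate symplectic space under the restricted form), $L$ has a symplectic basis $B = (a_1, b_1)$. By \propref{ObsBaseProp}, the orthogonal complement $L^\bot$ is nondegenerate and $L \oplus L^\bot = M_\kappa$, so $\dim_\kappa L^\bot = n - 2 = 2(t-1)$. The restriction of $\omega$ to $L^\bot$ is again skew-symmetric and nondegenerate, hence by the induction hypothesis $L^\bot$ has a symplectic basis $C = (a_2, b_2, \dots, a_t, b_t)$. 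The concluding clause of \propref{ObsBaseProp} then tells us that $B \frown C = (a_1, b_1, a_2, b_2, \dots, a_t, b_t)$ is a symplectic basis for $M_\kappa$, completing the induction. (Note that \propref{ObsBaseProp} as stated requires $L$ to be even-dimensional, which is exactly why \propref{TwoProp} is phrased for a $2$-dimensional subspace.)

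I do not expect a serious obstacle here, as the argument is the standard inductive construction of a symplectic basis; the only point requiring a little care is the base case and, within it, the linear independence of $a_1$ and $b_1$ in characteristic $2$, which is handled by the convention built into the definition of ``symplectic'' earlier in this subsection. One could alternatively phrase the whole proof as a single induction that, at each stage, splits off one hyperbolic pair $(a_i, b_i)$ and passes to its orthogonal complement via \propref{ObsBaseProp}; the structure is identical. Either way the two propositions already established do all the real work, and the remaining content is the bookkeeping of concatenation, which \propref{ObsBaseProp} also supplies.
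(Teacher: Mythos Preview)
Your inductive strategy is exactly the paper's: peel off a $2$-dimensional nondegenerate subspace via \propref{TwoProp}, apply \propref{ObsBaseProp} to pass to its orthogonal complement, and recurse. The gap is in your base case. You argue that $a_1$ and $b_1$ are independent because $\omega(a_1,a_1)=0$, and in characteristic $2$ you justify this by appealing to a convention that the form is alternating. That is not the convention in this paper: ``symplectic'' is defined to mean skew-symmetric and nondegenerate, and the closing paragraph of the subsection explicitly stresses that the treatment covers forms that are \emph{not necessarily alternating}, over residue fields of arbitrary characteristic. So over $\kappa$ of characteristic $2$ one may have $\omega(a_1,a_1)\neq 0$, and then nothing in your argument prevents $w$ (and hence $b_1$) from being a scalar multiple of $a_1$; indeed $w=a_1$ is then a perfectly valid choice of $w$.

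The paper's base case sidesteps this: given any $0\neq u\in M_\kappa$, both the line $U=\mathrm{Span}_\kappa\{u\}$ and its orthogonal $U^\bot$ are proper subspaces (the latter by nondegeneracy), so some $z$ lies in neither; then $z\notin U$ gives independence and $z\notin U^\bot$ gives $\omega(u,z)\neq 0$, whence $(u,\omega(u,z)^{-1}z)$ is a symplectic basis without any appeal to $\omega(u,u)=0$. You could also repair your argument by insisting from the outset that $w\notin\mathrm{Span}_\kappa\{a_1\}$ (such $w$ with $\omega(a_1,w)\neq 0$ always exists, since a $2$-dimensional space is not the union of two proper subspaces), but as written the independence step does not go through in the generality the paper requires.
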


\begin{proof}
We induct on $n = \dim_\kappa M_\kappa$, so assume first that $M_\kappa = \mathrm{Span}_\kappa \{u,v\}$.
Nondegeneracy implies that $U \defeq \mathrm{Span}_\kappa \{u\}$ and $\ U^\bot$ are proper subspaces of $M_\kappa$,
so there is a $z \in M_\kappa$ avoiding both.
Hence $(u,\omega(u,z)^{-1}z)$ is a symplectic basis.
Assume now that $n > 2$, and use \propref{TwoProp} to pick a nondegenerate $2$-dimensional subspace 
$L$ of $M_\kappa$.
By \propref{ObsBaseProp}, $L^\bot$ is nondegenerate as well,
so by induction, both $L$ and $L^\bot$ have symplectic bases.
Our basis is constructed by invoking \propref{ObsBaseProp}.
\end{proof}

A submodule $N$ of $M$ is called isotropic if $\omega(a,b) = 0$ for all $a,b \in N$.
Nondegeneracy alone gives the following.

\begin{lem} \label{HalfLem}
The dimension of any isotropic subspace of $M_\kappa$ is at most $t$.
\end{lem}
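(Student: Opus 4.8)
The plan is to invoke the standard fact that an isotropic subspace is contained in its own orthogonal complement, and then compare dimensions using nondegeneracy alone (as the sentence preceding the statement advertises). So I would not use the symplectic basis from \propref{SympBaseExProp} at all here; the argument for \lemref{HalfLem} should stand on its own.

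First I would fix an isotropic subspace $N$ of $M_\kappa$ and set $N^\bot \defeq \{ x \in M_\kappa : \omega(x,a) = 0 \text{ for all } a \in N \}$. Isotropy of $N$ says precisely that $\omega(a,b) = 0$ for all $a, b \in N$, i.e. $N \subseteq N^\bot$. Next I would compute $\dim_\kappa N^\bot$: nondegeneracy of $\omega$ means the map $M_\kappa \to M_\kappa^{*}$ sending $x$ to $\omega(x,-)$ is an isomorphism, and composing with the restriction $M_\kappa^{*} \to N^{*}$ (which is surjective, since every functional on $N$ extends to $M_\kappa$) yields a surjection $M_\kappa \to N^{*}$ whose kernel is exactly $N^\bot$. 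Hence $\dim_\kappa N^\bot = \dim_\kappa M_\kappa - \dim_\kappa N = n - \dim_\kappa N$.

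Combining the two facts, $\dim_\kappa N \leq \dim_\kappa N^\bot = n - \dim_\kappa N$, so $2\dim_\kappa N \leq n = 2t$, giving $\dim_\kappa N \leq t$, as claimed.

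I do not expect a real obstacle: the only point needing a word of justification is the surjectivity of the restriction map $M_\kappa^{*} \to N^{*}$, which is immediate. (If one preferred to avoid dualizing, the same conclusion follows from the rank–nullity theorem applied to the composite $M_\kappa \xrightarrow{\omega} M_\kappa^{*} \to N^{*}$, whose image has dimension $\dim_\kappa N$ by nondegeneracy and whose kernel is $N^\bot$.)
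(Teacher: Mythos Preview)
Your argument is correct. The route, however, differs from the paper's. You use the duality/rank--nullity argument: nondegeneracy gives an isomorphism $M_\kappa \to M_\kappa^{*}$, restriction to $N$ is surjective with kernel $N^\bot$, hence $\dim N^\bot = n - \dim N$, and then $N \subseteq N^\bot$ forces $\dim N \leq t$. The paper instead argues directly with determinants: it assumes a basis $c_1,\dots,c_{t+1}$ of an isotropic subspace, completes it to a basis $c_1,\dots,c_n$ of $M_\kappa$, and observes that the Gram matrix $\big(\omega(c_i,c_j)\big)$ then has a $(t+1)\times(t+1)$ block of zeros in the upper-left corner; every generalized diagonal therefore contains a zero entry, so the determinant vanishes, contradicting nondegeneracy. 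Both arguments use nondegeneracy alone, as the paper advertises. Your approach is more conceptual and reusable (it immediately gives the exact formula $\dim N^\bot = n - \dim N$), while the paper's is a quick combinatorial observation that avoids any mention of dual spaces or orthogonal complements.
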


\begin{proof}
Suppose that $c_1, \dots, c_{t+1}$ is a basis of an isotropic subspace of $M_\kappa$.
Completing it to a basis $c_1, \dots, c_n$ of $M_\kappa$, 
we see that the matrix $\omega(c_i,c_j)$ has a $(t+1)\times(t+1)$ block of zeros on the upper-left.
Hence, every generalized diagonal contains a zero, 
so the determinant vanishes.
\end{proof}

\begin{prop} \label{CompProp}

Let $N$ be an isotropic subspace of $M_\kappa$, 
and let $0 \neq b_1 \in N$.
Then there exists a symplectic basis $(a_1,b_1, \dots, a_t, b_t)$ of $M_\kappa$ such that
\begin{equation}
N \subseteq \mathrm{Span}_\kappa \{b_1, \dots, b_t\}.
\end{equation}

\end{prop}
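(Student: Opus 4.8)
The plan is to induct on $n=\dim_\kappa M_\kappa$ (equivalently on $t$), splitting off at each stage a hyperbolic plane through $b_1$ and pushing the rest of $N$ into the orthogonal complement. For the base case $n=2$ we have $t=1$, so by \lemref{HalfLem} every isotropic subspace has dimension at most $1$, whence $N=\mathrm{Span}_\kappa\{b_1\}$; nondegeneracy of $\omega$ supplies some $a_1\in M_\kappa$ with $\omega(a_1,b_1)=1$, and $(a_1,b_1)$ is then the required symplectic basis (and $N\subseteq\mathrm{Span}_\kappa\{b_1\}$ trivially).

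For the inductive step $n>2$, I would again use nondegeneracy to pick $a_1\in M_\kappa$ with $\omega(a_1,b_1)=1$. Then $a_1,b_1$ are linearly independent, $L\defeq\mathrm{Span}_\kappa\{a_1,b_1\}$ is a $2$-dimensional nondegenerate subspace (its Gram matrix is the standard $2\times 2$ symplectic matrix) with symplectic basis $(a_1,b_1)$, and \propref{ObsBaseProp} gives $M_\kappa=L\oplus L^\bot$ with $L^\bot$ nondegenerate of dimension $n-2$.

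The heart of the argument is to descend $N$ to an isotropic subspace of $L^\bot$. Since $b_1\in N$ and $N$ is isotropic, $N\subseteq b_1^\bot$; since $L^\bot\subseteq b_1^\bot$ while $a_1\notin b_1^\bot$, a dimension count identifies $b_1^\bot=\mathrm{Span}_\kappa\{b_1\}\oplus L^\bot$. Let $\pi\colon b_1^\bot\to L^\bot$ be the projection along $\mathrm{Span}_\kappa\{b_1\}$ and set $\bar N\defeq\pi(N)$. Writing each $n\in N$ as $\lambda b_1+\pi(n)$, all terms of $\omega(n,n')$ that involve $b_1$ vanish, because $\omega(b_1,b_1)=0$ and $L^\bot\subseteq b_1^\bot$, so $\omega(\pi(n),\pi(n'))=\omega(n,n')=0$ and $\bar N$ is isotropic in $L^\bot$. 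If $\bar N=0$, equivalently $N=\mathrm{Span}_\kappa\{b_1\}$, choose any symplectic basis $(a_2,b_2,\dots,a_t,b_t)$ of $L^\bot$ via \propref{SympBaseExProp}; otherwise pick $0\neq b_2\in\bar N$ and apply the inductive hypothesis to $(L^\bot,\bar N,b_2)$ to get a symplectic basis $(a_2,b_2,\dots,a_t,b_t)$ of $L^\bot$ with $\bar N\subseteq\mathrm{Span}_\kappa\{b_2,\dots,b_t\}$. In either case $(a_1,b_1)\frown(a_2,b_2,\dots,a_t,b_t)$ is a symplectic basis of $M_\kappa$ by \propref{ObsBaseProp}, and every $n\in N$ lies in $\mathrm{Span}_\kappa\{b_1\}+\bar N\subseteq\mathrm{Span}_\kappa\{b_1,\dots,b_t\}$, which closes the induction.

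The main obstacle is precisely this reduction step: recognizing that $b_1^\bot=\mathrm{Span}_\kappa\{b_1\}\oplus L^\bot$ and verifying that the projected image $\bar N$ stays isotropic, which is what allows the recursion to proceed on a space of half the size. Once that is in place the rest is routine assembly using \propref{ObsBaseProp}, \propref{SympBaseExProp} and \lemref{HalfLem}. It is also worth isolating the degenerate case $\bar N=0$ separately, since both the statement being proved and the inductive hypothesis require a nonzero vector to anchor the basis.
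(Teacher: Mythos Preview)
Your proof is correct, but it proceeds by a genuinely different route from the paper's. The paper first fixes an entire basis $b_1,\dots,b_s$ of $N$ and then, by a finite induction on $r\leq s$, builds dual vectors $a_1,\dots,a_s$ one at a time using nondegeneracy so that $(a_1,b_1,\dots,a_s,b_s)$ is already a symplectic basis of its span; only after all of $N$ has been accounted for does it pass to the orthogonal complement and fill in the remaining $(a_i,b_i)$ via \propref{ObsBaseProp} and \propref{SympBaseExProp}. Your argument instead inducts on $\dim_\kappa M_\kappa$: you split off a single hyperbolic plane through $b_1$, project $N$ into $L^\bot$, and recurse there. One small consequence of this difference is that in the paper's basis the vectors $b_1,\dots,b_s$ literally form a basis of $N$, whereas in yours the $b_i$ with $i\geq 2$ are only projections of elements of $N$; this is harmless since the statement only asks for $N\subseteq\mathrm{Span}_\kappa\{b_1,\dots,b_t\}$. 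Your approach is arguably cleaner in that it reuses the proposition itself as the inductive engine, while the paper's approach avoids the need to verify that the projected image $\bar N$ remains isotropic.

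One cosmetic point: your claim that the Gram matrix of $L$ is ``the standard $2\times2$ symplectic matrix'' is not literally true in characteristic~$2$, since the paper's forms are skew-symmetric but not assumed alternating, so $\omega(a_1,a_1)$ need not vanish. This does not matter: the Gram matrix is $\left(\begin{smallmatrix}\omega(a_1,a_1)&1\\-1&0\end{smallmatrix}\right)$, which still has determinant~$1$, and the paper's definition of symplectic basis imposes no condition on $\omega(a_i,a_i)$, so $(a_1,b_1)$ is a symplectic basis of $L$ regardless.
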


\begin{proof}

Let $b_1, \dots, b_s$ be a basis for $N$ over $\kappa$.
By \lemref{HalfLem}, we know that $s \leq t$.
We construct the elements $a_1, \dots, a_s \in M_\kappa$ inductively,
so suppose that for some $r \leq s$ we have already selected $a_i \in M_\kappa$ for all $i < r$ such that 
\begin{equation} \label{IndEq}
\{a_i\}_{i < r} \cup \{b_1, \dots, b_s\}
\end{equation}
is an independent set, and
\begin{equation} \label{OrthEq}
\begin{split}
&\forall \ 1 \leq i < r \quad \omega(a_i,b_i) = 1, \ \forall \ 1 \leq j \neq i < r \quad \omega(a_i, a_j) = 0, \\
&\forall \ 1 \leq i < r, \ 1 \leq j \leq s, \ i \neq j, \quad \omega(a_i, b_j) =  0.
\end{split}
\end{equation}
Since \eqref{IndEq} is independent,
nondegeneracy supplies an $a_r \in M_\kappa$ such that
\begin{equation} \label{RCondEq}
\forall \ 1 \leq i < r, \ 1 \leq r \neq j \leq s \quad \omega(a_r,b_j) = \omega(a_r, a_i) = 0, \ \omega(a_r, b_r)=1.
\end{equation} 
Note that by \eqref{OrthEq}, the set \eqref{IndEq} is orthogonal to $b_r$,
so $a_r$ is not in this set's span since $\omega(a_r,b_r) = 1$ by \eqref{RCondEq}.
We conclude that 
\begin{equation}
\{a_i\}_{i < r+1} \cup \{b_1, \dots, b_s\}
\end{equation}
is independent,
and that \eqref{OrthEq} holds for $r+1$.
Induction implies that
\begin{equation} \label{FBasisEq}
(a_1,b_1, \dots, a_s,b_s)
\end{equation}
is a symplectic basis for its span $L$.
Using this basis one immediately verifies that $\det(\omega|_L) = 1$ so $L$ is nondegenerate.
By \propref{ObsBaseProp}, $L^\bot$ is nondegenerate as well, 
so by \propref{SympBaseExProp} it has a symplectic basis
\begin{equation} \label{SBasisEq}
(a_{s+1},b_{s+1}, \dots, a_t,b_t).
\end{equation}
Appending \eqref{SBasisEq} to \eqref{FBasisEq} gives the desired basis (by \propref{ObsBaseProp}).
\end{proof}

We have a reduction mod $\mathfrak{m}$ homomorphism of $R$-modules $\rho \colon M \to M_\kappa$ induced by the homomorphism $\rho \colon R \to \kappa$ (an abuse of notation, of course).
By definition, $\rho$ respects the symplectic forms. That is,
\begin{equation} \label{RespectEq}
\forall a,b \in M \quad \omega\big(\rho(a),\rho(b)\big) = \rho\big(\omega(a,b)\big).
\end{equation}

\begin{prop} \label{LiftProp}

Let $B_1 \in M$, set $b_1 \defeq \rho(B_1)$, and suppose that 
\begin{equation}
D = (a_1,b_1, \dots, a_t,b_t)
\end{equation}
is a symplectic basis of $M_\kappa$.
Then $B_1$ can be completed to a symplectic basis 
\begin{equation}
C = (A_1,B_1, \dots, A_t,B_t)
\end{equation}
of $M$ over $R$ such that $\rho(C) = D$.

\end{prop}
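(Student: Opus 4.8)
The plan is to carry out, over the local ring $R$, the same dimension-by-dimension construction used for vector spaces in \propref{CompProp} and \propref{SympBaseExProp}, exploiting that over a local ring a square matrix is invertible as soon as its reduction modulo $\mathfrak m$ is. I would argue by induction on $t$. For the base case $t=1$: choose any $A_1' \in M$ with $\rho(A_1') = a_1$. Then $(A_1',B_1)$ reduces to the basis $(a_1,b_1)$ of $M_\kappa$, hence is an $R$-basis of $M$ (by Nakayama's lemma the lifts of $D$ generate $M$, and a finitely generated free $R$-module of rank $n$ cannot be generated by fewer than $n$ elements, while any $n$-element generating set of it is a basis). Since $\rho\big(\omega(A_1',B_1)\big) = \omega(a_1,b_1) = 1$, the element $u \defeq \omega(A_1',B_1)$ is a unit, so $A_1 \defeq u^{-1}A_1'$ satisfies $\omega(A_1,B_1) = 1$ and $\rho(A_1) = a_1$; thus $C = (A_1,B_1)$ works (no cross-term conditions arise for $t=1$).

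For the inductive step ($t > 1$) I would first run the $t=1$ construction to produce $A_1 \in M$ with $\omega(A_1,B_1)=1$ and $\rho(A_1) = a_1$, and, lifting $a_2,b_2,\dots,a_t,b_t$ arbitrarily, extend $\{A_1,B_1\}$ to an $R$-basis of $M$. Set $P \defeq RA_1 \oplus RB_1$ and $P^\bot \defeq \{x \in M : \omega(x,A_1) = \omega(x,B_1) = 0\}$. The heart of the argument is the decomposition $M = P \oplus P^\bot$, where $P^\bot$ is free of rank $2(t-1)$, the form $\omega$ restricts to a symplectic form on $P^\bot$, and $(P^\bot)_\kappa = \mathrm{Span}_\kappa\{a_2,b_2,\dots,a_t,b_t\}$. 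To obtain it, note that in the symplectic basis $D$ (ordered $a_1,b_1,a_2,b_2,\dots$) the Gram matrix of $\omega$ on $M_\kappa$ is block diagonal with $2\times 2$ blocks $\bigl(\begin{smallmatrix}\omega(a_i,a_i) & 1 \\ -1 & \omega(b_i,b_i)\end{smallmatrix}\bigr)$; since $M_\kappa$ is nondegenerate, each block has nonzero determinant. In particular the matrix $\bigl(\begin{smallmatrix}\omega(A_1,A_1) & -1 \\ 1 & \omega(B_1,B_1)\end{smallmatrix}\bigr)$, which governs solving $x = \alpha A_1 + \beta B_1 + y$ with $\alpha,\beta \in R$ and $y \in P^\bot$, reduces modulo $\mathfrak m$ to the first of these blocks and is therefore invertible over $R$; this yields the unique such decomposition, so $M = P \oplus P^\bot$ and $P^\bot \cong M/P$ is free of rank $2(t-1)$. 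Comparing determinants of Gram matrices (the one on $M$ in our basis is block diagonal, since $P \bot P^\bot$) shows $\omega|_{P^\bot}$ is nondegenerate; and since $\rho$ respects $\omega$ we get $(P^\bot)_\kappa \subseteq (P_\kappa)^\bot$, which by a dimension count forces equality, and $(P_\kappa)^\bot = \mathrm{Span}_\kappa\{a_2,\dots,b_t\}$ because $D$ is symplectic. Thus $(a_2,b_2,\dots,a_t,b_t)$ is a symplectic basis of $(P^\bot)_\kappa$.

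To conclude, choose any $B_2 \in P^\bot$ with $\rho(B_2) = b_2$ — possible since the reduction map $P^\bot \to (P^\bot)_\kappa$ is surjective — and apply the inductive hypothesis to the free $R$-module $P^\bot$ of even positive rank $2(t-1)$, with the symplectic form $\omega|_{P^\bot}$, the element $B_2$, and the symplectic basis $(a_2,b_2,\dots,a_t,b_t)$ of $(P^\bot)_\kappa$. This produces a symplectic basis $(A_2,B_2,\dots,A_t,B_t)$ of $P^\bot$ reducing to $(a_2,\dots,b_t)$. Then $C \defeq (A_1,B_1,A_2,B_2,\dots,A_t,B_t)$ is an $R$-basis of $M$ (as $M = P \oplus P^\bot$), it reduces to $D$, and it is symplectic: the relations internal to $P$ and to $P^\bot$ hold by construction, while every mixed pairing $\omega(A_1,A_j),\omega(A_1,B_j),\omega(B_1,A_j),\omega(B_1,B_j)$ with $j \geq 2$ vanishes because $A_j,B_j \in P^\bot$.

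The main obstacle is precisely this decomposition step and its compatibility with reduction modulo $\mathfrak m$. One must resist the temptation merely to make $C$ congruent to a lift of $D$: the defining relations of a symplectic basis are equalities in $R$, not congruences, which is why we normalize $A_1$ by the unit $u$ on the nose and pass to $P^\bot$ to annihilate the mixed pairings exactly. I note also that in residue characteristic $2$ the form $\omega$ need not be alternating; but the definition of a symplectic basis above does not require $\omega(a_i,a_i) = 0$, and the block-diagonal Gram matrix computation — the only place where this could matter — goes through verbatim.
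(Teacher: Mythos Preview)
Your proof is correct and follows essentially the same approach as the paper: both argue by induction on $t$, isolate a rank-$2$ summand $P$ containing $B_1$ together with a suitably normalized $A_1$, split $M = P \oplus P^\bot$ using that the relevant Gram matrix is a unit (since its reduction is), and then apply the inductive hypothesis to the complement. The only cosmetic difference is that the paper carries out the orthogonalization via explicit elementary modifications of the lifted basis (first clearing $\omega(A_i,B_1),\omega(B_i,B_1)$, then $\omega(A_1,A_i),\omega(A_1,B_i)$, then rescaling $A_1$), whereas you phrase the same step as the abstract direct-sum decomposition $M = P \oplus P^\bot$ obtained by inverting the $2\times 2$ block.
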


\begin{proof}
Take any basis $C = (A_1,B_1, \dots, A_t,B_t)$ of $M$ over $R$ such that $\rho(C) = D$.
Since $D$ is symplectic, \eqref{DefSymBasEq} tells us that
\begin{equation} \label{ModEq}
\begin{split}
&\forall \ 1 \leq i \leq t \quad \omega(A_i,B_i) \equiv 1 \mod \mathfrak{m}, \\
&\forall \ 1 \leq i \neq j \leq t \quad 
\omega(A_i, B_j) \equiv \omega(A_i, A_j) \equiv \omega(B_i, B_j) \equiv 0 \mod \mathfrak{m}.
\end{split}
\end{equation}

Let us now `symplectify' $C$ without changing the fact that it is a basis of $M$ that reduces to $D$ mod $\mathfrak{m}$.
Since $R$ is a local ring, and by \eqref{ModEq} we have $\omega(A_1,B_1) \notin \mathfrak{m}$, we know that $\omega(A_1,B_1) \in R^*$
so by replacing
\begin{equation}
A_i \leftarrow A_i - \frac{\omega(A_i,B_1)}{\omega(A_1,B_1)}A_1, \quad 
B_i \leftarrow B_i - \frac{\omega(B_i,B_1)}{\omega(A_1,B_1)}A_1 \quad (i > 1)
\end{equation}
we assure that $\omega(A_i,B_1) = \omega(B_i,B_1) = 0$.
We will now also make sure that $\omega(A_1,A_i) = \omega(A_1,B_i) = 0$ for $i > 1$.
Since $D$ is symplectic, $\omega$ is represented in this basis by a block matrix,
with a $2 \times 2$ block in the upper-left and an $(n-2)\times(n-2)$ block $T$ in the lower-right (recall that $n=2t$).
Nodegeneracy implies that $T$ is invertible,
so the lower-right $(n-2)\times(n-2)$ submatrix $\Lambda$ of the matrix representing $\omega$ in the basis $C$ is also invertible, as it reduces to $T$ mod $\mathfrak{m}$.
We can thus find a $v = (\alpha_2, \beta_2, \dots, \alpha_t, \beta_t) \in \mathfrak{m}^{n-2}$ such that
\begin{equation} \label{LinEq}
v\Lambda = \big(\omega(A_1,A_2), \omega(A_1,B_2), \dots, \omega(A_1,A_t), \omega(A_1,B_t)\big).
\end{equation}
Upon replacing
\begin{equation}
A_1 \leftarrow A_1 - (\alpha_2A_2 + \beta_2B_2 + \dots + \alpha_tA_t + \beta_tB_t)
\end{equation}
one verifies that \eqref{LinEq} guarantees that $\omega(A_1,A_i) = \omega(A_1,B_i) = 0$ for $i > 1$.
After multiplying $A_1$ by $\omega(A_1,B_1)^{-1}$ we also have $\omega(A_1,B_1) = 1$.

Since $L \defeq \mathrm{Span}_R\{A_2,B_2, \dots, A_t,B_t\}$ is orthogonal to $\mathrm{Span}_R\{A_1,B_1\}$,
it follows that $L$ is nondegenerate,
so by induction, we may assume that $(A_2,B_2, \dots, A_t,B_t)$ is a symplectic basis that reduces to $(a_2,b_2, \dots, a_t, b_t)$.
Therefore $(A_1,B_1, \dots, A_t,B_t)$ is the required symplectic basis.
\end{proof}

An immediate consequence of \propref{SympBaseExProp} and \propref{LiftProp} is the following.

\begin{cor}
Any invertible skew-symmetric $2t \times 2t$ matrix over a local ring is congruent to a block matrix with $t$ blocks of the form
\begin{equation}
\left(\begin{array}{cc} * & 1\\ -1 & * \end{array}\right)
\end{equation}
on the main diagonal, and zeros at all the other entries.
\end{cor}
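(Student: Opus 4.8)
The plan is to read the statement through the dictionary between invertible skew-symmetric matrices and symplectic forms developed above, and then feed it into \propref{SympBaseExProp} and \propref{LiftProp}.

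First I would fix an invertible skew-symmetric $2t \times 2t$ matrix $\Omega$ over the local ring $R$ (we may assume $t \geq 1$, the $0 \times 0$ case being vacuous), put $M \defeq R^{2t}$, and define $\omega \colon M \times M \to R$ by $\omega(a,b) \defeq a^{\mathrm T}\Omega b$. A routine transposition computation using $\Omega^{\mathrm T} = -\Omega$ shows $\omega(b,a) = -\omega(a,b)$, and the matrix of $\omega$ in the standard basis of $M$ is $\Omega$, whose determinant lies in $R^*$; hence $\omega$ is a symplectic form in the sense of this subsection, with $n = 2t$ even and positive.

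Next I would apply \propref{SympBaseExProp} to obtain a symplectic basis $(a_1,b_1,\dots,a_t,b_t)$ of $M_\kappa$, choose any $B_1 \in M$ with $\rho(B_1) = b_1$, and invoke \propref{LiftProp} to extend $B_1$ to a symplectic basis $C = (A_1,B_1,\dots,A_t,B_t)$ of $M$ over $R$. Letting $P$ be the matrix whose columns are $A_1,B_1,\dots,A_t,B_t$, the fact that $C$ is an $R$-basis of $M$ gives that $P$ is invertible, and $P^{\mathrm T}\Omega P$ is precisely the Gram matrix $\big(\omega(x_i,x_j)\big)$ of $\omega$ in the basis $C$. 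By \eqref{DefSymBasEq} this Gram matrix is block diagonal with $t$ blocks, the $i$-th of which equals
\begin{equation}
\left(\begin{array}{cc} \omega(A_i,A_i) & \omega(A_i,B_i)\\ \omega(B_i,A_i) & \omega(B_i,B_i) \end{array}\right)
= \left(\begin{array}{cc} \omega(A_i,A_i) & 1\\ -1 & \omega(B_i,B_i) \end{array}\right),
\end{equation}
which is of the required shape; thus $\Omega$ is congruent to such a block matrix via $P$.

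I do not expect any genuine obstacle here, since the work is entirely absorbed by \propref{SympBaseExProp} and \propref{LiftProp}. The one point worth flagging is that the diagonal entries $\omega(A_i,A_i)$, $\omega(B_i,B_i)$ cannot be cleared in general: skew-symmetry only yields $2\,\omega(x,x) = 0$, which says nothing when the residue characteristic is $2$, and this is exactly why asterisks rather than zeros appear in the statement.
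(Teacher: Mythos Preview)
Your proposal is correct and follows exactly the route the paper indicates: the corollary is stated there as an immediate consequence of \propref{SympBaseExProp} and \propref{LiftProp}, and you have unpacked precisely that deduction. Your closing remark about why the diagonal entries remain as asterisks in residue characteristic $2$ is also on point.
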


The material in this section is standard for those concerned only with fields of characteristic different from $2$, 
or with forms that are alternating (a condition stronger than skew-symmetry).
However, we could not find a unified treatment of (not necessarily alternating) forms over rings with arbitrary residue characteristic in the literature,
so we have included it in this section for the reader's convenience.

\subsection{Profinite groups}

\subsubsection{Generalities}

We will be dealing with profinite groups, so all notions should be interpreted in the topological sense.
For instance, subgroups are closed, and homomorphisms are continuous.

\begin{lem} \label{EmbProbLem}

A finitely generated subgroup $K$ of a profinite group $G$ is a retract if and only if there exists a homomorphism 
$\lambda \colon G \to K/\Phi(K)$ extending the Frattini map $\varphi \colon K \to K/\Phi(K)$,
such that the embedding problem
\begin{equation*}
\begin{tikzpicture}[scale=1.5]
\node (B) at (1.4,1) {$G$};
\node (C) at (0,0) {$K$};
\node (D) at (1.4,0) {$K/\Phi(K)$};
\path[->,font=\scriptsize,>=angle 90]
(B) edge node[right]{$\lambda$} (D)
(C) edge node[above]{$\varphi$} (D);
\end{tikzpicture}
\end{equation*}
is weakly solvable (that is, there exists a homomorphism $\mu \colon G \to K$ that makes the diagram commutative).

\end{lem}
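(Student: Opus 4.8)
The plan is to prove the two implications separately; the argument is short, being in essence a Frattini argument combined with the Hopfian property of topologically finitely generated profinite groups.

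For the forward direction, suppose $\tau \colon G \to K$ is a continuous retraction, so $\tau(k) = k$ for every $k \in K$. Set $\lambda = \varphi \circ \tau \colon G \to K/\Phi(K)$. For $k \in K$ we have $\lambda(k) = \varphi(\tau(k)) = \varphi(k)$, hence $\lambda$ extends $\varphi$; and taking $\mu = \tau$ gives $\varphi \circ \mu = \lambda$, so the embedding problem is weakly solvable.

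For the converse, let $\lambda$ be as in the statement and let $\mu \colon G \to K$ be a weak solution, so $\varphi \circ \mu = \lambda$. Restricting to $K$ and using that $\lambda$ extends $\varphi$ yields $\varphi \circ (\mu|_K) = \lambda|_K = \varphi$; that is, the continuous endomorphism $\mu|_K \colon K \to K$ induces the identity on $K/\Phi(K)$, and in particular $\mu(K)\Phi(K) = K$. Since $K$ is topologically finitely generated, $\Phi(K)$ consists of non-generators, so any closed subgroup of $K$ that supplements $\Phi(K)$ must equal $K$; applied to $\mu(K)$ this shows $\mu|_K$ is surjective, and a surjective continuous endomorphism of a topologically finitely generated profinite group is an automorphism. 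Hence $\mu|_K$ is a topological automorphism of $K$, and $\tau \defeq (\mu|_K)^{-1} \circ \mu \colon G \to K$ is a continuous homomorphism with $\tau(k) = (\mu|_K)^{-1}\big(\mu|_K(k)\big) = k$ for all $k \in K$, i.e.\ a retraction of $G$ onto $K$.

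The only point requiring care — the main obstacle, although it is a mild one — is that a weak solution $\mu$ need not restrict to the identity on $K$, only to a map that agrees with the identity modulo $\Phi(K)$. This is remedied by composing $\mu$ with the inverse of the automorphism it induces on $K$; the step is legitimate precisely because $K$ is finitely generated, which is what makes $\mu|_K$ both surjective (by the Frattini/non-generator property) and injective (by the Hopfian property of finitely generated profinite groups). No structure of $G$ beyond being profinite enters the proof.
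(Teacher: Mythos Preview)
Your proof is correct and follows essentially the same route as the paper's: both directions use the Frattini argument to get $\mu(K)=K$ and then the Hopfian property of finitely generated profinite groups to conclude that $\mu|_K$ is an automorphism. The only cosmetic difference is that the paper phrases the conclusion via the normal complement $N=\Ker(\mu)$ (so $KN=G$ and $K\cap N=\{1\}$), whereas you write the retraction explicitly as $\tau=(\mu|_K)^{-1}\circ\mu$; these are the same map.
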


\begin{proof}

If $\tau \colon G \to K$ is a retraction, then by setting $\lambda \defeq \varphi \circ \tau$ we get an extension of $\varphi$ for which $\tau$ is a weak solution to our embedding problem. 

For the other direction, suppose that $\mu \colon G \to K$ is a weak solution to our embedding problem 
(that is, $\lambda = \varphi \circ \mu$). We have
\begin{equation}
K/\Phi(K) = \varphi(K) = \lambda(K) = \varphi\big(\mu(K)\big)
\end{equation}
so we conclude that $\mu(K)\Phi(K) = K$.
From \cite[Corollary 2.8.5]{RZ}, we get that $\mu(K) = K$.
Taking $N \defeq \Ker( \mu)$ we see that $KN = G$ and since $K$ is finitely generated,
\cite[Proposition 2.5.2]{RZ} tells us that $K \cap N = \{1\}$.
\end{proof}

Recall that a finite group $S$ is called supersolvable if all its chief factors are cyclic 
(for example, a nilpotent group).
Accordingly, a profinite group $S$ is said to be prosupersolvable, 
if it is an inverse limit of finite supersolvable groups. 
Prosupersolvable groups are studied, for instance, in \cite{AS, OR, RZ, Sh2, Sm}.
Using \lemref{EmbProbLem}, we generalize the virtual retraction property of free pro-$p$ groups that follows from \cite[Theorem 9.1.19]{RZ} (see also \cite{Lub, Rib}).

\begin{cor} \label{SuperVRPCor}

Let $G$ be a projective profinite group, and let $K$ be a finitely generated prosupersolvable subgroup whose order is divisible by only finitely many different primes.
Then $K$ is virtually a retract of $G$.

\end{cor}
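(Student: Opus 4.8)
Since $K$ is finitely generated, it suffices to produce a homomorphism $\lambda \colon G \to K/\Phi(K)$ extending the Frattini map $\varphi \colon K \to K/\Phi(K)$ such that the resulting embedding problem is weakly solvable; then $K$ is a retract of $G$, and since $K$ is finitely generated, replacing $G$ by a suitable open subgroup containing $K$ is not even needed — $K$ is an honest retract of $G$ itself. Wait — one should be careful: $K$ is a retract of $G$, hence certainly a \emph{virtual} retract. So the real content is: solve a certain embedding problem for $G$.

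\textbf{First I would reduce the embedding problem to a manageable one using projectivity.} A projective profinite group $G$ has the property that every finite embedding problem with a \emph{split} quotient (more precisely, every embedding problem $G \to A \leftarrow B$ with $B \to A$ admitting a splitting over a Sylow subgroup, or simply every embedding problem with abelian kernel that splits) is weakly solvable; more to the point, projectivity of $G$ means $\mathrm{cd}(G) \le 1$, so $G$ lifts against any epimorphism of finite groups \emph{whose kernel is a $q$-group for a prime $q$}, provided a set-theoretic lift exists — in fact against any epimorphism with nilpotent kernel, and iterating, against any epimorphism whose kernel is prosolvable. The key classical fact (Gruenberg) is that a profinite group is projective iff every finite embedding problem for it with abelian kernel is weakly solvable. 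Now $K \to K/\Phi(K)$ has kernel $\Phi(K)$, which need not be abelian, but we can filter it: $\Phi(K)$ is an inverse limit of finite supersolvable groups (being a subgroup of the prosupersolvable group $K$), hence prosolvable, hence built from finitely many prime-power layers since $|K|$ is divisible by only finitely many primes. So we break the single embedding problem $G \to K/\Phi(K) \leftarrow K$ into a finite tower of embedding problems each with elementary-abelian (or at least abelian $q$-group) kernel, and solve each one by projectivity.

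\textbf{The one genuine subtlety is producing the map $\lambda$ in the first place}, i.e. extending $\varphi \colon K \to K/\Phi(K)$ to all of $G$. This is where prosupersolvability and the "finitely many primes" hypothesis are used: the supersolvable group $K/\Phi(K) \cong \prod_q (K/\Phi(K))_q$ is a finite product over the primes $q$ dividing $|K|$ of elementary-abelian $q$-groups (the Frattini quotient of a prosupersolvable group is abelian, in fact a product of elementary abelian $p$-groups over the relevant primes), so $H^1$ obstructions live in $\bigoplus_q H^*(G, \mathbb{F}_q)$-type groups; since $G$ is projective, $\mathrm{cd}_q(G) \le 1$ for every prime $q$, which kills the obstruction to extending each $q$-component of $\varphi$ across the surjection $G \to G$... hmm, rather: one extends $\varphi$ by first extending the projection $K \to K/\Phi(K) \to (K/\Phi(K))_q$ to $G$ for each $q$ separately — this is a homomorphism from $G$ to a finite elementary abelian $q$-group restricting to a prescribed homomorphism on the subgroup $K$, which exists because $\mathrm{cd}_q(G) \le 1$ (equivalently, $H^1(-,\mathbb{F}_q)$ sends the inclusion $K \hookrightarrow G$ to a surjection, since the relevant relative $H^2$ or the cokernel vanishes) — and then assembling the $\lambda_q$ into $\lambda \colon G \to K/\Phi(K)$ using that the target splits as the product of the $(K/\Phi(K))_q$.

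\textbf{I expect the main obstacle to be the bookkeeping of the iterated lifting}, namely verifying that at each stage of the tower filtering $\Phi(K)$ the partial lift $G \to K/K_i$ can be pushed to $G \to K/K_{i+1}$: one must check that the relevant embedding problem has abelian kernel and is induced from a genuine extension of finite groups so that projectivity (in the form $\mathrm{cd}(G) \le 1$) applies, and that after finitely many steps one reaches $K$ itself — this uses finiteness of the set of primes dividing $|K|$ together with the fact that a finitely generated prosupersolvable group of such order has $\Phi(K)$ topologically finitely generated and a finite subnormal series with abelian $q$-group quotients down through which one descends; then the weak solution $\mu \colon G \to K$ assembled from the tower, composed appropriately, is the desired weak solution to the original embedding problem, and \lemref{EmbProbLem} finishes the proof.
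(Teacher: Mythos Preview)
There is a genuine gap: your claim that $\varphi \colon K \to K/\Phi(K)$ extends to a homomorphism $\lambda \colon G \to K/\Phi(K)$ on \emph{all} of $G$ is false, and the justification via $\mathrm{cd}_q(G) \le 1$ does not work. Projectivity of $G$ gives lifting along epimorphisms, not surjectivity of the restriction map $H^1(G,\mathbb{F}_q) \to H^1(K,\mathbb{F}_q)$; there is no ``relative $H^2$'' long exact sequence for an arbitrary closed subgroup that would force this. A concrete counterexample already in the pro-$p$ world: take $G = \mathbb{Z}_p$ and $K = p\mathbb{Z}_p$. Then $K$ is finitely generated, pro-$p$ (hence prosupersolvable with a single prime), and $K/\Phi(K) \cong \mathbb{F}_p$, but every homomorphism $G \to \mathbb{F}_p$ kills $p\mathbb{Z}_p$, so the Frattini map of $K$ does not extend to $G$. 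Correspondingly $K$ is \emph{not} a retract of $G$ (a complement would be a subgroup of order $p$, impossible in the torsion-free $\mathbb{Z}_p$), only a virtual one. So your assertion that ``replacing $G$ by a suitable open subgroup is not even needed'' is simply wrong.

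The paper's proof repairs exactly this point, and is in fact much shorter than your outline. The hypotheses on $K$ (finitely generated, prosupersolvable, finitely many primes) are used solely to guarantee that $\Phi(K)$ is \emph{open} in $K$ (this is \cite[Proposition~2.8.11]{RZ}), so that $K/\Phi(K)$ is finite. Then the standard extension lemma \cite[Lemma~1.2.5(c)]{FJ} produces an open subgroup $U \le G$ containing $K$ and an extension $\lambda \colon U \to K/\Phi(K)$ of $\varphi$. Since open subgroups of projective profinite groups are projective, the embedding problem for $U$ is weakly solvable in one stroke --- projectivity lifts against \emph{any} epimorphism of profinite groups, so no filtration of $\Phi(K)$ by abelian layers is required --- and \lemref{EmbProbLem} finishes. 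Your tower argument is not incorrect, merely redundant; the real issue is the missing passage to $U$.
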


\begin{proof}

\cite[Proposition 2.8.11]{RZ} implies that $\Phi(K) \leq_o K$ so by \cite[Lemma 1.2.5 (c)]{FJ}, 
the Frattini quotient map $\varphi \colon K \to K/\Phi(K)$ extends to a homomorphism 
$\lambda \colon U \to K/\Phi(K)$ where $U$ is some open subgroup of $G$ containing $K$.
By \cite[Theorem D.4.1]{RZ}, $U$ is projective, so the embedding problem
\begin{equation}
\begin{tikzpicture}[scale=1.5]
\node (B) at (1.4,1) {$U$};
\node (C) at (0,0) {$K$};
\node (D) at (1.4,0) {$K/\Phi(K)$};
\path[->,font=\scriptsize,>=angle 90]
(B) edge node[right]{$\lambda$} (D)
(C) edge node[above]{$\varphi$} (D);
\end{tikzpicture}
\end{equation}
is weakly solvable. By \lemref{EmbProbLem}, $K$ is a retract of $U$.
\end{proof}

Subgroups $K$ to which \corref{SuperVRPCor} is applicable (such as (free) pro-$p$ groups) are provided by the following claim, whose proof is in \cite{Sm}.

\begin{prop}

Let $D$ be a finite set of primes, let $\mathcal{C}$ be the family of finite supersolvable groups whose order is not divisible by primes not in $D$, and let $K$ be a free pro-$\mathcal{C}$ group.
Then $K$ is projective.

\end{prop}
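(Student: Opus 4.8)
The plan is to verify that $\mathrm{cd}(K)\le 1$, which by Gruenberg's criterion is equivalent to projectivity of a profinite group. Since $\mathrm{cd}(K)=\sup_\ell \mathrm{cd}_\ell(K)$, it is enough to bound $\mathrm{cd}_\ell(K)$ for each prime $\ell$. For $\ell\notin D$ every group in $\mathcal C$ is an $\ell'$-group, so $K$ is pro-$\ell'$ and $\mathrm{cd}_\ell(K)=0$; so fix $\ell=p\in D$. By d\'evissage along composition series it suffices to prove $H^2(K,V)=0$ for every finite simple $\mathbb F_p[K]$-module $V$. The $K$-action on such a $V$ is a continuous map into the finite group $\mathrm{Aut}(V)$, hence factors through a finite quotient $B$ of $K$; as $K$ is pro-$\mathcal C$, any such $B$ is a finite supersolvable $D$-group, and $V$ is an irreducible $\mathbb F_p[B]$-module. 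Every class in $H^2(K,V)$ is inflated from $H^2(B,V)$ for some such $B$, so I would conclude by treating the cases $\dim_{\mathbb F_p}V=1$ and $\dim_{\mathbb F_p}V\ge 2$ separately.

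If $\dim_{\mathbb F_p}V=1$, a class in $H^2(B,V)$ is represented by an extension $1\to V\to A\to B\to1$; since $V$ is normal of prime order in $A$ and $A/V=B$ is supersolvable, $A$ is supersolvable, and because $|A|=p\,|B|$ is a $D$-number, $A\in\mathcal C$. As $K$ is free pro-$\mathcal C$, the epimorphism $K\to B$ lifts to a homomorphism $K\to A$, and such a lift is a section of the pulled-back extension $1\to V\to K\times_B A\to K\to1$; hence the inflation of the class to $H^2(K,V)$ vanishes. Since this applies to every class and every relevant $B$, we get $H^2(K,V)=0$. (This is precisely the mechanism behind projectivity of free pro-$p$ groups; when $|D|=1$ the module $V$ is automatically one-dimensional and only this case arises.)

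If $\dim_{\mathbb F_p}V\ge 2$, the extension $A$ above need no longer be supersolvable, so I would instead reduce to the finite-group statement: \emph{if $B$ is a finite supersolvable group and $V$ is an irreducible $\mathbb F_p[B]$-module with $\dim_{\mathbb F_p}V\ge 2$, then $H^n(B,V)=0$ for all $n\ge0$.} Granting this, all the groups $H^2(B,V)$ above vanish and $H^2(K,V)=0$. I would prove the finite statement by induction on $|B|$: choose a minimal normal subgroup $B_1\lhd B$, which is cyclic of prime order $\ell$ by supersolvability. The fixed space $V^{B_1}$ is a $B$-submodule, hence $0$ or $V$. If $V^{B_1}=0$, then $\ell\neq p$ (a nontrivial $p$-group has nonzero fixed points on a nonzero $\mathbb F_p$-module), so $H^*(B_1,V)=0$ and the Lyndon--Hochschild--Serre spectral sequence gives $H^*(B,V)=0$. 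If $V^{B_1}=V$, then $B_1$ acts trivially on $V$, which is thus an irreducible $\mathbb F_p[B/B_1]$-module of dimension $\ge2$ over the strictly smaller supersolvable group $B/B_1$; in the spectral sequence with $E_2^{a,b}=H^a\!\big(B/B_1,\,H^b(B_1,\mathbb F_p)\otimes_{\mathbb F_p}V\big)$ each coefficient module is either zero or a twist of $V$ by a character of $B/B_1$, hence again irreducible of dimension $\ge2$, so all $E_2$-terms vanish by the inductive hypothesis and $H^*(B,V)=0$.

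The main obstacle is this finite-group step, and within it the sub-case where the minimal normal subgroup $B_1$ has order exactly $p$: there $H^*(B_1,\mathbb F_p)$ is nonzero in every degree, so the higher rows of the spectral sequence cannot be dropped, and one must identify the $B/B_1$-module $H^b(B_1,\mathbb F_p)$ in each degree (a character) and feed the twisted modules $\psi\otimes V$ back into the induction. This is why the induction must establish vanishing in all cohomological degrees at once rather than only in degree $2$, and why supersolvability of the finite quotients of $K$ — ensuring a cyclic minimal normal subgroup, so that these twists stay irreducible — enters essentially.
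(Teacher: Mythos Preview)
Your argument is correct, but note that the paper does not actually prove this proposition: it only records the statement and defers to \cite{Sm} for the proof. So there is no ``paper's own proof'' to compare against here; you have supplied a complete self-contained argument where the paper gives only a citation.

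The reduction to $H^2(K,V)=0$ for finite simple $\mathbb{F}_p[K]$-modules with $p\in D$ is standard. In the one-dimensional case you correctly observe that an extension of a supersolvable group by a normal subgroup of prime order is again supersolvable, so the lifting property of free pro-$\mathcal{C}$ groups applies directly; since $\mathcal{C}$ is \emph{not} closed under arbitrary extensions, this is exactly the point at which that argument would break for higher-dimensional $V$, as you note. Your treatment of the case $\dim_{\mathbb{F}_p}V\ge 2$ via the auxiliary vanishing statement for finite supersolvable $B$ is also correct: the inductive step works because $H^b(C_\ell,\mathbb{F}_p)$ is at most one-dimensional in each degree, so each $H^b(B_1,V)$ is either zero or a character twist of $V$ and hence remains irreducible of dimension $\ge 2$, and you rightly run the induction over all cohomological degrees simultaneously to absorb the nonvanishing higher rows of the spectral sequence when $\ell=p$.
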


%

\subsubsection{Demushkin groups}

We will use throughout the theory of Demushkin groups as presented in \cite{Lab}.
It is advisable to consult \cite{NSW} and \cite{Ser} as well.

Let $p$ be a prime number, let $n = 2t$ be an even integer exceeding $2$, 
let $G$ be a pro-$p$ Demushkin group with $d(G) = n$, and set $q \defeq q(G)$ (this is $0$ if $G^{\mathrm{ab}}$ is torsion-free, and otherwise equals the order of its torsion subgroup).
We say that we are in the Galois case, 
if $G$ is the Galois group of the maximal $p$-extension of a $p$-adic field $E$.
In this case, $q$ is the highest power of $p$ for which $E$ contains a primitive $q$-th root of unity $\zeta_q$.

Let $\mathbb{Z}_p$ be the ring of $p$-adic integers, 
and let $R$ be the local ring $\mathbb{Z}_p/q\mathbb{Z}_p$ with residue field $\kappa = \mathbb{F}_p$.
We have a presentation for $G$
\begin{equation}
1 \longrightarrow L \longrightarrow F \stackrel{e}{\longrightarrow} G \longrightarrow 1
\end{equation}
where $F$ is a free pro-$p$ group on $n$ generators,
and $L$ is generated (as a normal subgroup of $F$) by some $r \in F^q[F,F]$.
The $q$-central series of $F$ is
\begin{equation}
F_1 \defeq F, \quad F_{k+1} \defeq F_k^q[F_k,F] \quad (k \geq 1).
\end{equation}
As $r \in F_2$, when $q \neq 0$ we can find an $s \in F$ with
\begin{equation}
s^q \equiv r \mod [F,F]
\end{equation}
and set $\sigma \defeq e(s)$.
The values $\eta(\sigma) \in R$ for $\eta \in M \defeq H^1(G,R)$ are independent of our choice of $s$.
Cup product induces a symplectic form on the $R$-module $M$,
so (when $q \neq 0$) there exists a $\chi_\sigma \in M$ such that
\begin{equation} \label{sigmacupEq}
\forall \ \eta \in M \quad \eta \cup \chi_\sigma = \eta(\sigma).
\end{equation}
In the Galois case, cup product corresponds to the Hilbert symbol. 
\cite[Corollary to Proposition 3]{Lab} tells us that (when $q \neq 0$)
\begin{equation} \label{selfcupEq}
\forall \ \eta \in M \quad \eta \cup \eta = \binom{q}{2} \eta(\sigma)
\end{equation}
so if we take a symplectic basis $\chi_1, \chi_2 \dots, \chi_n$ for $M$ over $R$ with
\begin{equation} \label{ActOnSigEq}
\chi_i(\sigma) = \delta_{1,i} \quad (1 \leq i \leq n)
\end{equation}
whose existence is guaranteed by \cite[Proposition 4 (2)]{Lab},
then using \eqref{sigmacupEq} and \eqref{selfcupEq} we can show that the functionals 
\begin{equation} 
\eta \mapsto \eta \cup \chi_2, \quad \eta \mapsto \eta \cup \chi_\sigma
\end{equation}
coincide on the basis. Consequently, nondegeneracy implies that $\chi_2 = \chi_\sigma$ so
\begin{equation} \label{SigmaIsotEq}
\chi_\sigma(\sigma) \stackrel{\ref{sigmacupEq}}{=} \chi_\sigma \cup \chi_\sigma = \chi_2 \cup \chi_\sigma
\stackrel{\ref{sigmacupEq}}{=} \chi_2(\sigma) \stackrel{\ref{ActOnSigEq}}{=} 0.
\end{equation}

Cup product also induces a symplectic form on the vector space 
\begin{equation}
H^1(G, \kappa) \cong M_\kappa.
\end{equation}
Identifying these, 
we get a reduction homomorphism $\rho \colon M \to M_\kappa$ respecting the symplectic forms (see \eqref{RespectEq}).

Associated to $G$ (or to its dualizing module) is a homomorphism 
\begin{equation} \label{chidefeq}
\chi \colon G \to \mathbb{U}_p^{1} \defeq \{a \in \mathbb{Z}_p^* \ | \ a \equiv 1 \ (p)\}
\end{equation}
for which $q$ is the highest power of $p$ with 
\begin{equation}
\mathrm{Im}(\chi) \subseteq  1 + q\mathbb{Z}_p
\end{equation}
where we think of $q = 0$ as $p^\infty$.
Thus, when $q \neq 0$, reduction mod $pq$ maps $\mathrm{Im}(\chi)$ into the subgroup of $(\mathbb{Z}/pq\mathbb{Z})^*$ given by
\begin{equation}
\{b \in (\mathbb{Z}/pq\mathbb{Z})^* : b \equiv 1 \ (q)\} = \{1 + tq : 0 \leq t \leq p-1\}
\end{equation}
which is isomorphic to $(\kappa,+)$ by $1 + tq \mapsto t$.
Hence, we get a homomorphism
\begin{equation}
\tau \colon \mathrm{Im}(\chi) \to (\kappa,+).
\end{equation}
In the Galois case, $\chi$ maps each $g \in G$ to the automorphism that $g$ induces on the multiplicative group of roots of unity in $\bar E$.
  
Rephrasing \cite[Lemma 6]{Son} we get a statement,
the proof of which can also be found in \cite[Corollary 3.9.17]{NSW} and in the corollary to \cite[Proposition 4]{Lab}.

\begin{prop} \label{ApproxProp}

Let $(A_1, B_1, \dots, A_t,B_t)$ be a symplectic basis of $M$, 
and let $\alpha_1, \beta_1, \dots, \alpha_t, \beta_t$ be dual generators of $G$.
Let $w_1, z_1, \dots, w_t, z_t$ be a basis of $F$ with $e(w_i) = \alpha_i, \ e(z_i) = \beta_i$ for $1 \leq i \leq t$.
If $q \neq 0$, suppose in addition that $B_1 = \chi_\sigma$. Then 
\begin{equation}
r \equiv w_1^q[w_1, z_1] \cdots [w_t, z_t] \mod F_3.
\end{equation}

\end{prop}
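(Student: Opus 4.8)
The plan is to read this as a change-of-basis sharpening of Labute's structure theorem for the defining relator of a Demushkin group. Since $r\in F_2=F^q[F,F]$ and $[F_2,F]\subseteq F_3$, the image of $r$ in the abelian group $F_2/F_3$ admits, by the description of $F_2/F_3$ in \cite{Lab}, a presentation
\[
r\equiv \prod_{i=1}^t w_i^{q c_i}z_i^{q d_i}\cdot\prod_{1\leq i<j\leq t}[w_i,w_j]^{e_{ij}}[z_i,z_j]^{f_{ij}}\cdot\prod_{1\leq i,j\leq t}[w_i,z_j]^{g_{ij}}\mod F_3
\]
with coefficients in the appropriate cyclic quotients of $R$, and the point is that this image is completely detected by the two invariants that one can read off directly: the cup-product form on $M=H^1(G,R)$ (which sees the commutator coefficients) and the reduction $F\to F^{\mathrm{ab}}$ (which sees the $q$-th power coefficients). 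The goal is thus to show $c_i=\delta_{1,i}$, every $d_i=0$, $g_{ii}=1$, and $e_{ij}=f_{ij}=g_{ij}=0$ for $i\neq j$.

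First I would compute the commutator coefficients. Since $L=\langle r\rangle^F\subseteq F_2\subseteq\Phi(F)$, inflation identifies $H^1(G,R)$ with $H^1(F,R)$, so $(A_1,B_1,\dots,A_t,B_t)$ is also the basis of $H^1(F,R)$ dual to $(w_1,z_1,\dots,w_t,z_t)$. The formula expressing the cup product of a one-relator pro-$p$ group through the commutator coefficients of its relator modulo $F_3$ — the corollary to \cite[Proposition 4]{Lab}, as well as \cite[Corollary 3.9.17]{NSW} and \cite[Lemma 6]{Son} — then gives, for a suitable identification $H^2(G,R)\cong R$, that $A_i\cup A_j=e_{ij}$ and $B_i\cup B_j=f_{ij}$ for $i<j$, and $A_i\cup B_j=g_{ij}$ for all $i,j$. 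Because $(A_1,B_1,\dots,A_t,B_t)$ is a symplectic basis, \eqref{DefSymBasEq} forces $A_i\cup B_i=1$ and $A_i\cup B_j=A_i\cup A_j=B_i\cup B_j=0$ whenever $i\neq j$, whence $g_{ii}=1$ and all the remaining $e_{ij},f_{ij},g_{ij}$ vanish. So the commutator part of $r$ is congruent to $[w_1,z_1]\cdots[w_t,z_t]$ modulo $F_3$, the order being irrelevant as $F_2/F_3$ is abelian.

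It then remains to pin down the $q$-th power part, and this is exactly where the hypothesis $B_1=\chi_\sigma$ enters (when $q=0$ there is no power part and the proof is complete). Reducing the displayed congruence modulo $[F,F]$ kills the commutators, so the class of $r$ in $F^{\mathrm{ab}}\cong\mathbb{Z}_p^n$ is $q\big(\sum_i c_i\bar w_i+\sum_i d_i\bar z_i\big)$; as $s^q\equiv r\mod[F,F]$ and $F^{\mathrm{ab}}$ is torsion-free, $\bar s=\sum_i c_i\bar w_i+\sum_i d_i\bar z_i$, hence $\sigma=e(s)$ has class $\sum_i c_i\bar\alpha_i+\sum_i d_i\bar\beta_i$ in $G^{\mathrm{ab}}$, and by the duality of $(\alpha_i,\beta_i)$ with $(A_i,B_i)$ we get $A_j(\sigma)=c_j$ and $B_j(\sigma)=d_j$ in $R$. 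Now \eqref{sigmacupEq} together with $\chi_\sigma=B_1$ and the symplectic relations \eqref{DefSymBasEq} give
\[
c_j=A_j(\sigma)=A_j\cup\chi_\sigma=A_j\cup B_1=\delta_{1,j},
\]
and, for $j\neq 1$, $d_j=B_j(\sigma)=B_j\cup B_1=0$, while $d_1=B_1(\sigma)=\chi_\sigma(\sigma)=0$ by \eqref{SigmaIsotEq}. Hence the $q$-th power part of $r$ is exactly $w_1^q$, and combining this with the previous step yields $r\equiv w_1^q[w_1,z_1]\cdots[w_t,z_t]\mod F_3$.

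The routine heart of the argument — and the reason the statement is offered here as a reformulation of results already in the literature — is the bookkeeping underlying the first two steps: identifying the structure of $F_2/F_3$ together with the correct coefficient rings for the power and commutator terms (for $q=p$ this is just the degree-$2$ part of the free restricted Lie algebra over $\mathbb{F}_p$, but for $q=p^c$ with $c\geq 2$ one must invoke Labute's analysis), fixing the normalization of $H^2(G,R)\cong R$ so that a symplectic pair reads off with cup product exactly $1$ rather than a unit multiple, and checking that the $q$-th power coefficients detected after reduction modulo $[F,F]$ coincide with those detected modulo $F_3$. All of this is carried out in \cite{Lab} and \cite{NSW}.
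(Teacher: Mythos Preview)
Your argument is correct and is exactly the standard one: the paper does not give its own proof of this proposition but simply cites \cite[Lemma 6]{Son}, \cite[Corollary 3.9.17]{NSW}, and the corollary to \cite[Proposition 4]{Lab}, and what you have written is a faithful unpacking of that argument---reading off the commutator coefficients of $r$ in $F_2/F_3$ from the cup product via Labute's formula, and the $q$-th power coefficients from $\chi_\sigma$ via \eqref{sigmacupEq} and \eqref{SigmaIsotEq}. Your closing paragraph correctly flags the bookkeeping (the structure of $F_2/F_3$ and the normalization of $H^2(G,R)\cong R$) that makes this a citation rather than a self-contained proof.
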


Following the argument in the proof of \cite[Theorem 3]{Lab} reproduced also in the proofs of \cite[Theorem 7]{Son} and \cite[Theorem 3.9.11, Lemma 3.9.18]{NSW} we extract the following assertion.

\begin{prop} \label{ExactProp}

Let $w_1, z_1, \dots, w_t, z_t$ be a basis of $F$ for which
\begin{equation}
r \equiv w_1^q[w_1, z_1] \cdots [w_t, z_t] \mod F_3.
\end{equation}
Then there exists a basis $x_1, y_1, \dots, x_t, y_t$ of $F$ and $\gamma, \delta \in \mathbb{Z}_p$ such that 
\begin{equation}
\begin{split}
&r = x_1^\gamma[x_1, y_1]x_2^\delta[x_2,y_2][x_3,y_3] \cdots [x_t, y_t], \\
&\forall \ 1 \leq i \leq t \quad x_i \equiv w_i, \ y_i \equiv z_i \mod F_2.
\end{split}
\end{equation}

\end{prop}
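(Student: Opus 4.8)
The plan is to run the successive-approximation argument underlying Labute's classification of Demushkin relators (\cite[Theorem 3]{Lab}; see also \cite[Theorem 7]{Son} and \cite[Theorem 3.9.11, Lemma 3.9.18]{NSW}), and I would freely use the following standard facts about the free pro-$p$ group $F$. The $q$-central filtration $\{F_k\}$ (for $q = 0$ read the lower $p$-central series) consists of open subgroups, is separated, $\bigcap_k F_k = 1$, and $F$ is complete for the induced topology; each $F_k/F_{k+1}$ is a finite abelian $p$-group, and the graded object $\mathrm{gr}(F) = \bigoplus_{k\geq 1}\mathrm{gr}_k(F)$, $\mathrm{gr}_k(F) \defeq F_k/F_{k+1}$, is a free Lie algebra over $R$ on the images of the generators (Labute, Lazard), in particular generated by its degree-one part $\mathrm{gr}_1(F) = F/F_2$; and a $2t$-element subset of $F$ is a basis exactly when it generates $F$, equivalently when its image generates $F/\Phi(F)$ (the pro-$p$ Frattini argument, \cite[Corollary 2.8.5]{RZ}), so a basis stays a basis under any modification by elements of $F_2 \subseteq \Phi(F)$, with unchanged image modulo $F_2$.

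I would prove by induction on $k \geq 2$ the assertion $(\ast_k)$: there are a basis $x_1^{(k)}, y_1^{(k)}, \dots, x_t^{(k)}, y_t^{(k)}$ of $F$ with $x_i^{(k)} \equiv w_i$ and $y_i^{(k)} \equiv z_i$ modulo $F_2$, and $\gamma_k, \delta_k \in \mathbb{Z}_p$, such that
\[
r \equiv (x_1^{(k)})^{\gamma_k}[x_1^{(k)}, y_1^{(k)}]\,(x_2^{(k)})^{\delta_k}[x_2^{(k)}, y_2^{(k)}][x_3^{(k)}, y_3^{(k)}]\cdots[x_t^{(k)}, y_t^{(k)}] \pmod{F_{k+1}},
\]
and such that the step from $(\ast_{k-1})$ to $(\ast_k)$ (for $k \geq 3$) changes each generator only by an element of $F_{k-1}$ and changes $\gamma, \delta$ only by elements of $q^{k-1}\mathbb{Z}_p$, so that all the data are Cauchy. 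The base case $(\ast_2)$ is precisely the hypothesis, with $\gamma_2 \defeq q$ and $\delta_2 \defeq 0$.

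For the inductive step, write $R_k$ for the standard word on the right of $(\ast_k)$ and put $u_k \defeq R_k^{-1}r \in F_{k+1}$; the task is to annihilate the class of $u_k$ in $\mathrm{gr}_{k+1}(F)$. Replacing $x_i^{(k)} \leftarrow x_i^{(k)}a_i$ and $y_i^{(k)} \leftarrow y_i^{(k)}b_i$ with $a_i, b_i \in F_k$, a commutator computation shows that $R_k$ is multiplied, modulo $F_{k+2}$, by $\prod_i [a_i, y_i^{(k)}]\,[x_i^{(k)}, b_i]$ together with the effect of the substitutions in $x_1, x_2$ on the power terms $(x_1^{(k)})^{\gamma_k}$ and $(x_2^{(k)})^{\delta_k}$; in $\mathrm{gr}(F)$ the commutator part reads $\sum_i\big([\overline{a_i},\overline{z_i}] + [\overline{w_i},\overline{b_i}]\big)$, and as $\overline{a_i},\overline{b_i}$ range over $\mathrm{gr}_k(F)$ this sweeps out all of $[\mathrm{gr}_1(F),\mathrm{gr}_k(F)]$, the bracket part of $\mathrm{gr}_{k+1}(F)$, because $\{\overline{w_i},\overline{z_i}\}$ spans $\mathrm{gr}_1(F)$ and $\mathrm{gr}(F)$ is generated in degree one. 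The remaining $p$-power directions of $\mathrm{gr}_{k+1}(F)$ — present only when $k+1$ is the relevant power of $p$ — are covered by the power-term effects of these same substitutions together with adjustments of $\gamma_k$ and, when $p = 2$, of $\delta_k$ within $q^{k}\mathbb{Z}_p$. The crucial point is that the class of $u_k$ always lies in the subspace so reached: this is where one uses that the quadratic part of $r$ is the \emph{nondegenerate} symplectic form $\sum_i[w_i,z_i]$, equivalently that $G$ is Demushkin, and it is exactly Labute's analysis of the $q$-central series of a one-relator pro-$p$ group with nondegenerate quadratic relator, i.e.\ of $\mathrm{gr}(F)$ modulo the Lie ideal generated by the leading form of $r$. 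Choosing the $a_i, b_i$ and the increments of $\gamma_k,\delta_k$ accordingly and substituting yields $(\ast_{k+1})$: the generators have moved only within $F_k \subseteq \Phi(F)$, so they still form a basis reducing to $(w_i,z_i)$ modulo $F_2$, and $\gamma,\delta$ have moved only $q$-adically.

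Finally, completeness of $F$ and of $\mathbb{Z}_p$ makes the Cauchy sequences $x_i^{(k)}, y_i^{(k)}, \gamma_k, \delta_k$ converge to $x_i, y_i \in F$ and $\gamma,\delta \in \mathbb{Z}_p$; these satisfy $x_i \equiv w_i$, $y_i \equiv z_i \pmod{F_2}$, form a basis (their image in $F/\Phi(F)$ is that of $(w_i,z_i)$), and the element $\big(x_1^\gamma[x_1,y_1]x_2^\delta[x_2,y_2][x_3,y_3]\cdots[x_t,y_t]\big)^{-1}r$ lies in $\bigcap_{k\geq 2}F_{k+1} = 1$, which is the asserted identity. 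The main obstacle is the inductive step, and specifically the claim that the class of $u_k$ remains inside the span of the admissible modifications at every level; establishing this is the heart of Labute's argument — it requires the precise description of $F_{k+1}/F_{k+2}$ for the one-relator quotient via $\mathrm{gr}(F)$ modulo the Lie ideal generated by the nondegenerate leading form of $r$ — and it is here that the separate case $p = q = 2$ appears and forces one to carry the second power exponent $\delta$ on $x_2$.
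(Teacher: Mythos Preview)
Your proposal is correct and follows exactly the route the paper takes: the paper does not give an independent proof of this proposition but simply extracts it from the successive-approximation argument in \cite[Theorem 3]{Lab} (reproduced in \cite[Theorem 7]{Son} and \cite[Theorem 3.9.11, Lemma 3.9.18]{NSW}), and your sketch is a faithful outline of that very argument, including the correct identification of the nondegeneracy of the quadratic part of $r$ as the mechanism that keeps the error $u_k$ within reach at each level and of the $p=q=2$ case as the source of the extra exponent $\delta$.
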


The symplectic basis needed for \propref{ApproxProp} is provided by a combination of \eqref{SigmaIsotEq}, \propref{CompProp}, and \propref{LiftProp}
(alternatively, one can take the basis $\chi_1, \dots, \chi_n$ from \cite[Proposition 4 (2)]{Lab} discussed above).
Then, \propref{ApproxProp} and \propref{ExactProp} provide a basis for $F$ such that
\begin{equation} \label{ActEq}
\begin{split}
&\forall \ 1 \leq i \leq t \quad \chi_\sigma\big(e(x_i)\big) = \chi_\sigma\big(e(w_i)\big) = \chi_\sigma(\alpha_i) = B_1(\alpha_i) = 0, \\ 
&\forall \ 1 \leq i \leq t \quad \chi_\sigma\big(e(y_i)\big) = \chi_\sigma\big(e(z_i)\big) = \chi_\sigma(\beta_i) = B_1(\beta_i) = \delta_{1,i},
\end{split}
\end{equation}
so we have an explicit description of $\chi_\sigma$ (as opposed to \eqref{sigmacupEq}).
An explicit description of $\chi$ from \eqref{chidefeq} is given in the proof of \cite[Theorem 4]{Lab}.  
It follows from these descriptions that
\begin{equation} \label{DescrEq}
\rho(\chi_\sigma) = \tau \circ \chi.
\end{equation}
In the Galois case,
this means that the fixed field of $\mathrm{Ker}\big(\rho(\chi_\sigma)\big)$ is $E(\zeta_{pq})$.

\begin{cor} \label{IsoCor}

For every isotropic subspace $N$ of $M_\kappa$ that contains $\rho(\chi_\sigma)$ (if $q \neq 0$),
there exists a basis $x_1, y_1, \dots, x_t, y_t$ of $F$ and $\gamma, \delta \in \mathbb{Z}_p$ such that
\begin{equation}
\begin{split}
&r = x_1^\gamma[x_1, y_1]x_2^\delta[x_2,y_2][x_3,y_3] \cdots [x_t, y_t], \\
&\forall \psi \in N \ \forall \ 1 \leq i \leq t \quad \psi\big(e(x_i)\big) = 0.
\end{split}
\end{equation}

\end{cor}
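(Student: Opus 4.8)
The plan is to pick a symplectic basis of $M$ adapted to $N$ and run its dual generators through \propref{ApproxProp} and \propref{ExactProp}. First I would dispose of the case $N = 0$: this forces $q = 0$, since if $q \neq 0$ the hypothesis puts $\rho(\chi_\sigma) = \rho(\chi_2)$ into $N$, and $\rho(\chi_2) \neq 0$ because $\chi_\sigma = \chi_2$ lies in a symplectic basis of $M$ over $R$ and hence reduces to a nonzero (indeed basis) element of $M_\kappa$. When $N = 0$ the second asserted condition is vacuous, while the first is exactly what \propref{ApproxProp} (no hypothesis on $B_1$ is needed since $q = 0$) and \propref{ExactProp} give when applied to any symplectic basis of $M$ over $R$ (one exists by \propref{SympBaseExProp} and \propref{LiftProp}) together with its dual generators. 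So from now on assume $N \neq 0$ and fix $0 \neq b_1 \in N$, choosing $b_1 \defeq \rho(\chi_\sigma)$ whenever $q \neq 0$ (legitimate, as then $\rho(\chi_\sigma) \in N$ and $\rho(\chi_\sigma) \neq 0$).

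Apply \propref{CompProp} to $N$ and $b_1$ to get a symplectic basis $(a_1,b_1,\dots,a_t,b_t)$ of $M_\kappa$ with $N \subseteq \mathrm{Span}_\kappa\{b_1,\dots,b_t\}$, and lift it via \propref{LiftProp} --- starting from $B_1 \defeq \chi_\sigma$ when $q \neq 0$ (permitted since $\rho(\chi_\sigma) = b_1$) and from an arbitrary preimage of $b_1$ otherwise --- to a symplectic basis $C = (A_1,B_1,\dots,A_t,B_t)$ of $M$ over $R$ with $\rho(C) = (a_1,b_1,\dots,a_t,b_t)$. Let $\alpha_1,\beta_1,\dots,\alpha_t,\beta_t$ be the dual generators of $G$, and choose a basis $w_1,z_1,\dots,w_t,z_t$ of $F$ with $e(w_i) = \alpha_i$ and $e(z_i) = \beta_i$ (possible because the $\alpha_i,\beta_i$ minimally generate $G$). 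Since $B_1 = \chi_\sigma$ in the case $q \neq 0$, \propref{ApproxProp} yields $r \equiv w_1^q[w_1,z_1]\cdots[w_t,z_t] \bmod F_3$, and then \propref{ExactProp} produces a basis $x_1,y_1,\dots,x_t,y_t$ of $F$ and $\gamma,\delta \in \mathbb{Z}_p$ with $r$ of the stated form and $x_i \equiv w_i$, $y_i \equiv z_i \bmod F_2$ for all $i$.

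It remains to verify $\psi\big(e(x_i)\big) = 0$ for every $\psi \in N$ and every $i$, and this is the crux. Since $x_i w_i^{-1} \in F_2 = F^q[F,F]$ and $G^q \leq G^p$ (trivially so when $q = 0$), we have $e(x_i w_i^{-1}) \in G^q[G,G] \leq \Phi(G)$; as every $\psi \in M_\kappa = H^1(G,\kappa)$ factors through $G/\Phi(G)$, it follows that $\psi\big(e(x_i)\big) = \psi\big(e(w_i)\big) = \psi(\alpha_i)$. Now $\psi \in N \subseteq \mathrm{Span}_\kappa\{b_1,\dots,b_t\}$, and for each $j$ one has $b_j(\alpha_i) = \rho\big(B_j(\alpha_i)\big) = 0$ because $\alpha_i$ is dual to $A_i$ and pairs trivially with every $B_j$ (the relation $B_1(\alpha_i) = 0$ used in \eqref{ActEq}); hence $\psi(\alpha_i) = 0$. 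The delicate point is precisely this combination: \propref{CompProp} places $N$ inside the span of the ``$B$-half'' of the basis, the dual-generators convention makes the $B_j$ annihilate the $\alpha_i$, and although \propref{ExactProp} only pins down $x_i$ modulo $F_2$, that suffices because $M_\kappa$-functionals kill the image of $F_2$ in $G$. Everything else is just assembling the previously established propositions.
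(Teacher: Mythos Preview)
Your proof is correct and follows essentially the same route as the paper: apply \propref{CompProp} to place $N$ in the span of the $b_j$'s with $b_1 = \rho(\chi_\sigma)$ when $q \neq 0$, lift via \propref{LiftProp} so that $B_1 = \chi_\sigma$, pass to dual generators and a lift to $F$, then run \propref{ApproxProp} and \propref{ExactProp} and read off $\psi(e(x_i)) = b_j(\alpha_i) = 0$ from the dual pairing and the congruence $x_i \equiv w_i \bmod F_2$. The only difference is cosmetic: you treat the degenerate case $N = 0$ separately and spell out why $\rho(\chi_\sigma) \neq 0$ and why $M_\kappa$-functionals kill the image of $F_2$, whereas the paper leaves these points implicit.
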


\begin{proof}

By \eqref{SigmaIsotEq} and \propref{CompProp}, 
there exists a symplectic basis 
\begin{equation} \label{FirstBasEq}
(a_1,b_1, \dots, a_t, b_t)
\end{equation}
of $M_\kappa$ such that
\begin{equation} \label{NSubEq}
b_1 = \rho(\chi_\sigma) \ (\mathrm{if} \ q \neq 0), \quad N \subseteq \mathrm{Span}_\kappa \{b_1, \dots, b_t\}.
\end{equation}
Using \propref{LiftProp}, 
we lift \eqref{FirstBasEq} to a symplectic basis
\begin{equation} \label{SecBasEq}
(A_1, B_1, \dots, A_t, B_t)
\end{equation}
of $M$ with $B_1 = \chi_\sigma$ in case that $q \neq 0$.
Let $\alpha_1, \beta_1, \dots, \alpha_t, \beta_t$ be dual generators of $G$.
This means that
\begin{equation} \label{PresEq}
\forall \ 1 \leq i,j \leq t \quad B_j(\alpha_i) = A_j(\beta_i) = 0, \ A_j(\alpha_i) = B_j(\beta_i) = \delta_{i,j}.
\end{equation}
Let $w_1, z_1, \dots, w_t, z_t$ be a basis of $F$ such that
\begin{equation} \label{AWEq}
\forall \ 1 \leq i \leq t \quad e(w_i) = \alpha_i, \ e(z_i) = \beta_i.
\end{equation}
By \propref{ApproxProp}, we have
\begin{equation}
r \equiv w_1^q[w_1, z_1] \cdots [w_t, z_t] \mod F_3.
\end{equation} 
By \propref{ExactProp}, there exists a basis $x_1, y_1, \dots, x_t, y_t$ of $F$ such that 
\begin{equation} \label{EquivXWEq}
r = x_1^\gamma[x_1, y_1]x_2^\delta \cdots [x_t, y_t], \quad \forall \ 1 \leq i \leq t \quad x_i \equiv w_i \mod F_2 .
\end{equation}
Consequently, for all $1 \leq i, j \leq t$ we have
\begin{equation}
b_j\big(e(x_i)\big) \stackrel{\ref{EquivXWEq}}{=} 
b_j\big(e(w_i)\big) \stackrel{\ref{AWEq}}{=} 
b_j(\alpha_i) \stackrel{\ref{SecBasEq}}{=} 
\rho\big(B_j(\alpha_i)\big) \stackrel{\ref{PresEq}}{=} 0
\end{equation}
so we are done in view of \eqref{NSubEq}.
\end{proof}
In the Galois case, the corollary means that one can find generators for $G$ (that satisfy a Demushkin relation)
with a prescribed action on carefully chosen elements of $\bar E$.

A proper open subgroup $U$ of $G$ is also a Demushkin group with
\begin{equation} \label{DemRankEq}
d(U) = \big(d(G)-2\big)[G : U] + 2
\end{equation}
so $d(U)$ is even as well.
We denote the invariants of $U$ by 
\begin{equation}
q^U, M_\kappa^U, \chi_\sigma^U, \chi^U, \tau^U,
\end{equation} 
while the invariants of $G$ will be denoted by 
\begin{equation}
q^G, M_\kappa^G, \chi_\sigma^G, \chi^G, \tau^G
\end{equation}
in order to avoid ambiguity.
From \cite[Proposition 18]{Ser}, we get that 
\begin{equation} \label{chiresEq}
\chi^U = \chi^G|_U
\end{equation}
which is obvious in the Galois case.

We denote by $\mathrm{res}^G_U$ the restriction map from $ M_\kappa^G$ to $M_\kappa^U$.
The subspace $N$ to which we will apply \corref{IsoCor} is given by the following.

\begin{cor} \label{IsotrCor}

The image of $\mathrm{res}^G_U$ is isotropic.

\end{cor}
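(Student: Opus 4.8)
The plan is to reduce the assertion to a statement about the restriction map in degree $2$ cohomology and then exploit the index formula for the corestriction. Recall that the symplectic form on $M_\kappa^U = H^1(U,\kappa)$ is the cup product pairing with values in $H^2(U,\kappa) \cong \kappa$, so, by naturality of the cup product, for all $\eta_1,\eta_2 \in M_\kappa^G$ we have
\begin{equation*}
\mathrm{res}^G_U(\eta_1) \cup \mathrm{res}^G_U(\eta_2) = \mathrm{res}^G_U(\eta_1 \cup \eta_2).
\end{equation*}
Hence it is enough to show that the restriction map $H^2(G,\kappa) \to H^2(U,\kappa)$ is the zero map: then every pair of elements of $\mathrm{Im}(\mathrm{res}^G_U) \subseteq M_\kappa^U$ pairs to $0$, which is precisely isotropy.

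To prove that this degree $2$ restriction vanishes I would use the standard identity $\mathrm{cor}^U_G \circ \mathrm{res}^G_U = [G:U]\cdot\mathrm{id}$ on $H^2(G,\kappa)$. Since $U$ is a proper open subgroup of the pro-$p$ group $G$, the index $[G:U]$ is a power of $p$ greater than $1$, hence $[G:U] = 0$ in $\kappa = \mathbb{F}_p$, so $\mathrm{cor}^U_G \circ \mathrm{res}^G_U = 0$. On the other hand $H^2(G,\kappa)$ and $H^2(U,\kappa)$ are both one-dimensional over $\kappa$, because $G$ is Demushkin and, by \eqref{DemRankEq}, so is $U$. Since $\mathrm{cd}_p(U) = 2$, the corestriction $\mathrm{cor}^U_G \colon H^2(U,\kappa) \to H^2(G,\kappa)$ is surjective (see \cite{NSW}), and a surjection between one-dimensional $\kappa$-spaces is an isomorphism, in particular injective. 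Composing with the previous relation forces $\mathrm{res}^G_U = 0$ on $H^2$, and combining this with the displayed naturality identity gives $\mathrm{res}^G_U(\eta_1) \cup \mathrm{res}^G_U(\eta_2) = 0$ for all $\eta_1,\eta_2 \in M_\kappa^G$, as required.

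The only input here that is not pure diagram chasing with functorial properties of cohomology is the injectivity of $\mathrm{cor}^U_G$ in top degree (equivalently, surjectivity of corestriction in cohomological dimension $2$); this is where I expect the substance of the argument to lie, and if one prefers to stay self-contained it can be read off from the explicit description, due to Serre and Labute (\cite{Ser}, \cite{Lab}), of open subgroups of Demushkin groups, or from Poincaré duality for these groups. Everything else — naturality of the cup product, the index formula for $\mathrm{cor}\circ\mathrm{res}$, and the one-dimensionality of the relevant $H^2$'s — is standard.
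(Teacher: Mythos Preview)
Your proof is correct and follows essentially the same approach as the paper. The paper simply cites the vanishing of the degree-$2$ restriction map (as an exercise in \cite{Ser}) together with the naturality of the cup product from \cite{NSW}; you reproduce this outline and additionally unpack the Serre exercise via the identity $\mathrm{cor}^U_G \circ \mathrm{res}^G_U = [G:U]\cdot\mathrm{id}$ combined with the surjectivity (hence bijectivity) of corestriction in top cohomological degree, which is exactly the intended argument.
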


The validity of this statement follows at once from the vanishing of the restriction map in the cohomology of a Demushkin group for dimension $2$ mentioned in \cite[Exercise 4.5.5 (a)]{Ser},
and from \cite[Proposition 1.5.3 (iii)]{NSW} telling us that cup product commutes with the restriction map.
We encourage the reader to translate \corref{IsotrCor} to the Galois case,
thus obtaining a vanishing result for the Hilbert symbol. 

In order to apply \corref{IsoCor} when $q \neq 0$, we need the following claim.

\begin{prop} \label{HeredProp}

Let $K$ be a subgroup of $G$ such that $d(G) > d(K) + 2$, and suppose that $q^G \neq 0$. 
Then there exists a proper open subgroup $U$ of $G$ containing $K$,
such that $\rho(\chi_\sigma^U) \in \mathrm{Im}\big(\mathrm{res}^G_U)$.

\end{prop}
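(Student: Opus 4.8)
The plan is to take $U$ to be a subgroup of index $p$ in $G$, exploiting the identity $\rho(\chi_\sigma) = \tau \circ \chi$ from \eqref{DescrEq} (valid in every Demushkin group with $q \neq 0$) together with $\chi^U = \chi^G|_U$ from \eqref{chiresEq}. First I would set $\psi_0 \defeq \rho(\chi_\sigma^G) = \tau^G \circ \chi^G$, an element of $M_\kappa^G = H^1(G,\kappa) = \mathrm{Hom}(G,\kappa)$, and let $H_0 \defeq \Ker(\psi_0)$. Since $q^G$ is the \emph{largest} power of $p$ with $\mathrm{Im}(\chi^G) \subseteq 1 + q^G\mathbb{Z}_p$, some $g \in G$ satisfies $\chi^G(g) \not\equiv 1 \pmod{pq^G}$, i.e.\ $\psi_0(g) \neq 0$; hence $\psi_0 \neq 0$ and $[G : H_0] = p$.

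Next I would choose $U$. As $\psi_0$ and the Frattini map both kill $\Phi(G)$, the subgroup $H_0$ and the subgroup $K$ determine, inside the Frattini quotient $V \defeq G/\Phi(G) \cong \kappa^{d(G)}$, a hyperplane $\overline{H_0}$ and a subspace $\overline{K}$ of dimension at most $d(K)$; the hypothesis $d(G) > d(K) + 2$ gives in particular $\dim_\kappa \overline{K} \leq d(G) - 2$, so $\overline K$ has codimension at least $2$ in $V$. The hyperplanes of $V$ containing such a subspace number at least $p + 1 \geq 3$, so I can pick one, say $\overline W$, with $\overline K \subseteq \overline W$ and $\overline W \neq \overline{H_0}$; let $U$ be its preimage in $G$. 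Then $U$ is open of index $p$ in $G$, it is proper, it contains $K\Phi(G) \supseteq K$, and since distinct hyperplanes of $V$ are not nested we get $U \not\subseteq H_0$, that is, $\psi_0|_U \neq 0$.

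Then I would pin down the invariant $q^U$. From $\psi_0|_U \neq 0$ there is $u \in U$ with $\chi^G(u) \not\equiv 1 \pmod{pq^G}$, so $\mathrm{Im}(\chi^U) = \chi^G(U)$ is not contained in $1 + pq^G\mathbb{Z}_p$; on the other hand $\mathrm{Im}(\chi^U) \subseteq \mathrm{Im}(\chi^G) \subseteq 1 + q^G\mathbb{Z}_p$. Since $q^U$ is by definition the largest power of $p$ with $\mathrm{Im}(\chi^U) \subseteq 1 + q^U\mathbb{Z}_p$, these two facts force $q^U = q^G$; in particular $q^U \neq 0$, so \eqref{DescrEq} applies to $U$ as well, and $\tau^U$ is exactly the restriction of $\tau^G$ to $\mathrm{Im}(\chi^U) \subseteq \mathrm{Im}(\chi^G)$ (both being induced by reduction mod $pq^G$ followed by $1 + tq^G \mapsto t$). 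Therefore
\begin{equation*}
\rho(\chi_\sigma^U) = \tau^U \circ \chi^U = \tau^G \circ (\chi^G|_U) = (\tau^G \circ \chi^G)|_U = \psi_0|_U = \mathrm{res}^G_U(\psi_0),
\end{equation*}
so $\rho(\chi_\sigma^U) \in \mathrm{Im}(\mathrm{res}^G_U)$, as required.

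The only delicate point is the behaviour of $q$ under passage to $U$: the construction is arranged precisely so that $q^U = q^G$, which is why $U$ must avoid the hyperplane $H_0$, and this is exactly where the rank hypothesis (guaranteeing that $\overline K$ has codimension $\geq 2$, hence that there is room to dodge $\overline{H_0}$) enters. Once $q^U = q^G$ is secured, the compatibility of $\tau^U$ with $\tau^G$ and the concluding chain of equalities are immediate, and no long exact sequence or spectral sequence is involved — only elementary linear algebra over $\mathbb{F}_p$.
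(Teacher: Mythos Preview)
Your argument is correct, and its concluding step coincides with the paper's: once $q^U=q^G$ one has $\tau^U=\tau^G|_{\mathrm{Im}(\chi^U)}$, whence $\rho(\chi_\sigma^U)=\tau^U\circ\chi^U=(\tau^G\circ\chi^G)|_U=\mathrm{res}^G_U\big(\rho(\chi_\sigma^G)\big)$. The construction of $U$, however, differs. The paper picks $g_1,g_2\in G$ with $\langle\chi^G(g_1),\chi^G(g_2)\rangle=\mathrm{Im}(\chi^G)$ (two elements suffice since every closed subgroup of $\mathbb{U}_p^1$ is topologically $2$-generated), sets $Q=\langle K,g_1,g_2\rangle$, notes that $d(Q)\le d(K)+2<d(G)$ so $Q$ is proper, and lets $U$ be any proper open subgroup containing $Q$; this forces $\mathrm{Im}(\chi^U)=\mathrm{Im}(\chi^G)$ outright. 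You instead select $U$ of index $p$ containing $K$ with $U\not\subseteq\Ker(\psi_0)$, obtaining only the weaker (but sufficient) fact that $\mathrm{Im}(\chi^U)\not\subseteq 1+pq^G\mathbb{Z}_p$. Your route is a bit more economical---it produces $U$ of index exactly $p$ and in fact goes through under the weaker hypothesis $d(G)\ge d(K)+2$---while the paper's route makes transparent why the constant $2$ appears in the hypothesis: it is the number of topological generators needed for a closed subgroup of $\mathbb{U}_p^1$.
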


\begin{proof}
Since $\mathrm{Im}(\chi^G) \leq_c \mathbb{U}_p^1$ there exist $g_1,g_2 \in G$ with
\begin{equation} \label{g1g2Eq}
\big\langle \chi^G(g_1), \chi^G(g_2) \big\rangle = \mathrm{Im}(\chi^G).
\end{equation}
In light of our assumption on the number of generators,
the subgroup 
\begin{equation} \label{subQDefEq}
Q \defeq \big\langle K \cup \{g_1, g_2\} \big\rangle
\end{equation}
is proper,
so there exists a proper open subgroup $U$ containing $Q$.
By \eqref{chiresEq},
\begin{equation}
\mathrm{Im}(\chi^U) \stackrel{\ref{chiresEq}}{=} \mathrm{Im}(\chi^G|_U) \supseteq \mathrm{Im}(\chi^G|_Q) 
\stackrel{\ref{g1g2Eq}}{=} \mathrm{Im}(\chi^G)
\end{equation}
and thus $\mathrm{Im}(\chi^U) = \mathrm{Im}(\chi^G)$.
Therefore, $q^G = q^U$ and $\tau^G = \tau^U$ so we have
\begin{equation}
\rho(\chi_\sigma^U) \stackrel{\ref{DescrEq}}{=} 
\tau^U \circ \chi^U = 
\tau^G \circ \chi^U \stackrel{\ref{chiresEq}}{=}
\mathrm{res}^G_U(\tau^G \circ \chi^G).
\end{equation}
\end{proof}

In the Galois case, the subgroup $U$ corresponds to a finite extension $E_0$ of $E$ contained in the fixed field of $K$,
such that $\zeta_{pq} \notin E_0$. 

\subsection{The proof of \thmref{FirstRes}}

We may assume that $d(G)$ is even,
since this is always the case for $q \neq 2$, 
and otherwise follows from \eqref{DemRankEq} after replacing $G$ by a subgroup of index $2$ containing $K$.
Since $K$ is a finitely generated pro-$p$ group, its Frattini quotient $K/\Phi(K)$ is finite,
so it follows from \cite[Lemma 1.2.5 (c)]{FJ} that (after passing to an open subgroup of $G$ containing $K$)
we can extend the quotient map $\varphi \colon K \to K/\Phi(K)$ to a homomorphism $\theta \colon G \to K/\Phi(K)$.
Upon passing to an open subgroup once again,
we may assume in view of \eqref{DemRankEq} that $d(G) > d(K) + 2$,
so by \propref{HeredProp} there exists a proper open subgroup $U$ of $G$ containing $K$ with 
\begin{equation} \label{PastEq}
\rho(\chi_\sigma^U) \in \mathrm{Im}\big(\mathrm{res}^G_U) \quad (\mathrm{if} \ q \neq 0).
\end{equation}

Set
\begin{equation}
\begin{split}
&\lambda \defeq \theta|_U, \quad \lambda^* \defeq \mathrm{inf}^{U/\Ker (\lambda)}_U, \quad 
\lambda^* \colon H^1(K/\Phi(K),\kappa) \to M_\kappa^U, \\
&N \defeq \mathrm{Im}(\lambda^*) \ (\mathrm{if} \ q =0), \
N \defeq \mathrm{Im}(\lambda^*) + \mathrm{Span}_\kappa \{\rho(\chi_\sigma^U)\} \ (\mathrm{otherwise}).
\end{split}
\end{equation}
Using the fact that $\lambda = \theta|_U$ and \eqref{PastEq} we see that
$N \subseteq \mathrm{Im}\big(\mathrm{res}^G_U)$, 
so $N$ is isotropic by \corref{IsotrCor}.
Hence, \corref{IsoCor} gives us a presentation 
\begin{equation}
\langle x_1, y_1, \dots, x_{t}, y_t \ | \ x_1^\gamma[x_1, y_1]x_2^\delta[x_2,y_2][x_3,y_3] \cdots [x_{t}, y_{t}] = 1 \rangle
\end{equation}
for $U$, such that for every $\psi \in \mathrm{Im}(\lambda^*)$ and $1 \leq i \leq t$ we have $\psi(x_i) = 0$.
This means that for every $\xi \in H^1(K/\Phi(K), \kappa)$ we have 
\begin{equation}
\forall \ 1 \leq i \leq t \quad \xi\big(\lambda(x_i)\big) = \big(\lambda^*(\xi)\big)(x_i) = 0.
\end{equation}
A vector in $K/\Phi(K)$ killed by all functionals (elements of $H^1\big(K/\Phi(K), \kappa \big)$) is zero, 
so $\lambda(x_i) = 0$ for all $1 \leq i \leq t$. 

By \lemref{EmbProbLem}, in order to show that $K$ is a retract of $U$, 
we need to find a weak solution $\mu$ to the following embedding problem
\begin{equation*}
\begin{tikzpicture}[scale=1.5]
\node (B) at (1.4,1) {$U$};
\node (C) at (0,0) {$K$};
\node (D) at (1.4,0) {$K/\Phi(K).$};
\path[->,font=\scriptsize,>=angle 90]
(B) edge node[right]{$\lambda$} (D)
(C) edge node[above]{$\varphi$} (D);
\end{tikzpicture}
\end{equation*} 
For all $1 \leq i \leq t$ define $\mu(x_i)$ to be $1 \in K$ and $\mu(y_i)$ to be an arbitrary element of $\varphi^{-1}\big(\lambda(y_i)\big)$. 
Then
\begin{equation}
\mu(x_1)^\gamma[\mu(x_1), \mu(y_1)]\mu(x_2)^\delta[\mu(x_2),\mu(y_2)] \cdots [\mu(x_t),\mu(y_t)] = 1
\end{equation}
so we can extend $\mu$ (uniquely) to a homomorphism from $U$ to $K$.
We have
\begin{equation}
\forall \ 1 \leq i \leq t \quad \varphi\big(\mu(x_i)\big) = \lambda(x_i), \ \varphi\big(\mu(y_i)\big) = \lambda(y_i)
\end{equation}
and we conclude that $\varphi \circ \mu = \lambda$ so $\mu$ is a weak solution.

\subsection{Consequences of \thmref{FirstRes}}

Note that \thmref{FirstRes} also gives a partial positive answer to \cite[Question 9.2]{KZ} on virtual retraction in groups from $\mathcal{L}$.
In order to prove \corref{ImmCor} we need the following claim.

\begin{prop} \label{intersectProp}

Let $G$ be a Demushkin group with $d(G) > 2$.
Then the intersection of any finite family of nontrivial normal subgroup of $G$ is nontrivial.

\end{prop}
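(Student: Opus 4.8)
The plan is to reduce the statement to the case of two normal subgroups (by induction, once we know that the intersection of two nontrivial normal subgroups is nontrivial), and then to argue by contradiction: if $N_1, N_2 \lhd G$ are nontrivial with $N_1 \cap N_2 = \{1\}$, then $N_1$ and $N_2$ commute elementwise, so $G$ contains a nontrivial abelian normal subgroup of the form $N_1 \times N_2$ (or at least $N_1$ centralizes $N_2$). The first step is therefore to observe that in a Demushkin group of rank $>2$ the centralizer of any nontrivial element is small. Concretely, a Demushkin group is a Poincar\'e duality group of dimension $2$, so it has cohomological dimension $2$; if $1 \neq x \in G$ and $C$ is its centralizer, then $C$ contains the procyclic group $\overline{\langle x\rangle} \cong \mathbb{Z}_p$, and an abelian subgroup of a pro-$p$ group of cohomological dimension $2$ has rank at most $2$. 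The key point I would extract is that $G$ itself has no nontrivial abelian normal subgroup: an infinite index normal subgroup $N$ of a Demushkin group $G$ must be free pro-$p$ (this is the pro-$p$ analogue of the fact that infinite-index normal subgroups of surface groups are free), and a nontrivial normal subgroup of finite index cannot be abelian since $d(U) = (d(G)-2)[G:U]+2 > 2$ for all open $U$ by \eqref{DemRankEq}, so $U$, hence any finite-index normal subgroup, is itself Demushkin of rank $>2$ and visibly non-abelian.

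With that in hand, the two-subgroup case runs as follows. Suppose $N_1, N_2 \lhd G$ are nontrivial and $N_1 \cap N_2 = \{1\}$. Then $[N_1, N_2] \leq N_1 \cap N_2 = \{1\}$, so $N_1 \leq C_G(N_2)$. If $N_2$ has finite index in $G$, then $C_G(N_2)$ has finite index, but $C_G(N_2) \leq C_G(x)$ for any $1\neq x\in N_2$, and a finite-index subgroup of $G$ contained in the centralizer of a single element would force that centralizer to be of finite index, contradicting that centralizers in $G$ have rank $\leq 2$ while finite-index subgroups have rank $>2$. Hence $N_2$, and symmetrically $N_1$, is of infinite index, so by the structure of infinite-index normal subgroups, $N_2$ is free pro-$p$; then $N_1 \leq C_G(N_2)$ maps into the center of $N_2 \cdot N_1 / $ (or directly: $N_1$ centralizes the free pro-$p$ group $N_2$, but a nontrivial free pro-$p$ group has trivial centralizer in any overgroup in which it is normal, unless it is procyclic). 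One cleans this up by noting $\langle N_1, N_2\rangle = N_1 \times N_2$ is a normal subgroup containing a copy of $\mathbb{Z}_p \times \mathbb{Z}_p$ coming from procyclic pieces; if this normal subgroup has infinite index it must be free pro-$p$, contradicting the presence of $\mathbb{Z}_p\times\mathbb{Z}_p$ (which is not free pro-$p$), and if it has finite index we again contradict $d(U)>2$ forcing $U$ non-abelian — wait, $N_1\times N_2$ need not be all of $U$, but it is normal of finite index and abelian, which is already impossible since finite-index subgroups of $G$ are Demushkin of rank $>2$ and a normal subgroup of such a group that is abelian must be trivial (its centralizer, containing itself and having finite index, again violates the rank bound).

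For the general case of a finite family $N_1, \dots, N_k$ of nontrivial normal subgroups, I would induct on $k$: by the inductive hypothesis $N \defeq N_1 \cap \cdots \cap N_{k-1}$ is nontrivial and normal, and then the two-subgroup case applied to $N$ and $N_k$ gives $N \cap N_k \neq \{1\}$.

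The main obstacle is the input about the structure of normal subgroups of Demushkin groups — specifically the assertion that an infinite-index closed normal subgroup is free pro-$p$, and the rank/centralizer bounds coming from Poincar\'e duality in dimension $2$. If those facts are not available from the cited literature (\cite{Lab}, \cite{NSW}, \cite{SZ}) in exactly the form needed, one would instead argue more directly with cohomological dimension: $N_1 \times N_2 \leq G$ with $\mathrm{cd}(G) = 2$ forces $\mathrm{cd}(N_1) + \mathrm{cd}(N_2) \leq 2$, so one of them, say $N_2$, has $\mathrm{cd}(N_2) \leq 1$, hence is free pro-$p$ by the pro-$p$ version of the Stallings--Swan theorem; then a nontrivial free pro-$p$ group normal in $G$ and centralized by the nontrivial group $N_1$ leads to a contradiction with $\mathrm{cd}(G)=2$ unless $N_1$ is also free of cohomological dimension $\leq 1$, and then $N_1\times N_2$ of cohomological dimension $2$ and infinite index would make $G/(N_1\times N_2)$ a nontrivial finite $p$-group quotient of a $\mathrm{PD}_2$ group acting on an $\mathrm{FP}_\infty$ module, which is incompatible with $G$ being $\mathrm{PD}_2$ unless $N_1\times N_2$ has finite index — and finite-index abelian-by-(abelian with the two factors commuting) is still ruled out by $d(U)>2$. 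The cleanest formulation to aim for in the write-up is: every nontrivial normal subgroup of a Demushkin group of rank $>2$ has finite cohomological dimension forcing it to be "large", and two such cannot be complementary.
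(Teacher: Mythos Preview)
Your core idea---reduce to two normal subgroups by induction, use $[N_1,N_2]\subseteq N_1\cap N_2=\{1\}$ to produce a copy of $\mathbb{Z}_p\times\mathbb{Z}_p$ inside $G$, and contradict the freeness of infinite-index subgroups---is exactly what the paper does. But your execution is tangled and contains actual errors: the claim ``if $N_2$ has finite index in $G$, then $C_G(N_2)$ has finite index'' is unjustified (and false in general: take $N_2=G$), and later ``$N_1\times N_2$ is \dots\ abelian'' is simply wrong, since $N_1,N_2$ commute with each other but need not themselves be abelian. The cohomological-dimension paragraph at the end is closer to a valid argument, but still does not close the finite-index case cleanly.

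The paper cuts through all of this by immediately passing to \emph{elements} rather than wrestling with the whole normal subgroups. Pick nontrivial $a\in A$, $b\in B$; then $[a,b]\in A\cap B=\{1\}$. Each of $\langle a\rangle,\langle b\rangle$ is procyclic, hence of infinite index (open subgroups have rank $>2$ by \eqref{DemRankEq}), hence free pro-$p$ by \cite[Exercise 4.5.5 (b)]{Ser}, i.e.\ isomorphic to $\mathbb{Z}_p$. Thus $\langle a,b\rangle\cong\mathbb{Z}_p\times\mathbb{Z}_p$. But $\langle a,b\rangle$ is $2$-generated, so again of infinite index by \eqref{DemRankEq}, hence free pro-$p$---contradicting $\mathbb{Z}_p^2$. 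The whole finite-index/infinite-index dichotomy you struggle with evaporates: a $2$-generated subgroup is \emph{automatically} of infinite index here. Your detours through centralizers, normal-subgroup structure, and Stallings--Swan are unnecessary.
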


\begin{proof}

By induction, it suffices to take some $\{1\} \neq A,B \lhd_c G$ and to show that they intersect nontrivially.
Suppose on the contrary that $A \cap B = \{1\}$ and take some nontrivial $a \in A, b \in B$.
It follows from \eqref{DemRankEq} that $\langle a \rangle , \langle b \rangle$ are subgroups of infinite index,
so by \cite[Exercise 4.5.5 (b)]{Ser} these subgroups are free.
Since $[a,b] \in A \cap B = \{1\}$,
it follows that $\langle a,b \rangle \cong \langle a \rangle \times \langle b \rangle \cong \mathbb{Z}_p^2$.
We infer once again from \eqref{DemRankEq} and from \cite[Exercise 4.5.5 (b)]{Ser} that $\langle a,b \rangle$ is a free pro-$p$ group,
contradicting the fact that it is isomorphic to $\mathbb{Z}_p^2$.
\end{proof}

Let us now prove \corref{ImmCor}.

\begin{proof}

Let $K$ be a finitely generated subgroup of infinite index in a Demushkin group $G$ with $d(G) > 2$.
By \thmref{FirstRes}, there exists an open subgroup $U$ of $G$ containing $K$,
and a normal subgroup $M$ of $U$ such that $K \cap M = \{1\}$ and $KM = U$.
Since the index of $K$ in $G$ is infinite, it follows immediately that $M$ is infinite.
As $U$ is open in $G$, we can find an open subgroup $V$ of $U$ that is normal in $G$,
and conclude that $R \defeq M \cap V$ is a nontrivial normal subgroup of $V$.
It follows from \propref{intersectProp} that 
\begin{equation}
N \defeq \bigcap_{g \in G/V} g^{-1}Rg \neq \{1\}.
\end{equation}
Hence, $N$ is a nontrivial normal subgroup of $G$ intersecting $K$ trivially.
\end{proof}

We also prove \corref{adicSylowCor}.

\begin{proof}

Let $G$ be the $p$-Sylow subgroup of the absolute Galois group of $\mathbb{Q}_p$,
and let $H$ be a finitely generated subgroup.
It follows from \cite{Lab0} that 
\begin{equation}
G = \varprojlim_{n \in  \mathbb{N}} G_n
\end{equation}
where each $G_n$ is a Demushkin group, and $d(G_n) \to \infty$ as $n \to \infty$.
In view of \eqref{DemRankEq}, there exists an $N \in \mathbb{N}$ such that for every $n \geq N$, 
the image of $H$ in $G_n$ is of infinite index, and thus free.
Taking $m \geq N$ large enough, we may assume that the number of generators of the image of $H$ in $G_m$ coincides with $d(H)$ and that $d(G_m) > 2$.
By the Hopfian property \cite[Proposition 2.5.2]{RZ}, the projection of $H$ to $G_m$ is injective
and we invoke \thmref{FirstRes}.
\end{proof}

\section{Howson's theorem for Demushkin groups} \label{HowsSec}

Let us now establish Howson's theorem for Demushkin groups, that is, \thmref{SecRes}.
Our approach is based on the proof of \cite[Lemma 3.1]{Sh}.

\begin{thm} \label{EHowsThm}

Let $p$ be a prime, let $G$ be a pro-$p$ Demushkin group, 
and let $A,B \leq_c G$ be nontrivial finitely generated subgroups. Then
\begin{equation} \label{HowsGoalEq}
d(A \cap B) \leq \Big(p^2\big(d(A) + d(B) - 2\big)^2\Big)\big(d(A)-1\big)\big(d(B)-1\big) +  1.
\end{equation}

\end{thm}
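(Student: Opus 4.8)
The plan is to follow the strategy of \cite[Lemma 3.1]{Sh}, adapted to the pro-$p$ Demushkin setting, and to work inside a common open subgroup of $G$ containing both $A$ and $B$. First I would reduce to a convenient situation: by \eqref{DemRankEq} one can pass to an open subgroup $U$ of $G$ with $d(U)$ as large as needed (and even, so that $U$ is again Demushkin), containing both $A$ and $B$; since $A\cap B$ has finite index in $A\cap B$ computed inside $U$ — indeed it \emph{is} the same group — no information is lost, while the ranks $d(A),d(B)$ appearing in the bound are unaffected if we first replace $A,B$ by themselves (finite generation of $A,B$ is hypothesized). The point of enlarging $U$ is that the cohomological input needed below is cleanest when the relator of $U$ is in ``generic position'' relative to $A$ and $B$.

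Next, the heart of the argument is cohomological. A Demushkin group $G$ is a Poincar\'e duality group of dimension $2$ at $p$, so there is a bound on $d(A\cap B)$ coming from the Euler-characteristic–type computation: for an open subgroup $W$ of a Demushkin group one has $d(W)-2 = [G:W](d(G)-2)$ from \eqref{DemRankEq}, and for an infinite-index finitely generated subgroup $W$ (which by \cite[Exercise 4.5.5 (b)]{Ser} is free) one has instead a ``defect'' governed by $H^2$. The mechanism of \cite{Sh} is to estimate $d(A\cap B)$ by counting, in a suitable finite-index subgroup, the contribution of the double cosets $A\backslash U/B$ and controlling the ``overlap'' via the cup product / the symplectic form on $H^1$ developed in Section~2.1. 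Concretely: choose $U$ open containing $A,B$ with $A,B$ of infinite index in $U$; then $A,B$ are free pro-$p$, $A\cap B$ is free pro-$p$, and one bounds $d(A\cap B)-1$ by a sum over the relevant double cosets of products $(d(A)-1)(d(B)-1)$, the number of double cosets being controlled by $[U:?]$ which in turn is controlled by the index needed to separate $A$ and $B$ cohomologically — and that index is where the $p^2(d(A)+d(B)-2)^2$ factor enters, via the size of the relevant Frattini-type quotient through which the obstruction to $A$ and $B$ ``commuting modulo the relator'' must factor.

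More precisely, I expect the argument to run: (i) pass to an open $U\trianglelefteq$-ish subgroup so that $AB$-double cosets are ``few''; (ii) inside $U$, use the one-relator presentation from \corref{IsoCor} to write $U$ as an HNN-type / one-relator pro-$p$ group and apply a pro-$p$ Kurosh/Mayer–Vietoris estimate to $A\cap B$; (iii) the number of pieces is bounded by $[U:V]$ for $V$ the open subgroup cut out by the images of $A$ and $B$ in an abelian (or metabelian) quotient, and $[U:V]\le p^{\,O((d(A)+d(B))^2)}$ because both images live in a quotient of rank $O(d(A)+d(B))$; (iv) assembling, $d(A\cap B)-1 \le [U:V]\cdot(d(A)-1)(d(B)-1) \le p^2(d(A)+d(B)-2)^2 (d(A)-1)(d(B)-1)$, giving \eqref{HowsGoalEq}.

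The main obstacle, I expect, is step (ii)--(iii): making the ``counting of overlaps'' in \cite[Lemma 3.1]{Sh} work verbatim in the pro-$p$ category and, crucially, verifying that the index $[U:V]$ one needs really is bounded by $p^2(d(A)+d(B)-2)^2$ rather than something larger. This requires pinning down exactly which quotient of $U$ the separating subgroup $V$ must be pulled back from, and using that $A,B$ are free (infinite index) to kill the relator's contribution to $H^2$ of the intersection; the symplectic-form technology of Section~2.1 and \corref{IsotrCor} (isotropy of restriction images) is what lets one control the $H^2$ term and hence ensure the only cost is the index, not an additional relator defect. Everything else — the reduction to infinite-index (hence free) subgroups via \eqref{DemRankEq}, and the final arithmetic — is routine.
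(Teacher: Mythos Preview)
Your proposal misidentifies the mechanism. The paper's proof uses \emph{none} of the symplectic-form material from Section~2.1, no one-relator presentation from \corref{IsoCor}, no \corref{IsotrCor}, no Kurosh or Mayer--Vietoris decomposition, and no double-coset counting. The actual argument is shorter and more elementary than what you sketch, but it hinges on one ingredient you never mention: Jaikin-Zapirain's pro-$p$ Hanna Neumann theorem \cite[Theorem~1.1]{J}. Without that input your scheme has no way to bound $d(A\cap B)$ once you are inside a free pro-$p$ subgroup; the ``counting of overlaps'' you allude to does not, by itself, give a finite bound in the pro-$p$ world.

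Here is what the paper actually does. After disposing of the solvable case $d(G)\le 2$ and the case where $A$ or $B$ is open (Schreier), one takes strictly descending chains $\{U_k\}$, $\{V_k\}$ of open subgroups of $G$ with $\bigcap U_k=A$, $\bigcap V_k=B$, and $[G:U_k]=[G:V_k]=p^k$. For $n=\lfloor\log_p(d(A)+d(B)-2)\rfloor+1$ set $A_0=A\cap V_n$, $B_0=B\cap U_n$, and $C=\langle A_0,B_0\rangle\le U_n\cap V_n$. A short rank estimate using Schreier and the choice of $n$ gives $d(C)<(d(G)-2)[G:U_n\cap V_n]+2$, which by \eqref{DemRankEq} forces $[G:C]=\infty$; hence $C$ is free pro-$p$. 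Now apply \cite{J} to $A_0,B_0\le C$: $d(A_0\cap B_0)-1\le (d(A_0)-1)(d(B_0)-1)$. Since $A\cap B=A_0\cap B_0$ and $[A:A_0],[B:B_0]\le p^n\le p(d(A)+d(B)-2)$, the stated bound follows. The factor $p^2(d(A)+d(B)-2)^2$ is exactly $p^{2n}$, not the size of any cohomological obstruction quotient. So the ``main obstacle'' you anticipate does not exist, but the step you treat as routine (bounding the intersection once inside a free pro-$p$ group) is precisely where the deep input \cite{J} is required.
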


\begin{proof}

Note that if $d(G) \leq 2$ (or equivalently, if $G$ is solvable) then by \eqref{DemRankEq}, 
every open subgroup $U$ of $G$ satisfies $d(U) \leq 2$.
Hence, by \cite[Proposition 3.11]{DDMS}, for every $H \leq_c G$ we have $d(H) \leq 2$ so \eqref{HowsGoalEq} holds.
In light of that, we assume in what follows that $d(G) \geq 3$.

If either $A$ or $B$ is open in $G$ (say, $[G : B] < \infty$),
then by \cite[Corollary 3.6.3]{RZ} (Schreier's bound) we have
\begin{equation}
\begin{split}
d(A \cap B) &\leq \big(d(A)-1\big)[A : A \cap B] + 1 \\
&\leq \big(d(A)-1\big)[G : B] + 1 
\stackrel{\ref{DemRankEq}}{\leq} \big(d(A) - 1\big)\big(d(B) - 2\big) + 1.
\end{split}
\end{equation}

We may thus assume that $A$ and $B$ are of infinite index in $G$, 
so there exist strictly descending chains $\{U_k\}_{k=0}^\infty$ and $\{V_k\}_{k=0}^\infty$ of open subgroups of $G$ intersecting at $A$ and $B$ respectively.
After refining the chains if necessary, 
we may assume that $U_0 = V_0 = G$, 
that $U_{k+1}$ is a maximal subgroup of $U_k$,
and that $V_{k+1}$ is a maximal subgroup of $V_k$ for every  $k \geq 0$.
Hence,
\begin{equation}
[U_k : U_{k+1}] = [V_k : V_{k+1}] = p
\end{equation}
so we have
\begin{equation} \label{ExpIndEq}
[G : U_k] = [G : V_k] = p^k.
\end{equation}
If $A$ is procyclic then \eqref{HowsGoalEq} holds trivially, so we may assume that $d(A) \geq 2$ and define
\begin{equation} \label{HowsDefEq}
\begin{split}
&n \defeq \Big\lfloor\log_p\ \big(d(A) + d(B) - 2\big) \Big\rfloor + 1 \\ 
&A_0 \defeq A \cap V_n, \quad B_0 \defeq B \cap U_n, \quad C \defeq \langle A_0 \cup B_0 \rangle \leq U_n \cap V_n,
\end{split}
\end{equation}
so that
\begin{equation} \label{BitLongEq}
\begin{split}
d(C) &\leq d(A_0) + d(B_0) \leq (d(A)-1)[A : A_0] + (d(B)-1)[B : B_0] + 2 \\
&\stackrel{\ref{HowsDefEq}}{\leq} (d(A)-1)[U_n : U_n \cap V_n] + (d(B)-1)[V_n : U_n \cap V_n] + 2 \\
&= [G : U_n \cap V_n]\Bigg(\frac{d(A)-1}{[G : U_n]} + \frac{d(B)-1}{[G : V_n]}\Bigg) + 2\\
&\stackrel{\ref{ExpIndEq}}{=} [G : U_n \cap V_n]\Bigg(\frac{d(A) + d(B) - 2}{p^n}\Bigg) + 2\\
&\stackrel{\ref{HowsDefEq}}{<} [G : U_n \cap V_n] + 2 \leq (d(G)-2)[G : U_n \cap V_n] + 2
\end{split}
\end{equation}
where the last inequality follows from our assumption that $d(G) \geq 3$.

If $C$ was an open subgroup of $G$, we would have
\begin{equation} 
d(C) \stackrel{\ref{DemRankEq}}{=} \big(d(G)-2\big)[G : C] + 2 \stackrel{\ref{HowsDefEq}}{\geq} \big(d(G)-2\big)[G : U_n \cap V_n] + 2 
\end{equation}
contrary to \eqref{BitLongEq}.
We conclude that $[G : C]$ is infinite, so $C$ is a free pro-$p$ group.
Applying \cite[Theorem 1.1]{J} (the Hanna Neumann Conjecture for free pro-$p$ groups)
to the nontrivial subgroups $A_0,B_0 \leq_c C$ we get that
\begin{equation}
\begin{split}
d(A \cap B) &\stackrel{\ref{HowsDefEq}}{=} d(A_0 \cap B_0) \leq \big(d(A_0)-1\big)\big(d(B_0)-1\big) + 1 \\
&\leq \big(d(A)-1\big)[A : A_0]\big(d(B)-1\big)[B : B_0] + 1 \\
&\stackrel{\ref{HowsDefEq}}{\leq} \big(d(A)-1\big)[G : V_n]\big(d(B)-1\big)[G : U_n] + 1 \\
&\stackrel{\ref{ExpIndEq}}{=} p^{2n}\big(d(A)-1\big)\big(d(B)-1\big) + 1 \\
&\stackrel{\ref{HowsDefEq}}{\leq} p^2\big(d(A) + d(B) - 2\big)^2\big(d(A)-1\big)\big(d(B)-1\big) +  1.
\end{split}
\end{equation}
\end{proof}

\thmref{EHowsThm} gives a partial positive answer to \cite[Open Question 9.5.7]{RZ} and \cite[Question 9.4]{KZ} asking for Howson's theorem in groups from $\mathcal{L}$.

In the core of our proof, namely the case
\begin{equation}
d(G) \geq 3, \quad [G : A] = \infty, \quad [G:B] = \infty
\end{equation}
we have used a quantitative version of the argument appearing in the proof of \cite[Lemma 3.1]{Sh} in order to find finite index subgroups $A_0,B_0$ (of $A$ and $B$ respectively) that generate a subgroup of infinite index in $G$.
As in \cite[Lemma 3.1]{Sh}, we only need $G$ to have positive rank gradient (that is, linear growth of the number of generators of open subgroups $U$ of $G$ as a function of $[G : U]$) for the argument to work. 
In order to deduce Howson's theorem for $G$, 
we have used the fact that $G$ is IF (that is, every finitely generated subgroup of infinite index is free pro-$p$).
Thus, we have shown that IF-groups with positive rank gradient satisfy Howson's theorem.
Examples of such groups are given in \cite{SZ}, they are studied in \cite{Shus}, and \cite[Open Question 7.10.5]{RZ}  
asks about their properties.

Furthermore, our argument also applies with almost no changes to discrete finitely generated groups $G$ that are IF and have positive rank gradient. 
We only need to further assume that $G$ is LPF (that is, for every finitely generated subgroup $A$ of infinite index in $G$, there exists an infinite strictly descending chain of finite index subgroups of $G$ containing $A$ - see \cite{C, Sh}).
In particular, we get a new proof of Howson's theorem for surface groups (for the first proof, see \cite{Green}).
The bound on the number of generators of the intersection provided by such a proof depends on the indices of the subgroups in the descending chains above the given finitely generated subgroups $A$ and $B$.
For instance, in case that $A$ and $B$ are closed in the pro-$2$ topology on $G$,
inequality \eqref{HowsGoalEq} holds with $p=2$,
so we obtain an improvement on the best known bound from \cite{Som1} under the additional assumption that $d(A)$ and $d(B)$ are relatively small.
For general $A$ and $B$ one can derive bounds on $d(A \cap B)$ using quantitative versions of the LPF/LERF property of surface groups obtained in \cite[Theorem 7.1]{Pat}.

\section{Free products}

\subsection{Properties preserved by free products}

We need the following lemma for the proof of \thmref{PresVRThm} and \thmref{PresMHThm}.

\begin{lem} \label{ProdRepLem}

Let $A$ be a pro-$p$ group, let $B$ be a free pro-$p$ group, and set $G \defeq A\amalg B$.
Let $C$ be a subgroup of $G$ such that the restriction to $C$ of the projection from $G$ to $B$ is an isomorphism. 
Then $G = A \amalg C$.

\end{lem}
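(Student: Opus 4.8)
The plan is to exhibit the identity map on $G$ as the unique homomorphism $G \to A \amalg C$ compatible with the obvious inclusions, and to build its inverse using the universal property of the free pro-$p$ product. Write $\pi \colon G = A \amalg B \to B$ for the projection (the identity on $B$, trivial on $A$), and let $\iota \colon B \to C$ be the inverse of the isomorphism $\pi|_C \colon C \xrightarrow{\sim} B$. The key point is that $C$, being isomorphic to the free pro-$p$ group $B$, is itself free pro-$p$, and the composite $\pi|_C$ gives us a concrete splitting to work with.

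First I would define a homomorphism $\phi \colon G \to A \amalg C$ on the two free factors: let $\phi$ be the canonical inclusion $A \hookrightarrow A \amalg C$ on $A$, and on $B$ let $\phi|_B$ be the composite $B \xrightarrow{\iota} C \hookrightarrow A \amalg C$. By the universal property of $G = A \amalg B$ these assemble to a continuous homomorphism $\phi \colon G \to A \amalg C$. Next I would define a homomorphism $\psi \colon A \amalg C \to G$ the other way: on $A$ take the inclusion $A \hookrightarrow G$, and on $C$ take the inclusion $C \hookrightarrow G$ (legitimate since $C \leq G$); again the universal property of $A \amalg C$ produces $\psi$.

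Then I would check $\psi \circ \phi = \mathrm{id}_G$ and $\phi \circ \psi = \mathrm{id}_{A \amalg C}$ by verifying each on generators, i.e. on the respective free factors. For $\psi \circ \phi$: on $A$ it is the inclusion composed with the inclusion, hence the inclusion $A \hookrightarrow G$; on $B$ it sends $b \mapsto \iota(b) \in C \hookrightarrow G$, and since $\pi|_C \circ \iota = \mathrm{id}_B$ while $\pi$ is the identity on $B \leq G$, one checks $\iota(b)$ and $b$ have the same image under $\pi$ — but I actually need them equal in $G$, which does \emph{not} follow from agreement after $\pi$. This is the real subtlety, and it is where I expect the main obstacle to lie: one must instead argue that $\psi \circ \phi$, being an endomorphism of $G$ that is the identity on $A$ and, composed further with $\pi$, is the identity on $B$, must already be the identity; more robustly, one shows directly that $\phi$ is surjective (its image contains $A$ and $C$, hence contains $\langle A, C\rangle$, and since $\pi$ restricted to $\langle A,C\rangle$ already hits all of $B$ while $A$ is in the kernel of $\pi$, a standard argument with the Kurosh/structure theory or with $d(G)$ forces $\langle A, C \rangle = G$) and that $\phi$ is injective (using that $A \amalg C \to A \amalg B$ induced by $\pi|_C$ is an isomorphism, so $\phi$ followed by this isomorphism is $\mathrm{id}_G$, whence $\phi$ is injective and the isomorphism $A\amalg C \cong A \amalg B = G$ is realized).

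In fact the cleanest route, which I would ultimately adopt, is: the map $A \amalg C \to A \amalg B = G$ that is the inclusion on $A$ and $\pi|_C$ on $C$ is a continuous homomorphism $\Theta$; composing $\Theta$ with $\phi \colon G \to A \amalg C$ gives an endomorphism of $G$ that is the identity on $A$ and on $B$, hence $\Theta \circ \phi = \mathrm{id}_G$, so $\phi$ is a split mono. Conversely $\phi \circ \Theta$ is an endomorphism of $A \amalg C$ equal to the inclusion on $A$ and sending $c \mapsto \phi(\pi(c)) = \iota(\pi(c)) = c$ on $C$ (here using $\iota \circ \pi|_C = \mathrm{id}_C$), hence $\phi \circ \Theta = \mathrm{id}_{A\amalg C}$. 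Therefore $\phi$ and $\Theta$ are mutually inverse isomorphisms, and since $\Theta$ restricts to the given inclusions $A \hookrightarrow G$ and $C \hookrightarrow G$, we conclude $G = A \amalg C$, as required.
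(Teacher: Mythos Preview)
Your argument contains a genuine gap in the final step. You correctly establish that $\Theta \colon A \amalg C \to G$ (identity on $A$, and $\pi|_C$ on $C$) and $\phi \colon G \to A \amalg C$ are mutually inverse isomorphisms. But then you claim that ``$\Theta$ restricts to the given inclusions $A \hookrightarrow G$ and $C \hookrightarrow G$,'' and this is false: by your own definition, $\Theta|_C = \pi|_C$, which maps $C$ onto $B \subseteq G$, not onto $C \subseteq G$. So $\Theta$ is \emph{not} the canonical map $\psi$ induced by the inclusions of $A$ and $C$ into $G$, and the statement ``$G = A \amalg C$'' means precisely that $\psi$ is an isomorphism --- an internal free product decomposition, which is what is actually used later in the paper when $F_N$ is replaced by $F_0$ in a Kurosh decomposition. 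What you have proved is only an abstract isomorphism $G \cong A \amalg C$; for instance, with $A = \langle a \rangle \cong \mathbb{Z}_p$, $B = \langle b \rangle \cong \mathbb{Z}_p$, and $C = \langle ab \rangle$, your $\Theta$ sends the factor $C$ to $B$, not to $C$.

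To finish, you must show $\psi$ itself is an isomorphism. Surjectivity follows from a Frattini argument (the images of $A$ and $C$ in $H_1(G,\mathbb{F}_p) \cong H_1(A,\mathbb{F}_p)\oplus H_1(B,\mathbb{F}_p)$ span, since $\pi|_C$ is onto), and the paper does exactly this. For injectivity, note that composing with your isomorphism $\phi$ reduces to showing the surjective endomorphism $\psi\circ\phi$ of $G$ is injective; if $G$ were finitely generated this would follow from the Hopfian property, but the lemma is stated (and applied) without that hypothesis. The paper instead invokes Melnikov's homological criterion \cite[Theorem 4.2]{Mel}: it suffices that $\mathrm{Cor}\colon H_q(A,\mathbb{F}_p)\oplus H_q(C,\mathbb{F}_p)\to H_q(G,\mathbb{F}_p)$ be injective for $q=1$ (the Frattini argument again) and surjective for $q=2$, the latter using crucially that $H_2(B,\mathbb{F}_p)=H_2(C,\mathbb{F}_p)=0$ since $B$ and $C$ are free pro-$p$. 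Your universal-property framework does not seem to access this $H_2$ information, which is where the actual content lies.
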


\begin{proof} 

A Frattini argument shows that $\langle A,C \rangle = G$ so by \cite[Theorem 4.2]{Mel},
it suffices to show that the map on homology  
\begin{equation}
\mathrm{Cor} \colon H_q(A, \mathbb{F}_p) \oplus H_q(C,\mathbb{F}_p) \rightarrow H_q(G,\mathbb{F}_p)
\end{equation}
induced by corestrictions is surjective for $q=2$ and injective for $q=1$.  
The latter follows from a Frattini argument, 
and for the former we use \cite[Theorem 4.1 (2)]{Mel} saying that
\begin{equation}
H_2(A, \mathbb{F}_p) \oplus H_2(B, \mathbb{F}_p) \stackrel{\mathrm{Cor}}{\cong} H_2(G, \mathbb{F}_p)
\end{equation}
and recall that $H_2(B, \mathbb{F}_p) \cong H_2(C, \mathbb{F}_p) \cong 0$ as $B$ and $C$ are free. \end{proof}

We are going to use throughout the pro-$p$ Kurosh subgroup theorem for finitely generated subgroups, and for open subgroups, as stated in \cite[Theorem 2.1]{Rib} and in \cite[Theorem 9.1.9]{RZ} respectively.
Let us reformulate and prove \thmref{PresVRThm} and \thmref{PresMHThm}. 

\begin{thm} \label{virtretrfreeprod} 

Let $n \in \mathbb{N}$,
and let $G_1, \dots, G_n$ be pro-$p$ groups with the virtual retraction property (respectively, the M. Hall property).
Then 
\begin{equation}
G \defeq \coprod_{i=1}^n G_i
\end{equation}
possesses the virtual retraction property (respectively, the M. Hall property).

\end{thm}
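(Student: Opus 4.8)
The plan is to reduce immediately to the case $n = 2$ by induction on $n$, writing $G = G_1 \amalg (G_2 \amalg \cdots \amalg G_n)$ and using that the class of pro-$p$ groups with the virtual retraction property (respectively the M. Hall property) is, by the inductive hypothesis, closed under the relevant free products. So fix $G = A \amalg B$ with $A, B$ both having the property in question, and let $K \leq_c G$ be finitely generated; we must find an open $U \leq_c G$ containing $K$ with $K$ a retract of $U$ (respectively a free factor of $U$). First I would apply the pro-$p$ Kurosh subgroup theorem \cite[Theorem 2.1]{Rib} to $K$, obtaining a decomposition
\begin{equation}
K = \Big(\coprod_{i} K \cap A^{g_i}\Big) \amalg \Big(\coprod_{j} K \cap B^{h_j}\Big) \amalg F_K
\end{equation}
with $F_K$ free pro-$p$ and all factors finitely generated. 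Each $K \cap A^{g_i}$ is (conjugate to) a finitely generated subgroup of $A$, hence a retract (respectively free factor) of some open subgroup of $A$, and similarly on the $B$-side; the free factor $F_K$ is handled by Ribes's theorem for free pro-$p$ groups \cite{Rib}.

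The next step is to assemble these ``local'' retractions into a global one. Here I would pass to an open subgroup $W \leq_c G$ containing $K$ that is ``in standard form'' relative to the free product — using \cite[Theorem 9.1.9]{RZ}, an open subgroup of a free pro-$p$ product is again a free pro-$p$ product of conjugates of open subgroups of the factors together with a free pro-$p$ group. By choosing $W$ small enough (shrinking the open subgroups of $A$ and $B$ that appear so that they contain the respective $K \cap A^{g_i}$, $K \cap B^{h_j}$ as retracts / free factors), one arranges that $W = \coprod_\alpha W_\alpha$ where each $W_\alpha$ is either a conjugate of an open subgroup of $A$ or of $B$ (and contains the corresponding Kurosh factor of $K$ as a retract / free factor) or is free pro-$p$ (containing a piece of $F_K$ as a free factor). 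Taking the free product of the individual retractions (respectively of the inclusions as free factors) produces a retraction $W \to K'$ onto a subgroup $K'$ that contains $K$; in the M. Hall case $K$ sits inside $W$ as a free factor directly and we are done, while in the retraction case I would compose with a retraction of $K'$ onto $K$ coming from the same Kurosh bookkeeping, or better, arrange from the start that $K' = K$ by choosing the local data compatibly.

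The main obstacle I anticipate is the ``compatibility'' bookkeeping in the middle step: the Kurosh factors of $K$ are indexed by double cosets, and when one passes to an open subgroup $W$ of $G$ the double coset structure changes, so one must check that the retractions chosen inside the factors $A$ and $B$ can be made to glue — i.e. that the conjugating elements and the open subgroups can be chosen uniformly enough that $K \to W$ followed by the assembled retraction $W \to K$ is the identity on $K$. This is where \lemref{ProdRepLem} should be the key technical tool: it lets one recognize when a subgroup mapping isomorphically onto a free factor actually splits off as a free factor, which is exactly what is needed to promote a ``sections exist piecewise'' statement to a genuine free-product decomposition of $W$. A secondary point to be careful about is that the virtual retraction property as used here is not assumed finitely-generated-only for the factors $G_i$ (indeed \corref{adicSylowCor} is quoted as an example), so the argument must only use finite generation of $K$ and of its Kurosh factors, never of $A$ or $B$.
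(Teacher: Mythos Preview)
Your overall architecture matches the paper's: Kurosh for $H$, handle the non-free factors via the hypothesis on the $G_i$, then glue. The reduction to $n=2$ is unnecessary but harmless. The real issue is your treatment of the free part $F_K$.

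You write that by shrinking $W$ one arranges that some free pro-$p$ factor of $W$ ``contains a piece of $F_K$ as a free factor''. This is not justified, and in general $F_K$ does \emph{not} live inside $F_W$ (or any conjugate of it): elements of $F_K$ are arbitrary elements of $K \subseteq W$, and their Kurosh normal form in $W$ typically involves the non-free factors of $W$ as well. So there is no direct ``local'' retraction for $F_K$ to assemble, and invoking Ribes's theorem for free pro-$p$ groups at this point is premature --- you do not yet have $F_K$ sitting inside a free pro-$p$ subgroup of $W$.

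The paper resolves this by working with quotients rather than subgroups. For any $K$ (finitely generated or open) one sets $\tilde K \defeq \langle K \cap \bigcup_{i,g} G_i^g \rangle$ and checks from the Kurosh decomposition that $K/\tilde K \cong F_K$ canonically. Since $X_H \subseteq X_U$ for $H \leq U$, the inclusions induce maps $\eta_U \colon F_H \to F_U$, and one proves $\tilde H = \bigcap_U \tilde U$ so that $F_H \cong \varprojlim_U F_U$. Now Hopfianity gives an $M$ with $\eta_M$ injective; the M. Hall property of free pro-$p$ groups then produces $N \leq_o M$ with $F_N = \eta_N(F_H) \amalg Q$. Only at this point does \lemref{ProdRepLem} enter: setting $F_0 \defeq \langle F_H \cup Q \rangle \leq N$, the projection $N \to F_N$ restricts to an isomorphism $F_0 \to F_N$, so the lemma lets you replace $F_N$ by $F_0 = F_H \amalg Q$ in the Kurosh decomposition of $N$. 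Now every Kurosh factor of $H$ is genuinely a retract (respectively free factor) of a Kurosh factor of $N$, and the gluing is trivial. Your instinct that \lemref{ProdRepLem} is the key was right, but it is used to \emph{pull back} the free factor $F_H$ from the quotient $F_N$ into $N$, not to recognize it as already sitting there.
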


\begin{proof}  

Let $H$ be a finitely generated subgroup of $G$. 
By the Kurosh subgroup theorem (see \cite[Theorem 2.1]{Rib}),

\begin{equation} \label{HDecEq}
H=\coprod_{i=1}^n \coprod_{j=1}^{r_i} (G_i^{g_{i,j}} \cap H)\amalg F_H
\end{equation} 
where for each $1 \leq i \leq n$ the $g_{i,j}$ range over those representatives for the double cosets $G_i\backslash G/H$ for which $G_i^{g_{i,j}} \cap H \neq \{1\}$,
and $F_H$ is a finitely generated free pro-$p$ group.  
Since $H$ is finitely generated, 
the number of factors in \eqref{HDecEq} is finite (that is, $\forall i \ r_i < \infty$), 
and these factors are finitely generated.  
There exists an open subgroup $V$ of $G$ containing $H$ such that
for every open subgroup $U$ of $V$ containing $H$ we have
\begin{equation}
\forall \ 1 \leq i \leq n, 1 \leq j \neq k \leq r_i \quad G_ig_{i,j}U\neq G_ig_ {i, k}U
\end{equation}
and $G_i^{g_{i,j}} \cap H$ is a retract (respectively, a free factor) of $G_i^{g_{i,j}} \cap U$.
According to \cite[Theorem 9.1.9]{RZ}, we have the following Kurosh decomposition
\begin{equation} \label{KurDec}
U=\coprod_{i=1}^n \coprod_{j=1}^{r_i} (G_i^{g_{i,j}} \cap U)\amalg \coprod_{i=1}^n \coprod_{k=1}^{t_i} (G_i^{\beta_{i,k}} \cap U)\amalg F_U.
\end{equation} 

For any $K \leq_c G$ we define a closed subset and a (closed) subgroup by 
\begin{equation} \label{XDefEq}
X_K \defeq K \cap  \bigcup_{i=1}^n\bigcup_{g \in G} G_i^g, \quad \tilde K \defeq \langle X_K\rangle.
\end{equation}
We claim that if $K$ is either finitely generated or open in $G$,
and its Kurosh decomposition is given by
\begin{equation} \label{KuroshDecKEq}
K=\coprod_{i=1}^n \coprod_{j \in J_i} (G_i^{\alpha_{i,j}} \cap K)\amalg F_K
\end{equation}
(where $J_i$ is a set of representatives for $G_i\backslash G/K$) then for
\begin{equation} \label{YDefEq}
Y_K \defeq \bigcup_{k \in K} \Big( \bigcup_{i,j} (G_i^{\alpha_{i,j}} \cap K) \Big)^k
\end{equation}
we have $X_K = Y_K$.
Indeed, for every $1 \leq i \leq n$ and $g \in G$ there exists a $j \in J_i$ such that $g = t\alpha_{i,j}k$ for some 
$t \in G_i$ and $k \in K$.
Hence
\begin{equation}
G_i^g \cap K = G_i^{t\alpha_{i,j}k} \cap K = G_i^{\alpha_{i,j}k} \cap K = (G_i^{\alpha_{i,j}} \cap K)^k 
\stackrel{\ref{YDefEq}}{\subseteq} Y_K
\end{equation}
so $X_k \subseteq Y_k$ and our claim is established.
We thus have
\begin{equation} \label{FreeQuotEq}
K/\tilde K \stackrel{\ref{XDefEq}}{=} K/ \langle X_K\rangle = K/ \langle Y_K\rangle 
\stackrel{\ref{KuroshDecKEq}}{\cong} F_K.
\end{equation}

By \eqref{XDefEq},
\begin{equation} \label{InterXEq}
X_H = \bigcap_{H \leq U \leq_o V} X_U
\end{equation}
and we would like to show that 
\begin{equation} \label{limitEq}
\tilde H=\bigcap_{H\leq U\leq_o V} \tilde U.
\end{equation}
For that pick a homomorphism $\varphi$ from $G$ onto a finite group.
Finiteness implies that there exists an $L \leq_o V$ containing $H$ such that
\begin{equation}
\begin{split}
\varphi(\tilde H) &\stackrel{\ref{XDefEq}}{=} \varphi\Big(\langle X_H \rangle\Big) =  
\Big\langle \varphi(X_H) \Big\rangle 
\stackrel{\ref{InterXEq}}{=} \Big\langle \varphi\Big(\bigcap_{H \leq U \leq_o V} X_U\Big) \Big\rangle \\
&= \Big\langle \bigcap_{H \leq U \leq_o V} \varphi(X_U) \Big\rangle = \Big\langle \varphi(X_{L}) \Big\rangle
=  \varphi \Big(\langle X_{L} \rangle \Big) \\
&\stackrel{\ref{XDefEq}}{=} \varphi(\tilde L) \supseteq \varphi \Big( \bigcap_{H\leq U\leq_o V} \tilde U \Big)
\end{split}
\end{equation}  
so \eqref{limitEq} holds. 
It follows that
\begin{equation}
F_H \stackrel{\ref{FreeQuotEq}}{\cong} H/\tilde H \stackrel{\ref{limitEq}}{\cong} \varprojlim_{H\leq U\leq_o V}  U/\tilde U\stackrel{\ref{FreeQuotEq}}{\cong} \varprojlim_{H\leq U\leq_o V} F_U
\end{equation}
and we denote by $\eta_U \colon F_H \to F_U$ the projections from the inverse limit.

Since $F_H$ is finitely generated,
there is an $M \leq_o V$ such that 
\begin{equation}
d\big(\eta_M(F_H)\big) = d(F_H),
\end{equation}
and $\eta_M(F_H) \leq F_M$ is a free pro-$p$ group, so $\eta_M$ is an injection in view of the Hopfian property 
(see \cite[Proposition 2.5.2]{RZ}).
By \cite[Theorem 9.1.19]{RZ}, there exists an $R \leq_o F_M$ such that $\eta_M(F_H)$ is a free factor (and thus also a retract) of $R$.
Taking $N$ to be the preimage of $R$ under the quotient map $M \to M/ \tilde M = F_M$,
we note that the surjection $N \to R$ factors through $F_N$.
We conclude that $\eta_N(F_H)$ is a retract (and thus also a free factor, by a Frattini argument) of $F_N$,
since the homomorphism $F_N \to F_M$ maps the former injectively onto a retract of the image of the latter.
Hence
\begin{equation}
F_N = \eta_N(F_H) \amalg Q
\end{equation}
for some (free pro-$p$ subgroup) $Q \leq_c F_N$.

Set $F_0 \defeq \langle F_H \cup Q \rangle$
and note that since the images of $F_H$ and $Q$ under the projection $N \to N/\tilde N \cong F_N$ generate a free product, we must have 
\begin{equation}
F_0 = F_H \amalg Q.
\end{equation}
By \lemref{ProdRepLem}, we can replace $F_N$ by $F_0$ in the Kurosh decomposition \eqref{KurDec} of $U = N$,
so that every factor in the decomposition \eqref{HDecEq} of $H$ is a retract (respectively, a free factor) of a factor in the decomposition \eqref{KurDec} of $N$.
It follows that $H$ is a retract (respectively, a free factor) of $N$.
\end{proof}

The following is an immediate consequence of the proof of \thmref{virtretrfreeprod}.

\begin{cor} \label{SubProdCor}

Let $G_1, \dots, G_n$ be pro-$p$ groups,
set 
\begin{equation}
G \defeq \coprod_{i=1}^n G_i
\end{equation}
and let $H$ be a finitely generated subgroup of $G$.
Then there exists an open subgroup $U$ of $G$ containing $H$,
and subgroups $U_1, \dots, U_m$ of $U$ such that
\begin{equation}
U = \coprod_{i=1}^m U_i, \quad H = \coprod_{i=1}^m H_i
\end{equation}
where for each $1 \leq i \leq m$ we have $H_i \leq_c U_i$.

\end{cor}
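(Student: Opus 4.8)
The plan is to extract \corref{SubProdCor} from the machinery built in the proof of \thmref{virtretrfreeprod}, since the corollary asserts exactly the structural decomposition that was produced along the way, stripped of the retraction/free-factor bookkeeping. First I would recall the Kurosh decomposition \eqref{HDecEq} of the finitely generated subgroup $H$, noting as in the proof that only finitely many factors $G_i^{g_{i,j}} \cap H$ occur and that each is finitely generated, together with the finitely generated free pro-$p$ group $F_H$. I would then invoke the same inverse-limit argument — equations \eqref{InterXEq} through \eqref{limitEq} — to realize $F_H \cong \varprojlim_U F_U$ over open subgroups $U$ with $H \leq U \leq_o V$, and choose an open $M$ (and a corresponding $N$ if one wants $F_H$ to sit inside the free part as a nice subgroup) so that the projection $\eta_M \colon F_H \to F_M$ is injective onto a subgroup realizing $d(F_H)$ generators.

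Next I would apply \lemref{ProdRepLem} exactly as in the main proof: after arranging (via \cite[Theorem 9.1.19]{RZ} if desired, though for the corollary we do not even need $F_H$ to be a free factor of all of $F_M$ — we only need $F_H$ to be a free factor of $F_N$, which we obtain by the same argument) that $F_N = F_H \amalg Q$, we set $F_0 \defeq F_H \amalg Q$ and replace the free part $F_N$ in the Kurosh decomposition \eqref{KurDec} of $U \defeq N$ by $F_0$. This yields
\begin{equation}
U = \coprod_{i=1}^n \coprod_{j=1}^{r_i} (G_i^{g_{i,j}} \cap U) \amalg \coprod_{i=1}^n \coprod_{k=1}^{t_i} (G_i^{\beta_{i,k}} \cap U) \amalg F_H \amalg Q.
\end{equation}
Relabeling the free factors on the right-hand side as $U_1, \dots, U_m$, and matching each factor $G_i^{g_{i,j}} \cap H$ of \eqref{HDecEq} with the corresponding $G_i^{g_{i,j}} \cap U$ (setting $H_i \defeq G_i^{g_{i,j}} \cap H \leq_c G_i^{g_{i,j}} \cap U = U_i$), and pairing $F_H$ with the factor $F_H$ of $U$ (so $H_i = U_i = F_H$ there) and $Q$ with $H_i = \{1\} \leq_c Q$, we obtain $U = \coprod_{i=1}^m U_i$ and $H = \coprod_{i=1}^m H_i$ with $H_i \leq_c U_i$ for every $i$.

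The main obstacle — really the only subtle point — is ensuring that the matching between the factors of $H$ in \eqref{HDecEq} and the factors of $U$ in \eqref{KurDec} is legitimate, i.e.\ that after passing to a small enough open $U$ the double cosets $G_i g_{i,j} U$ are pairwise distinct so that each conjugate intersection $G_i^{g_{i,j}} \cap H$ genuinely embeds in a single distinct factor $G_i^{g_{i,j}} \cap U$ of \eqref{KurDec}; this is precisely the content of the choice of $V$ in the proof of \thmref{virtretrfreeprod} (the requirement $G_i g_{i,j} U \neq G_i g_{i,k} U$ for $j \neq k$), so it is already handled there and we simply reuse it. Everything else is a transcription of steps already carried out, discarding only the assertions about retracts and free factors, which require the hypothesis on the $G_i$ that we are now dropping; hence the corollary follows, as claimed, directly from the proof of \thmref{virtretrfreeprod}.
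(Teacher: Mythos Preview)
Your proposal is correct and follows exactly the route the paper intends: the paper gives no separate proof of \corref{SubProdCor} beyond declaring it ``an immediate consequence of the proof of \thmref{virtretrfreeprod}'', and what you have written is precisely that extraction, discarding only the retraction/free-factor hypotheses on the $G_i$ that are not needed for the bare containment $H_i \leq_c U_i$. Your identification of the one subtle point --- separating the double cosets $G_i g_{i,j} U$ by the choice of $V$ --- and your use of \cite[Theorem 9.1.19]{RZ} and \lemref{ProdRepLem} to place $F_H$ as a genuine free factor of the Kurosh decomposition of $N$ are exactly the ingredients the paper's proof supplies.
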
 

We shall need some auxiliary results for the proof of \thmref{BaumThm}.
The first is just a variant of Shapiro's lemma.

\begin{prop} \label{ShapiroProp}

Let $G$ be a pro-$p$ group acting continuously on a profinite space $X$,
and let $S \subseteq X$ be a finite subset for which the orbits $\{O_s\}_{s \in S}$ are pairwise distinct.
Then for the disjoint union $T$ of these orbits we have
\begin{equation}
H_1\big(G,\llbracket \mathbb{F}_pT \rrbracket\big) \cong \bigoplus_{s \in S} H_1(G_s, \mathbb{F}_p).
\end{equation}

\end{prop}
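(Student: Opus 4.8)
The plan is to realize the module $\llbracket \mathbb{F}_pT \rrbracket$ as a direct sum, over the chosen orbit representatives $s \in S$, of the induced (or rather, coinduced/permutation) modules attached to the single orbits $O_s$, and then to identify the homology of each summand via Shapiro's lemma. First I would note that since the orbits $\{O_s\}_{s\in S}$ are pairwise distinct, the profinite space $T$ decomposes as the disjoint union $T = \bigsqcup_{s \in S} O_s$ as a $G$-space, and hence the free profinite $\mathbb{F}_p$-module on $T$ splits $G$-equivariantly as
\begin{equation}
\llbracket \mathbb{F}_pT \rrbracket \cong \bigoplus_{s \in S} \llbracket \mathbb{F}_p O_s \rrbracket,
\end{equation}
the sum being finite because $S$ is finite. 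Since homology commutes with finite direct sums of coefficient modules, it suffices to show that $H_1\big(G, \llbracket \mathbb{F}_p O_s \rrbracket\big) \cong H_1(G_s, \mathbb{F}_p)$ for each $s$, where $G_s$ is the stabilizer of $s$.

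For this I would invoke the identification of the permutation module on an orbit with an induced module: the orbit map $g \mapsto g\cdot s$ induces a homeomorphism $G/G_s \cong O_s$ of profinite $G$-spaces, and consequently a $G$-module isomorphism
\begin{equation}
\llbracket \mathbb{F}_p O_s \rrbracket \cong \llbracket \mathbb{F}_p[G/G_s] \rrbracket \cong \mathbb{F}_p \,\widehat{\otimes}_{\llbracket \mathbb{F}_p G_s \rrbracket}\, \llbracket \mathbb{F}_p G \rrbracket,
\end{equation}
exhibiting it as the module induced up from the trivial $G_s$-module $\mathbb{F}_p$. Shapiro's lemma for profinite groups (as in \cite[Chapter 6]{RZ}, or \cite{NSW}) then gives
\begin{equation}
H_1\big(G, \llbracket \mathbb{F}_p[G/G_s]\rrbracket\big) \cong H_1(G_s, \mathbb{F}_p),
\end{equation}
and assembling these over $s \in S$ yields the claimed isomorphism.

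The main obstacle — or rather, the point requiring the most care — is ensuring that the various module identifications are valid in the completed/profinite category: one must check that $\llbracket \mathbb{F}_p(-)\rrbracket$ sends a disjoint union of profinite $G$-spaces to the corresponding direct sum of profinite $\llbracket\mathbb{F}_pG\rrbracket$-modules, and that the induced-module description of an orbit is compatible with the profinite topology on $G/G_s$ (here it is important that $G$ is pro-$p$, so that $G_s$ is closed and $G/G_s$ is again profinite). The hypothesis that $S$ is finite is what keeps the direct sum finite and hence avoids any subtlety about infinite (co)products of modules or about the interchange of homology with infinite limits; with that in hand, the proof is a routine concatenation of the decomposition, the induced-module identification, and Shapiro's lemma.
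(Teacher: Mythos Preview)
Your proposal is correct and follows essentially the same approach as the paper: decompose $\llbracket \mathbb{F}_pT\rrbracket$ as a finite direct sum over orbits, identify each orbit with $G/G_s$ so that the corresponding module is the induced module $\mathrm{Ind}_{G_s}^G\mathbb{F}_p$, and then apply Shapiro's lemma (the paper cites \cite[Theorem 6.10.8 (d)]{RZ}). The paper's proof is simply the chain of isomorphisms you describe, without the additional discussion of topological subtleties.
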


\begin{proof}

Applying Shapiro's lemma (see \cite[Theorem 6.10.8 (d)]{RZ}) we get
\begin{equation}
\begin{split}
H_1\big(G,\llbracket \mathbb{F}_pT \rrbracket\big) &= 
H_1\big(G,\llbracket \mathbb{F}_p \bigcup_{s \in S} O_s \rrbracket\big) \cong 
H_1\big(G, \bigoplus_{s \in S} \ \llbracket \mathbb{F}_p O_s \rrbracket\big) \\
&\cong \bigoplus_{s \in S} H_1\big(G,\llbracket \mathbb{F}_p O_s \rrbracket\big) 
\cong \bigoplus_{s \in S} H_1\big(G,\llbracket \mathbb{F}_p (G/G_s) \rrbracket\big) \\
&\cong \bigoplus_{s \in S} H_1(G, \mathrm{Ind}_{G_s}^{G} \mathbb{F}_p) \cong
\bigoplus_{s \in S} H_1(G_s, \mathbb{F}_p).
\end{split}
\end{equation}
\end{proof}

\begin{prop} \label{InjectProp}

Let $G$ be a pro-$p$ group acting continuously on a profinite space $X$,
and let $R \subseteq X$ be a closed $G$-invariant subset.
Then the natural inclusion $R \hookrightarrow X$ induces an embedding
\begin{equation}
\eta_R \colon H_1\big(G,\llbracket \mathbb{F}_pR\rrbracket\big) \hookrightarrow 
H_1\big(G,\llbracket \mathbb{F}_pX\rrbracket\big).
\end{equation} 

\end{prop}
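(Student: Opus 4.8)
The plan is to realise both free profinite modules as inverse limits of free modules on \emph{finite} $G$-sets, to observe that at each finite level the inclusion is split as a map of $G$-modules, and then to pass to the limit using continuity of homology. First, by continuity of the action, write $X = \varprojlim_i X_i$ with each $X_i$ a finite $G$-set (on which $G$ acts through a finite quotient) and with surjective transition maps. Letting $R_i$ be the image of $R$ in $X_i$, the set $R_i$ is $G$-invariant, $R = \varprojlim_i R_i$, and since clopen subsets of the closed subspace $R$ extend to clopen subsets of $X$ (zero-dimensionality plus normality), the family $\{R_i\}_i$ is cofinal among the finite continuous $G$-quotients of $R$. Hence $\llbracket \mathbb{F}_p R \rrbracket = \varprojlim_i \mathbb{F}_p[R_i]$ and $\llbracket \mathbb{F}_p X \rrbracket = \varprojlim_i \mathbb{F}_p[X_i]$, compatibly with the map induced by $R \hookrightarrow X$.

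The key algebraic point is the second step: each finite $G$-set $X_i$ is a disjoint union of $G$-orbits, and $R_i$, being $G$-invariant, is a union of some of them, so $\mathbb{F}_p[X_i] = \mathbb{F}_p[R_i] \oplus \mathbb{F}_p[X_i \setminus R_i]$ as $\mathbb{F}_p\llbracket G \rrbracket$-modules. Thus $\mathbb{F}_p[R_i] \hookrightarrow \mathbb{F}_p[X_i]$ is a split monomorphism of $G$-modules, and applying the additive functor $H_1(G,-)$ yields a split injection $H_1(G, \mathbb{F}_p[R_i]) \hookrightarrow H_1(G, \mathbb{F}_p[X_i])$ for every $i$. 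By functoriality these assemble into a morphism of inverse systems $\{H_1(G,\mathbb{F}_p[R_i])\}_i \to \{H_1(G,\mathbb{F}_p[X_i])\}_i$ that is injective at each level, so the induced map $\varprojlim_i H_1(G,\mathbb{F}_p[R_i]) \to \varprojlim_i H_1(G,\mathbb{F}_p[X_i])$ is injective, because $\varprojlim$ is left exact.

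Finally, I would invoke continuity of homology of a profinite group with profinite coefficients, namely that $H_1(G,-)$ carries a surjective inverse limit of finite modules to the inverse limit of the homology groups (see \cite{RZ}); together with the naturality of this identification it shows that the injective map of the previous paragraph is precisely $\eta_R$, which is therefore an embedding. I expect the only genuinely nontrivial ingredient to be this continuity statement for $H_1(G,-)$; the finite-level splitting, the cofinality of $\{R_i\}_i$, and the left-exactness of $\varprojlim$ are all routine. (One could equivalently organise the argument around the short exact sequence $0 \to \llbracket \mathbb{F}_p R \rrbracket \to \llbracket \mathbb{F}_p X \rrbracket \to \llbracket \mathbb{F}_p X \rrbracket/\llbracket \mathbb{F}_p R \rrbracket \to 0$, whose reduction at every finite level is $G$-split, forcing the connecting homomorphism $H_2 \to H_1$ to vanish in the limit; but this reformulation rests on the same continuity input, so I would present the inverse-limit version above.)
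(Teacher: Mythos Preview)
Your proof is correct and rests on the same three ingredients as the paper's: a $G$-equivariant splitting at each approximating level, continuity of $H_1(G,-)$ under inverse limits (the paper cites \cite[Lemma 3.2]{Mel} for this), and left exactness of $\varprojlim$. The only difference is organisational: rather than passing to finite quotients of both $R$ and $X$ and checking cofinality of the images $R_i$, the paper keeps $X$ fixed and approximates $R$ by the clopen $G$-invariant subsets $U\supseteq R$ (using \cite[Lemma 5.6.4 (a)]{RZ} to see that $R=\bigcap U$), which gives the splitting $\llbracket\mathbb{F}_pX\rrbracket=\llbracket\mathbb{F}_pU\rrbracket\oplus\llbracket\mathbb{F}_p(X\setminus U)\rrbracket$ directly and so avoids your cofinality step.
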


\begin{proof}

Let $U$ be a clopen $G$-invariant subset of $X$ containing $R$. 
Then
\begin{equation}
\llbracket \mathbb{F}_pX \rrbracket = \llbracket\mathbb{F}_pU \rrbracket\oplus 
\llbracket\mathbb{F}_p(X\setminus U)\rrbracket
\end{equation} 
hence 
\begin{equation}
H_1\big(G,\llbracket \mathbb{F}_pX\rrbracket\big) = 
H_1\big(G,\llbracket\mathbb{F}_pU\rrbracket\big) \oplus 
H_1\big(G,\llbracket\mathbb{F}_p(X\setminus U)\rrbracket\big)
\end{equation}
and in particular the inclusion $U\hookrightarrow X$ induces an embedding 
\begin{equation}
\eta_U \colon H_1\big(G,\llbracket\mathbb{F}_pU\rrbracket\big) \hookrightarrow 
H_1\big(G,\llbracket \mathbb{F}_pX\rrbracket\big).
\end{equation}
By \cite[Lemma 5.6.4 (a)]{RZ}, $R$ is an intersection of clopen $G$-invariant subsets of $X$, 
so by \cite[Lemma 3.2]{Mel} (which says that homology commutes with inverse limits in the second variable) we have
\begin{equation}
\eta_R = \varprojlim_U \eta_U
\end{equation} 
where the limit is taken over the clopen $G$-invariant subsets of $X$ containing $R$.
It follows that $\eta_R$ is an inverse limit of injections, so it is an injection.
\end{proof}

\begin{lem} \label{finiteness}

Let $G$ be a pro-$p$ group acting on a profinite space $X$ with finitely generated point stabilizers. 
Then $H_1(G, \llbracket \mathbb{F}_pX\rrbracket )$ is finite if and only if there are only finitely many $G$-orbits (in $X$) with a non-free $G$-action.

\end{lem}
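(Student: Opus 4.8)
The plan is to prove the two implications separately, leaning on \propref{ShapiroProp} and \propref{InjectProp} together with the standard collapse exact sequence for free profinite modules. First a preliminary remark: the $G$-action on the orbit $Gx \cong G/G_x$ is free precisely when $G_x = \{1\}$, and if $G_x \neq \{1\}$ then, being a nontrivial finitely generated pro-$p$ group, $G_x$ has a proper Frattini subgroup, so $H_1(G_x,\mathbb{F}_p) = G_x/\overline{G_x^p[G_x,G_x]}$ is a nontrivial finite $\mathbb{F}_p$-vector space; thus $p \le |H_1(G_x,\mathbb{F}_p)| < \infty$ for every non-free orbit. Moreover, each orbit $Gx$ is a continuous image of the compact group $G$, hence compact and therefore closed in $X$, so every finite union of orbits is a closed $G$-invariant subspace of $X$, to which \propref{ShapiroProp} and \propref{InjectProp} apply.

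For the ``only if'' direction I argue contrapositively. Suppose $X$ has infinitely many non-free orbits. Given $m \in \mathbb{N}$, choose points $s_1,\dots,s_m$ lying in $m$ pairwise distinct non-free orbits and set $R_m \defeq Gs_1 \cup \dots \cup Gs_m$, a closed $G$-invariant subspace. By \propref{InjectProp} the inclusion $R_m \hookrightarrow X$ induces an embedding $H_1(G,\llbracket\mathbb{F}_pR_m\rrbracket) \hookrightarrow H_1(G,\llbracket\mathbb{F}_pX\rrbracket)$, while \propref{ShapiroProp} identifies $H_1(G,\llbracket\mathbb{F}_pR_m\rrbracket)$ with $\bigoplus_{i=1}^m H_1(G_{s_i},\mathbb{F}_p)$, of order at least $p^m$. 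Hence $|H_1(G,\llbracket\mathbb{F}_pX\rrbracket)| \ge p^m$ for every $m$, so $H_1(G,\llbracket\mathbb{F}_pX\rrbracket)$ is infinite.

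For the ``if'' direction, suppose the non-free orbits are $O_1,\dots,O_r$; pick $s_i \in O_i$ and put $R \defeq Gs_1 \cup \dots \cup Gs_r$, again closed and $G$-invariant. By \propref{ShapiroProp}, $H_1(G,\llbracket\mathbb{F}_pR\rrbracket) \cong \bigoplus_{i=1}^r H_1(G_{s_i},\mathbb{F}_p)$ is finite. Collapsing $R$ to a point yields the profinite pointed space $X/R$ together with the short exact sequence of profinite $\mathbb{F}_p\llbracket G\rrbracket$-modules
\begin{equation*}
0 \longrightarrow \llbracket\mathbb{F}_pR\rrbracket \longrightarrow \llbracket\mathbb{F}_pX\rrbracket \longrightarrow \llbracket\mathbb{F}_p(X/R)\rrbracket \longrightarrow 0,
\end{equation*}
where $\llbracket\mathbb{F}_p(X/R)\rrbracket$ is the free profinite module on the pointed quotient. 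Since $R$ contains every non-free orbit, $G$ fixes the distinguished point of $X/R$ and acts freely on its complement $X \setminus R$; consequently $\llbracket\mathbb{F}_p(X/R)\rrbracket$ is a free (hence projective) profinite $\mathbb{F}_p\llbracket G\rrbracket$-module, so $H_n(G,\llbracket\mathbb{F}_p(X/R)\rrbracket) = 0$ for all $n \ge 1$. The homology long exact sequence then shows that $H_1(G,\llbracket\mathbb{F}_pR\rrbracket) \to H_1(G,\llbracket\mathbb{F}_pX\rrbracket)$ is surjective, so $H_1(G,\llbracket\mathbb{F}_pX\rrbracket)$ is a quotient of the finite group $H_1(G,\llbracket\mathbb{F}_pR\rrbracket)$, hence finite.

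The main obstacle — indeed the only nonformal point — is the assertion in the last paragraph that $\llbracket\mathbb{F}_p(X/R)\rrbracket$ is projective because $G$ acts freely off the basepoint. This rests on the standard machinery of free profinite modules (existence and exactness of the collapse sequence, and the fact that the free profinite $\mathbb{F}_p\llbracket G\rrbracket$-module functor sends a pointed $G$-space with free action away from the basepoint to a free profinite $\mathbb{F}_p\llbracket G\rrbracket$-module; see \cite{RZ}), but some care is needed since the free part $X\setminus R$ need not be compact, which is exactly why one works with the compact pointed quotient $X/R$ rather than with $X\setminus R$ directly. Everything else is bookkeeping with \propref{ShapiroProp} and \propref{InjectProp}.
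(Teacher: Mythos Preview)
Your proof is correct and follows essentially the same line as the paper's: both directions use \propref{ShapiroProp} and \propref{InjectProp} exactly as you do, and for the ``if'' direction the paper likewise collapses the union $T$ of non-free orbits to a point, invokes the short exact sequence $0 \to \llbracket\mathbb{F}_pT\rrbracket \to \llbracket\mathbb{F}_pX\rrbracket \to \llbracket\mathbb{F}_p(X/T,*)\rrbracket \to 0$ (citing \cite[Exercise~5.2.4(b)]{RZ}), and shows the quotient module is projective via \cite[Proposition~2.3]{ZM} and free via \cite[Corollary~7.5.4]{Wilson}. The only cosmetic difference is that the paper records the resulting map on $H_1$ as an isomorphism (using $H_2$ vanishing as well) rather than merely a surjection, but for the finiteness conclusion your surjectivity is all that is needed.
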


\begin{proof} 

Let $T \subseteq X$ be the set of points with non-trivial $G$-stabilizers, that is
\begin{equation}
T \defeq \big\{x \in X \ | \ G_x \neq \{1\} \big\}.
\end{equation} 
If $G \backslash T$ is finite then $T$ is closed in $X$ so by \cite[Exercise 5.2.4 (b)]{RZ} we have
\begin{equation}
\llbracket \mathbb{F}_pX \rrbracket / \llbracket \mathbb{F}_pT \rrbracket \cong
\llbracket \mathbb{F}_p(X/T,*) \rrbracket 
\end{equation}
and we denote this profinite $\llbracket \mathbb{F}_pG \rrbracket$-module by $M$ (here $(X/T,*)$ is a pointed profinite space with $T=*$ as the distinguished point).
Clearly, $G$ acts freely on the pointed profinite space $(X/T,*)$, so by \cite[Proposition 2.3]{ZM} $M$ is a projective $\llbracket \mathbb{F}_pG \rrbracket$-module.
By \cite[Corollary 7.5.4]{Wilson}, $M$ is free,
so from \cite[Proposition 6.3.4 (b)]{RZ} we get that $H_n(G, M)=0$ for $n \geq 1$. 

Proposition 6.3.4 (c) from \cite{RZ} associates to the short exact sequence
\begin{equation}
0 \rightarrow \llbracket \mathbb{F}_pT \rrbracket \rightarrow 
\llbracket \mathbb{F}_pX \rrbracket \rightarrow M \rightarrow 0
\end{equation} 
a long exact sequence of homology
\begin{equation}
0 = H_{2}(G,M) \rightarrow 
H_1(G,\llbracket \mathbb{F}_pT \rrbracket) \rightarrow 
H_1(G,\llbracket \mathbb{F}_pX \rrbracket) \rightarrow 
H_1(G,M)=0
\end{equation} 
from which we conclude that
\begin{equation} \label{TXCompEq}
H_1(G,\llbracket \mathbb{F}_pT \rrbracket)\cong H_1(G,\llbracket \mathbb{F}_pX \rrbracket).
\end{equation}  
Taking a section $S$ of $G \backslash T$ in $T$,
and applying \propref{ShapiroProp}
we find that
\begin{equation}
H_1(G,\llbracket \mathbb{F}_pX \rrbracket) \stackrel{\ref{TXCompEq}}{\cong} 
H_1(G,\llbracket \mathbb{F}_pT \rrbracket) \cong 
\bigoplus_{s \in S} H_1(G_s, \mathbb{F}_p)
\end{equation} 
and conclude that $H_1(G,\llbracket \mathbb{F}_pX \rrbracket)$ is finite since by assumption, $S$ is finite and $G_s$ is finitely generated for each $s \in S$.

Conversely, assume that there is an $n \in \mathbb{N}$ with
$|H_1(G,\llbracket \mathbb{F}_pX \rrbracket)| < n$ 
and toward a contradiction, suppose that there are $n$ distinct $G$-orbits 
\begin{equation}
O_{x_1}, \dots, O_{x_n}
\end{equation}
with nontrivial point stabilizers. 
We denote by $R$ the union of these orbits, and invoke \propref{ShapiroProp} and \propref{InjectProp}
to obtain a contradiction:
\begin{equation}
n > |H_1(G,\llbracket \mathbb{F}_pX \rrbracket)| \stackrel{\ref{InjectProp}}{\geq} 
|H_1(G,\llbracket \mathbb{F}_pR \rrbracket)| \stackrel{\ref{ShapiroProp}}{=} 
|\bigoplus_{i = 1}^n H_1(G_{x_i}, \mathbb{F}_p)| \geq n.
\end{equation} 
\end{proof}

In light of \corref{SubProdCor}, it suffices to prove the following in order to establish \thmref{BaumThm}.

\begin{thm} \label{H fitted} 

Let $G_1, \dots, G_n$ be pro-$p$ groups satisfying Howson's property,
and let $H_i \leq_c G_i$ be finitely generated subgroups for $1 \leq i \leq n$.
Set
\begin{equation} \label{FreeProdGHEq}
G \defeq \coprod_{i=1}^n G_i, \quad H \defeq \coprod_{i=1}^n H_i,
\end{equation}
and let $K$ be a finitely generated subgroup of $G$.
Then $d(K \cap H) < \infty$.

\end{thm}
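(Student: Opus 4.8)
The plan is to realize $G = \coprod_i G_i$ as acting on a profinite space, apply the homological criterion of \lemref{finiteness} to both the subgroup $H$ and the subgroup $K$, and then compare. More precisely, recall that a finitely generated pro-$p$ group $\Gamma$ is free pro-$p$ if and only if $\mathrm{cd}_p(\Gamma) \leq 1$, and that for $K \leq_c G$ the group $K \cap H$ being finitely generated can be controlled by understanding the action of $K$ on the coset space (or, more usefully here, on the profinite space $X = \coprod_{i=1}^n G/G_i$ — or rather the space of ``branches'' of the standard pro-$p$ tree on which $G$ acts for the free product decomposition \eqref{FreeProdGHEq}). The point stabilizers of the $G$-action on $X$ are exactly the conjugates $G_i^g$, so by \lemref{finiteness} $H_1(G, \llbracket \mathbb{F}_p X\rrbracket)$ being finite is tied to there being finitely many orbits with non-free action.

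First I would set up the $G$-action: let $X$ be the disjoint union of the coset spaces $G/G_i$, with $G$ acting by left translation; the stabilizer of the coset $gG_i$ is $G_i^{g}$, and these are finitely generated by hypothesis. Restricting the action to $H$ and then to $K$, the key input is the Kurosh subgroup theorem (\cite[Theorem 2.1]{Rib}): since $H = \coprod_{i=1}^n H_i$ with each $H_i$ finitely generated, the $H$-orbits on $X$ with a non-free $H$-action correspond to the double cosets $G_i \backslash G / H$ giving nontrivial intersections $G_i^{g} \cap H$, and by the Kurosh decomposition of $H$ (which has no free part beyond the obvious, and finitely many factors) there are only finitely many such orbits, each with finitely generated stabilizer $G_i^{g}\cap H = H_i^{h}$ for suitable $h$. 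Hence by \lemref{finiteness}, $H_1(H, \llbracket \mathbb{F}_p X\rrbracket)$ is finite.

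Next I would restrict the $H$-action on $X$ to the subgroup $K \cap H$. Each orbit of $K\cap H$ on $X$ has stabilizer $G_i^{g} \cap (K \cap H) = (G_i^{g}\cap H) \cap K$, which is the intersection of the \emph{finitely generated} group $G_i^{g}\cap H = H_i^{h}$ with $K$. Here is where the hypothesis that each $G_i$ satisfies Howson's property enters: since $H_i^{h}$ and the relevant ``slice'' of $K$ both live inside $G_i^{h}$ — more carefully, one applies the Kurosh theorem to $K$ as well, so that $G_i^g \cap K$ is finitely generated, and then $G_i^g \cap K \cap H$ is an intersection of two finitely generated subgroups inside the conjugate $G_i^{g'}$ of a Howson group — we conclude that every point stabilizer of the $K\cap H$-action on $X$ is finitely generated. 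Moreover the $K\cap H$-orbits on $X$ with non-free action inject (via \propref{InjectProp}, after passing through $H$-orbits) into the finitely many $H$-orbits with non-free action, so there are only finitely many. Applying \lemref{finiteness} in the other direction, $H_1(K\cap H, \llbracket \mathbb{F}_p X\rrbracket)$ is finite.

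Finally I would convert this homological finiteness back into finite generation of $K\cap H$. The group $K \cap H$ acts on $X$ with finitely generated stabilizers and finitely many non-free orbits, so by the Kurosh subgroup theorem applied to the free product structure that $X$ encodes, $K\cap H$ decomposes as a free pro-$p$ product of finitely many finitely generated pieces (the stabilizers) together with a free pro-$p$ group $F$; finiteness of $H_1(K\cap H, \llbracket\mathbb{F}_p X\rrbracket) \cong \bigoplus H_1(\text{stabilizers})$ together with the analogue of \eqref{FreeQuotEq} forces the free part $F$ to be finitely generated, whence $d(K\cap H) < \infty$. The main obstacle I anticipate is the bookkeeping in the middle step: one must check that the non-free $(K\cap H)$-orbits genuinely biject with a subset of the non-free $H$-orbits (so that finiteness is inherited) and that each resulting point stabilizer is an intersection of two finitely generated subgroups \emph{of the same $G_i$} (up to conjugacy) so that the Howson hypothesis on $G_i$ applies — this requires carefully tracking conjugating elements through the two nested Kurosh decompositions, which is exactly the kind of argument the earlier lemmas (\prop\ref{InjectProp}, \lemref{finiteness}) were built to streamline.
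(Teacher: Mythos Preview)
Your proposal has a genuine gap at the crucial step. You claim that ``the $K\cap H$-orbits on $X$ with non-free action inject (via \propref{InjectProp}, after passing through $H$-orbits) into the finitely many $H$-orbits with non-free action.'' This is not correct: the natural map from $(K\cap H)$-orbits to $H$-orbits sends each $(K\cap H)$-orbit to the $H$-orbit containing it, and there is no reason for this to be injective --- a single $H$-orbit can break into infinitely many $(K\cap H)$-orbits. \propref{InjectProp} is a statement about homology for a closed invariant \emph{subspace}, not about orbit counts under a \emph{subgroup}, so it does not help here. The same objection applies if you try to compare with $K$-orbits instead of $H$-orbits. There is also a secondary problem in your final step: you invoke the Kurosh theorem for $K\cap H$, but that theorem is only available for finitely generated or open subgroups, which is exactly what you are trying to prove; and even granting a decomposition, finiteness of $H_1(K\cap H,\llbracket\mathbb{F}_pX\rrbracket)$ controls the stabilizer part but says nothing about the rank of the free part $F$.

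The paper supplies precisely the missing idea. Rather than working only with $X=\coprod_i G/G_i$, it introduces a \emph{second} Mayer--Vietoris sequence, coming from the free product $H=\coprod_i H_i$ and the $(K\cap H)$-action on $\coprod_i H/H_i$. The two sequences are linked by a commutative square whose left vertical map $\alpha\colon \llbracket\mathbb{F}_p(K\cap H)\backslash H\rrbracket \to \llbracket\mathbb{F}_p K\backslash G\rrbracket$ is \emph{injective}, because for $h\in H$ one has $h\in K \iff h\in K\cap H$. A short diagram chase (using that $\Ker(\delta)$ is finite since $K$ is finitely generated) then shows that both $\Ker(\sigma)$ and $\Ker(\beta)$ are finite. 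Finiteness of $\Ker(\beta)$ means that the map $(K\cap H)\backslash H/H_i \to K\backslash G/G_i$ has finite fibres, and combined with the (correct) fact that $K$ has only finitely many non-free orbits on $G/G_i$, this gives finitely many non-free $(K\cap H)$-orbits on $H/H_i$. Your Howson argument for the stabilizers is then exactly right, \lemref{finiteness} applies, and the exact sequence for $H$ (not for $G$) sandwiches $H_1(K\cap H,\mathbb{F}_p)$ between two finite groups, finishing the proof without any appeal to Kurosh for $K\cap H$.
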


\begin{proof} 

Consider the exact sequences
\begin{equation} \label{GXctSeqEq}
0 \rightarrow  \llbracket\mathbb{F}_pG\rrbracket^{n-1} \rightarrow 
\bigoplus_{i=1}^{n} \llbracket \mathbb{F}_p(G/G_i)\rrbracket \rightarrow 
\mathbb{F}_p\rightarrow 0,
\end{equation}
\begin{equation} \label{HXctSeqEq}
0\rightarrow \llbracket\mathbb{F}_pH\rrbracket^{n-1} \rightarrow
\bigoplus_{i=1}^{n}\llbracket\mathbb{F}_p(H/H_i)\rrbracket\rightarrow \mathbb{F}_p\rightarrow 0
\end{equation} 
functorially associated with the free pro-$p$ products from \eqref{FreeProdGHEq} by \cite[Theorem 4.1, proof of Proposition 2.7]{RZ2}. 
Applying the functor $\mathbb{F}_p\hat\otimes_{\llbracket\mathbb{F}_pK\rrbracket} -$ to \eqref{GXctSeqEq} 
and the functor $\mathbb{F}_p\hat\otimes_{\llbracket\mathbb{F}_p(K\cap H)\rrbracket} -$ to \eqref{HXctSeqEq}
we get the associated long homological exact sequences 
(these are instances of the Mayer-Vietoris sequences, see \cite[Proposition 6.3.4 (c)]{RZ}) 
\begin{equation} \label{FLongXSeqEq}
\begin{split}
0 \rightarrow &\bigoplus_{i=1}^{n} H_1\big(K, \llbracket \mathbb{F}_p (G/G_i) \rrbracket\big) 
\rightarrow H_1(K, \mathbb{F}_p) \rightarrow 
\llbracket \mathbb{F}_p (K\backslash G) \rrbracket^{n-1} \overset{\delta}{\rightarrow} \\
&\bigoplus_{i=1}^{n} \llbracket \mathbb{F}_p (K\backslash G/G_i) \rrbracket \rightarrow \mathbb{F}_p \rightarrow 0,
\end{split}
\end{equation}
\begin{equation} \label{SLongXSeqEq}
\begin{split}
0 \rightarrow &\bigoplus_{i=1}^{n} H_1\big(K \cap H, \llbracket \mathbb{F}_p (H/H_i) \rrbracket\big) 
\rightarrow H_1(K \cap H,\mathbb{F}_p) \rightarrow \\
&\llbracket \mathbb{F}_p (K\cap H)\backslash H \rrbracket^{n-1} \overset{\sigma}{\rightarrow}  
\bigoplus_{i=1}^{n} \llbracket\mathbb{F}_p (K \cap H) \backslash H/H_i \rrbracket \rightarrow \mathbb{F}_p \rightarrow 0.
\end{split}
\end{equation}  
The fact that the sequences start with a $0$ follows from the freeness of the modules 
$\llbracket\mathbb{F}_pG \rrbracket, \llbracket\mathbb{F}_pH \rrbracket$ (see \cite[Corollary 5.7.2 (a), Proposition 6.3.4 (b)]{RZ}).
Applying the functor $H_0(K \cap H, -)$ to \eqref{GXctSeqEq} and \eqref{HXctSeqEq},
and using the morphism $H_0(K \cap H, -) \to H_0(K,-)$ of functors (from $G$-modules to $\mathbb{F}_p$-spaces), 
we obtain the following commutative diagram
\begin{equation}
\xymatrix{\llbracket\mathbb{F}_p(K\backslash G)\rrbracket^{n-1} \ar[r]^>>>>>>>>>>{\delta} &\bigoplus_{i=1}^{n}\llbracket\mathbb{F}_p(K\backslash G/G_i)\rrbracket\\
            \llbracket\mathbb{F}_p(K\cap H)\backslash H\rrbracket^{n-1} \ar[r]^<<<<{\sigma}\ar[u]^{\alpha} &\bigoplus_{i=1}^{n}\llbracket\mathbb{F}_p(K\cap H)\backslash H/H_i\rrbracket\ar[u]^{\beta}}
\end{equation}
and we want to show that $\mathrm{Ker}(\sigma)$ and $\mathrm{Ker}(\beta)$ are finite. 

Exactness of \eqref{FLongXSeqEq} implies that $\mathrm{Ker}(\delta)$ is an image of $H_1(K,\mathbb{F}_p)$ so it is finite as $K$ is finitely generated. 
Since $\alpha$ is an injection, it follows at once that $\mathrm{Ker}(\delta\alpha)$ is finite as well. 
Commutativity tells us that $\mathrm{Ker}(\delta\alpha) = \mathrm{Ker}(\beta\sigma)$
so the latter is finite, and thus its subspace $\mathrm{Ker}(\sigma)$ is also finite.
Furthermore,
\begin{equation}
\mathrm{Ker}(\beta) \cap \mathrm{Im}(\sigma) = \sigma\big(\mathrm{Ker}(\beta\sigma)\big)
\end{equation}
so
\begin{equation}
|\mathrm{Ker}(\beta) \cap \mathrm{Im}(\sigma)| = | \sigma\big(\mathrm{Ker}(\beta\sigma)\big)| \leq
| \mathrm{Ker}(\beta\sigma)| < \infty.
\end{equation}
Exactness of \eqref{SLongXSeqEq} implies that $\mathrm{Im}(\sigma)$ has codimension $1$,
so $\mathrm{Ker}(\beta) \cap \mathrm{Im}(\sigma)$ is of codimension at most $1$ in $\mathrm{Ker}(\beta)$ 
and we conclude that the latter is finite. 

For each $1 \leq i \leq n$ consider the action of $K$ on $G/G_i$,
and note that every point stabilizer is the intersection of $K$ with a conjugate of $G_i$,
which by the Kurosh subgroup theorem, is a free factor of $K$ and thus finitely generated.
Moreover, the finiteness of $H_1(K, \llbracket\mathbb{F}_p(G/G_i)\rrbracket)$            
follows from \eqref{FLongXSeqEq}, so by \lemref{finiteness}, $K$ acts freely on the complement of a finite set of orbits, and thus $K \cap H$ acts on this complement freely as well.

The fact that the fibers of $\beta$ are finite implies that only finitely many $K \cap H$-orbits in $H/H_i$ are mapped to each $K$-orbit in $G/G_i$ under the natural inclusion $H/H_i \hookrightarrow G/G_i$,
so we deduce that there are only finitely many $H\cap K$-orbits in $H/H_i$ with a non-free action.
For every point stabilizer $L$ for the action of $K \cap H$ on $H/H_i$ there exists an $h \in H$ such that
\begin{equation}
L = K \cap H \cap H_i^h = K \cap H_i^{h} = (K \cap G_i^{h}) \cap H_i^{h}
\end{equation}
which is finitely generated since $G_i$ satisfies Howson's property.          
We can now invoke \lemref{finiteness} to conclude that $H_1(K \cap H, \llbracket \mathbb{F}_p(H/H_i)\rrbracket)$ is finite. 
Combining this with the finiteness of $\mathrm{Ker}(\sigma)$ and the exactness of \eqref{SLongXSeqEq},
we see that $H_1(K \cap H, \mathbb{F}_p)$ is finite, as required.
\end{proof}

\subsection{Finitely generated M. Hall groups}

%
%
%
%
%
%
%
%
%
%
%
%

We begin by recalling from \cite[Lemma 4.3]{Rib} that the M. Hall property is hereditary.

\begin{prop} \label{HallHeredProp}

Let $G$ be an M. Hall pro-$p$ group, and let $H$ be a finitely generated subgroup. 
Then $H$ is M. Hall as well.

\end{prop}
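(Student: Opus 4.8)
The plan is to deduce Proposition~\ref{HallHeredProp} directly from the definitions together with the virtual M. Hall structure of $G$. Recall that $G$ being M. Hall means every finitely generated subgroup of $G$ is a free factor of some open subgroup. Let $H \leq_c G$ be finitely generated. First I would apply the M. Hall property of $G$ to $H$ itself: there is an open subgroup $W \leq_o G$ with $H$ a free factor of $W$, say $W = H \amalg W_0$ for some $W_0 \leq_c W$.

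Next, let $K$ be an arbitrary finitely generated subgroup of $H$; I must produce an open subgroup of $H$ having $K$ as a free factor. Since $K$ is also a finitely generated subgroup of $G$, apply the M. Hall property of $G$ once more to get $V \leq_o G$ with $K$ a free factor of $V$. Now intersect: $U \defeq V \cap W \leq_o G$ is open, hence $U \cap H \leq_o H$ is an open subgroup of $H$ containing $K$. I would then argue that $K$ is a free factor of $U \cap H$. The point is that $U \leq_o V$, so by the Kurosh subgroup theorem applied to the decomposition $V = K \amalg V_0$, the open subgroup $U$ inherits a Kurosh decomposition in which one of the free factors is $K$ itself (the double coset of the identity, since $K \leq U$); concretely $U = K \amalg U_1$ for some $U_1$. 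Intersecting this with $H$ and using the Kurosh subgroup theorem once more (now for the subgroup $U \cap H \leq_c U$, noting $U\cap H$ contains $K = K\cap U\cap H$), the factor $K$ again appears as a free factor: $U \cap H = K \amalg (\text{rest})$.

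The main obstacle — and the only step requiring genuine care — is the bookkeeping in the two Kurosh decompositions, i.e.\ verifying that the copy of $K$ really does split off as a free factor at each stage rather than merely sitting inside one. For the first reduction ($U \leq_o V$, $V = K \amalg V_0$), this is the standard fact that a free factor of a group is a free factor of any open (finite index) subgroup containing it, which is immediate from \cite[Theorem 9.1.9]{RZ}: $K$ is the intersection of $U$ with a conjugate of the factor $K$ of $V$ corresponding to the trivial double coset, and all such factors are free factors of $U$. For the second reduction ($U \cap H \leq_c U$, $U = K \amalg U_1$), I use that $K \leq_c U \cap H$, so again in the Kurosh decomposition of $U \cap H$ coming from \cite[Theorem 2.1]{Rib} the factor $K^{1} \cap (U\cap H) = K$ occurs, and it is a free factor. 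Since $U \cap H$ is open in $H$ and contains $K$ as a free factor, $H$ is M. Hall, completing the proof. (This is exactly the route of \cite[Lemma 4.3]{Rib}; I am simply re-deriving it.)
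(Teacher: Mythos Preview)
Your argument is correct and, as you note, follows the route of \cite[Lemma 4.3]{Rib}. The paper itself gives no proof of \propref{HallHeredProp} at all: it simply cites \cite[Lemma 4.3]{Rib} and moves on, so there is nothing to compare beyond your re-derivation of Ribes's lemma. One small remark: your step~1 (finding $W \leq_o G$ with $W = H \amalg W_0$) is never used in the rest of the argument---you could take $U = V$ directly, since $V \cap H$ is already open in $H$ and contains $K$, and the two Kurosh reductions go through verbatim. Also worth making explicit (you use it implicitly) is that $U \cap H$ is finitely generated, being open in the finitely generated pro-$p$ group $H$, so that \cite[Theorem 2.1]{Rib} indeed applies.
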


The proof of the following lemma shows that a pro-$p$ M. Hall group satisfies the ascending chain condition on finite subgroups.

\begin{lem} \label{maximal finite} 
Any pro-$p$ M. Hall group $G$ has a maximal finite subgroup.
\end{lem}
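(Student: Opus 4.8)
The plan is to prove the slightly stronger assertion stated just before the lemma, namely that a pro-$p$ M. Hall group $G$ satisfies the ascending chain condition on finite subgroups. This suffices: the poset of finite subgroups of $G$ is nonempty (it contains $\{1\}$), so once we know it has no infinite strictly ascending chain, every finite subgroup lies below a maximal one, and in particular $G$ has a maximal finite subgroup.

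To establish the ascending chain condition, I would argue by contradiction, assuming there is an infinite strictly ascending chain $F_1 \lneq F_2 \lneq \cdots$ of (necessarily finite $p$-)subgroups of $G$. Since the chain is strictly increasing we may re-index so that $F_1 \neq \{1\}$. As $F_1$ is finite it is topologically finitely generated, so the M. Hall hypothesis provides an open subgroup $U \leq_o G$ containing $F_1$ together with a free pro-$p$ decomposition $U = F_1 \amalg W$; let $\pi \colon U \to F_1$ be the canonical retraction (the identity on $F_1$, trivial on $W$).

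The heart of the argument is the claim that $F_1$ is a maximal finite subgroup of $U$; equivalently, every finite subgroup $E \leq U$ with $F_1 \leq E$ equals $F_1$. To see this, apply the pro-$p$ Kurosh subgroup theorem \cite[Theorem 2.1]{Rib} to $E$ inside $U = F_1 \amalg W$: the free part of the decomposition is a finite free pro-$p$ group, hence trivial, and a finite free pro-$p$ product of more than one nontrivial factor is infinite, so all but at most one Kurosh factor of $E$ vanish; since $E \supseteq F_1 \neq \{1\}$, this forces $E^{u} \leq F_1$ or $E^{u} \leq W$ for some $u \in U$. The second case is impossible, because then $\pi(E^{u}) = \{1\}$, whereas $\pi(E^{u})$ is a conjugate in $F_1$ of $\pi(E)$, which contains $\pi(F_1) = F_1 \neq \{1\}$. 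Hence $E^{u} \leq F_1$, so $|E| = |E^{u}| \leq |F_1| \leq |E|$, giving $E = F_1$. Applying this to $E = F_n \cap U$ (which contains $F_1$ since $F_1 \leq F_n$ and $F_1 \leq U$) yields $F_n \cap U = F_1$, whence
\begin{equation*}
|F_n| = [F_n : F_n \cap U]\,|F_1| \leq [G : U]\,|F_1| \qquad \text{for all } n,
\end{equation*}
contradicting the unboundedness of $|F_n|$. This contradiction completes the proof.

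I expect the only genuinely delicate point to be the reduction of a finite subgroup of a free pro-$p$ product to one of the two free factors; this is a routine consequence of the pro-$p$ Kurosh theorem, but it is precisely where the normalization $F_1 \neq \{1\}$ is used, by way of the retraction $\pi$. Everything else is elementary index counting.
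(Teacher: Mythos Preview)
Your proof is correct and follows essentially the same approach as the paper: pick a nontrivial finite subgroup, use the M. Hall property to realize it as a free factor of an open subgroup $U$, show via the Kurosh subgroup theorem that it is maximal finite in $U$, and then bound the order of any finite overgroup by an index argument. The only cosmetic difference is that the paper observes directly that $K$ appears as a Kurosh factor of any finite $M \supseteq K$ in $U$ (taking the identity double coset), whereas you argue that $E$ is conjugate into one of the two factors and use the retraction $\pi$ to rule out $W$; both routes yield the same maximality statement.
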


\begin{proof}

Let $K$ be a nontrivial finite subgroup of $G$.
Since $G$ is M. Hall, there exists an open subgroup $U$ of $G$ such that $U = K \amalg L$ for some $L \leq_c U$.

We claim that $K$ is a maximal finite subgroup of $U$.
Indeed, if $M$ is a finite subgroup of $U$ that contains $K$, 
then by the Kurosh subgroup theorem $K$ is a free factor of $M$.
Hence if toward a contradiction $K \lneq M$, the latter is a nontrivial free pro-$p$ product, so it is infinite by \cite[Theorem 9.1.6]{RZ}. 
 
Therefore, if $R$ is a finite subgroup of $G$ containing $K$ then $R \cap U = K$. 
We conclude that $[R : K] \leq [G : U]$ and thus $|R| \leq |K|[G : U]$.
\end{proof}

On our way to showing that finitely generated freely indecomposable infinite pro-$p$ M. Hall groups are torsion-free, 
we need the following claim. 

\begin{prop} \label{index p} 

Let $G$ be a finitely generated freely indecomposable infinite pro-$p$ M.Hall group, 
and let $Q$ be a nontrivial maximal finite subgroup. 
Then there exists a finitely generated freely indecomposable infinite subgroup $K$ of $G$ having a subgroup $R$ of index $p$ such that $Q$ is a free factor of $R$.

\end{prop}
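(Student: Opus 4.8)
The plan is to choose an open subgroup $V \le_o G$ of smallest possible index among those admitting $Q$ as a free factor, and then to argue by a direct case analysis. Write $\mu(G,Q)$ for this minimal index; it is a well-defined power of $p$ and it is finite, because the M.\ Hall property of $G$, applied to the finite subgroup $Q$, provides an open subgroup of $G$ having $Q$ as a free factor. Fix such a $V$ with $[G:V]=\mu(G,Q)$, say $V=Q\amalg L$. Then $L\ne\{1\}$, for otherwise $V=Q$ would be a finite open subgroup of the infinite group $G$; and $V\ne G$, for otherwise $G=Q\amalg L$ would contradict free indecomposability. Hence $\mu(G,Q)=p^m$ with $m\ge 1$, and $V\cong Q\amalg L$ is infinite by \cite[Theorem 9.1.6]{RZ}. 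Since conjugating a pair $(K,R)$ by an element of $G$ preserves all the asserted properties, it suffices to produce $K$ and $R$ with \emph{some} conjugate of $Q$ as a free factor of $R$; in particular we are free to replace $Q$ (and correspondingly $V$ and $L$) by a conjugate whenever convenient. If $m=1$ we are already finished: take $K\defeq G$ and $R\defeq V$.

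Assume now $m\ge 2$, and choose a subgroup $W$ with $V\le W\le_o G$ and $[W:V]=p$, so that $[G:W]=p^{m-1}$. By \propref{HallHeredProp}, $W$ is a finitely generated, infinite, M.\ Hall pro-$p$ group. If $W$ is freely indecomposable, then $K\defeq W$ and $R\defeq V$ satisfy the conclusion. Otherwise, by splitting off free factors repeatedly — a process that terminates since each nontrivial split strictly decreases the minimal number of generators $d(\cdot)$, which is finite and additive over free pro-$p$ products — we may write $W=H_1\amalg\dots\amalg H_r$ with $r\ge 2$ and each $H_j$ nontrivial and freely indecomposable. Applying the Kurosh subgroup theorem \cite[Theorem 2.1]{Rib} to the finite (hence finitely generated and freely indecomposable) subgroup $Q$ of $W$, we see that $Q$ is conjugate into one of the factors; after conjugating we may assume $Q\le H_1$.

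The group $H_1$ is finitely generated (being a free factor of $W$), M.\ Hall by \propref{HallHeredProp}, freely indecomposable, and contains $Q$ as a maximal finite subgroup (any finite subgroup of $H_1\le G$ containing $Q$ equals $Q$). It is infinite: otherwise $H_1=Q$, whence $W=Q\amalg H_2\amalg\dots\amalg H_r$ would be an open subgroup of $G$ of index $p^{m-1}<p^m$ admitting $Q$ as a free factor, contradicting the minimality of $\mu(G,Q)$. Next, $H_1\cap V$ is open in the pro-$p$ group $H_1$, so $[H_1:H_1\cap V]$ is a power of $p$; since it is also at most $[W:V]=p$, it equals $1$ or $p$. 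It cannot be $1$: if $H_1\le V=Q\amalg L$, then the Kurosh subgroup theorem (the trivial double coset contributing the factor $Q\cap H_1=Q$) would present $H_1$ as $Q\amalg N$, forcing $N=\{1\}$ and $H_1=Q$, contradicting that $H_1$ is infinite. So $[H_1:H_1\cap V]=p$. Finally, applying the Kurosh subgroup theorem to the finitely generated subgroup $H_1\cap V$ of $V=Q\amalg L$ in the same way exhibits $Q$ as a free factor of $H_1\cap V$. Thus $K\defeq H_1$ and $R\defeq H_1\cap V$ work; conjugating the pair back recovers the statement for the given $Q$.

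The only delicate point, and the reason for measuring progress by $\mu(G,Q)$ rather than by the index in $G$ of the various overgroups of $Q$ or by $d(G)$, is that the two moves at our disposal pull in opposite directions: the M.\ Hall hypothesis yields a free-product decomposition only after passing to an open subgroup, which inflates $d(\cdot)$, whereas extracting a freely indecomposable piece forces us to pass to a free factor, which typically has infinite index in $G$. Minimality of $\mu(G,Q)$ is exactly what reconciles these, and it is also what rules out the one degenerate configuration — that the freely indecomposable free factor $H_1$ through which $Q$ passes is $Q$ itself — so that no further recursion is needed.
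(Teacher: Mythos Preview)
Your argument is correct and follows essentially the same route as the paper: choose $V$ of minimal index with $Q$ a free factor, go up to an overgroup $W$ with $[W:V]=p$, decompose $W$ into freely indecomposable factors, and take $K$ to be the factor containing (a conjugate of) $Q$, with $R=K\cap V$. The paper treats your cases $m=1$ and ``$W$ freely indecomposable'' uniformly (they amount to $n=1$ in its decomposition $U=U_1\amalg\cdots\amalg U_n$), and leaves the verification that $[K:R]=p$ implicit, whereas you spell it out via the Kurosh argument ruling out $H_1\subseteq V$.
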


\begin{proof}

Take an open subgroup $V$ of $G$ containing $Q$ as a free factor with $[G : V]$ as small as possible. 
Since $G$ is freely indecomposable, $V$ is a proper subgroup, 
so there exists an open subgroup $U$ of $G$ containing $V$ as a subgroup of index $p$.
We can write $U$ as a free pro-$p$ product 
\begin{equation}
U = U_1 \amalg U_2 \cdots \amalg U_n
\end{equation}
of freely indecomposable factors.
By \cite[Theorem 4.2 (a)]{RZ2}, $Q$ is conjugate to a subgroup of one of these factors,  
so we may assume that $Q \leq U_1$, and set $K \defeq U_1$.
By our choice of $V$, we know that $Q$ is not a free factor of $U$, so $Q \lneq K$ 
and $K$ is infinite as $Q$ is maximal finite.
Applying the Kurosh subgroup theorem to the subgroup $R \defeq K \cap V$ of $V$, 
we see that $Q$ is a free factor of $R$, as required.
\end{proof}

We need the following more detailed formulation of \cite[Theorem 3.6]{WZ}.

\begin{thm} \label{VirtFreeProdThm}

Let $G$ be a finitely generated pro-$p$ group containing an open normal subgroup
$H$ that decomposes as a free pro-$p$ product 
\begin{equation}
H=\coprod_{i=1}^n H_i \amalg F
\end{equation}
where $H_i \ncong \Z_p$ are freely indecomposable pro-$p$ groups for $1 \leq i \leq n$,
the group $F$ is free pro-$p$,
and either $n \geq 2$ or $F$ is nontrivial.
Then there exists a finite connected graph of finitely generated pro-$p$ groups $(\mathcal{G},\Gamma)$ such that:

\begin{enumerate}

\item There exists a vertex $v_0 \in V(\Gamma)$ for which $G \cong \Pi_1(\mathcal{G},\Gamma,v_0)$. \label{cond1}
 
\item For every edge $e \in E(\Gamma)$ we have $H \cap \mathcal{G}(e) = \{1\}$. \label{cond2}

\item There exists a vertex $v \in V(\Gamma)$ such that $H_1 = H \cap \mathcal{G}(v)$.\label{cond3}
 
\end{enumerate}

\end{thm}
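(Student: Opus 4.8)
The plan is to apply the general structure theorem \cite[Theorem 3.6]{WZ} to $G$ and $H$, and then to refine the resulting graph of groups so as to arrange that the finer conclusions \eqref{cond2} and \eqref{cond3} hold. The starting point is that, since $H$ is open and normal in $G$ with a nontrivial free pro-$p$ decomposition, \cite[Theorem 3.6]{WZ} already provides a finite graph of finitely generated pro-$p$ groups $(\mathcal{G},\Gamma)$ with $G \cong \Pi_1(\mathcal{G},\Gamma,v_0)$, realizing the action of $G$ on the pro-$p$ tree associated with the free decomposition of $H$ (more precisely, the standard pro-$p$ tree $S(H)$ on which $H$ acts with trivial edge stabilizers and vertex stabilizers the conjugates of the factors $H_i$ and a trivial-stabilizer part corresponding to $F$). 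First I would recall that construction and check that, because $H$ acts on $S(H)$ with \emph{trivial} edge stabilizers, the induced edge groups $\mathcal{G}(e)$ of the quotient graph of groups for $G$ intersect $H$ trivially; this is precisely condition \eqref{cond2}, and it is essentially built into the fact that $G/H$ is finite acting on the finite quotient graph $H\backslash S(H)$.

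The second step is condition \eqref{cond3}: I must produce a vertex $v$ with $H_1 = H \cap \mathcal{G}(v)$. The vertex stabilizers of the $G$-action on $S(H)$ that meet $H$ nontrivially are exactly the $G$-conjugates of the $H$-stabilizers, and the $H$-stabilizers of vertices are the conjugates (in $H$) of the factors $H_i$ together with trivial groups. Passing to the quotient graph of groups, each $H_i$ (up to conjugacy, and up to the $G/H$-action permuting the conjugacy classes) appears as $H \cap \mathcal{G}(v)$ for a suitable vertex $v$. So I would fix the factor $H_1$, locate the vertex $v$ of $\Gamma$ whose associated vertex group $\mathcal{G}(v)$ has the property that $H \cap \mathcal{G}(v)$ is conjugate in $H$ to $H_1$, and then — using that we are free to choose the base point and the representatives in each $G$-orbit of vertices of $S(H)$ — adjust the choice so that the equality $H_1 = H \cap \mathcal{G}(v)$ holds on the nose rather than just up to conjugacy. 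This adjustment is a standard bookkeeping manoeuvre with graphs of groups (replacing $H_1$ by a conjugate, equivalently changing the chosen lift of $v$ in $S(H)$), and it does not disturb \eqref{cond1} or \eqref{cond2}.

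I expect the main obstacle to be the careful verification that conditions \eqref{cond2} and \eqref{cond3} can be achieved \emph{simultaneously}, i.e. that the normalization needed to get $H_1 = H \cap \mathcal{G}(v)$ exactly can be performed without reintroducing edge groups that meet $H$. The key point making this work is that $H$ acts on $S(H)$ with trivial edge stabilizers, so the edge-group condition \eqref{cond2} is stable under the conjugation adjustments used to fix \eqref{cond3}; once this compatibility is spelled out, the theorem follows by citing \cite[Theorem 3.6]{WZ} for the existence of $(\mathcal{G},\Gamma)$ satisfying \eqref{cond1} and then reading off \eqref{cond2} and \eqref{cond3} from the explicit description of vertex and edge stabilizers of the $G$-action on $S(H)$. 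A secondary point to be careful about is that the hypothesis ``$n \geq 2$ or $F$ nontrivial'' guarantees that the decomposition of $H$ is genuinely nontrivial, so $S(H)$ is not a single point and the graph $\Gamma$ is nondegenerate; this is exactly the hypothesis under which \cite[Theorem 3.6]{WZ} applies, so no extra work is needed there.
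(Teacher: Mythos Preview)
The paper does not supply a proof of this theorem at all: it is introduced with the sentence ``We need the following more detailed formulation of \cite[Theorem 3.6]{WZ}'' and is then used as a black box. So there is no argument in the paper to compare your proposal against; the authors are simply asserting that the stated conclusions can be read off from the construction in \cite{WZ}.

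Your plan is exactly the natural way to substantiate that assertion: invoke \cite[Theorem 3.6]{WZ} to realize $G$ as the fundamental group of a finite graph of pro-$p$ groups coming from the $G$-action on the standard pro-$p$ tree $S(H)$ of the free decomposition of $H$, and then observe that \eqref{cond2} and \eqref{cond3} are forced by the description of vertex and edge stabilizers of that action (trivial $H$-edge-stabilizers give \eqref{cond2}; the $H$-vertex-stabilizers being the $H$-conjugates of the $H_i$ gives \eqref{cond3} after choosing lifts appropriately). Your remark that the conjugation adjustment needed for \eqref{cond3} does not disturb \eqref{cond2}, because edge groups lie in $G$-stabilizers whose intersection with $H$ is trivial regardless of the chosen representatives, is the right point to check. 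In short, your proposal is a correct fleshing-out of what the paper leaves implicit by citation.
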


With \thmref{VirtFreeProdThm} at hand, we can finally prove the following.

\begin{prop} \label{OrderPProp}  

Let $G$ be a finitely generated freely indecomposable infinite pro-$p$ M. Hall group.
Then $G$ is torsion-free.

\end{prop}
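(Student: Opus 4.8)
The plan is to argue by contradiction: assume $G$ is a finitely generated freely indecomposable infinite pro-$p$ M. Hall group that contains torsion, and derive a contradiction using \lemref{maximal finite}, \propref{index p}, and \thmref{VirtFreeProdThm}. By \lemref{maximal finite}, $G$ has a maximal finite subgroup, and since $G$ has torsion we may pick a nontrivial maximal finite subgroup $Q$ of $G$. Apply \propref{index p} to obtain a finitely generated freely indecomposable infinite subgroup $K$ of $G$ with a subgroup $R$ of index $p$ such that $Q$ is a free factor of $R$. Write $R = Q \amalg F$ for some free pro-$p$ group $F$; since $R$ is infinite, either $F$ is nontrivial, and in any case the hypotheses of \thmref{VirtFreeProdThm} are met (with $n=1$, $H_1 = Q$ — but note $Q \ncong \Z_p$ since $Q$ is a nontrivial finite $p$-group, so this is legitimate) with $H \defeq R$, an open normal subgroup of $K$ of index $p$.

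First I would feed this data into \thmref{VirtFreeProdThm} to get a finite connected graph of finitely generated pro-$p$ groups $(\mathcal{G},\Gamma)$ with $K \cong \Pi_1(\mathcal{G},\Gamma,v_0)$, with $R \cap \mathcal{G}(e) = \{1\}$ for every edge $e$, and with a vertex $v$ such that $Q = R \cap \mathcal{G}(v)$. The point of condition \eqref{cond2} is that the edge groups are "transversal" to $R$, which forces the edge groups to be small relative to $K$: since $[K:R] = p$, each edge group $\mathcal{G}(e)$ embeds into $K/R \cong \Z/p$, hence is trivial or cyclic of order $p$. Then I would exploit the structure of $K$ as an amalgam/HNN-type construction over such tiny edge groups. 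The key tension is that $K$ is M. Hall by \propref{HallHeredProp} (being a finitely generated subgroup of the M. Hall group $G$) and freely indecomposable, yet the graph-of-groups decomposition — combined with the fact that the M. Hall property should force some finite-index subgroup of $K$ to split as a free product having $\mathcal{G}(v)$ as a free factor — will contradict either the freely indecomposability of $K$, the maximality of $Q$, or the minimality built into the choice of $V$ in \propref{index p}.

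Concretely, the argument I expect to run is: the M. Hall property gives an open $W \leq K$ with $Q$ a free factor of $W$; using the graph-of-groups structure and the triviality/smallness of edge groups, one shows $\mathcal{G}(v)$ (or a conjugate of it, which properly contains $Q$ by construction — it contains $Q = R \cap \mathcal{G}(v)$ but also $R$-part witnesses that $\mathcal{G}(v) \supsetneq Q$ since $\mathcal{G}(v) \cap R = Q$ has index dividing $p$ in $\mathcal{G}(v)$ so $\mathcal{G}(v)$ is infinite or has order $p|Q|$) must already be a free factor of an open subgroup of $K$ with smaller index than allowed, or one extracts a finite subgroup of $K$ strictly larger than $Q$, contradicting maximality of $Q$ in $G$. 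I would also use \cite[Theorem 9.1.6]{RZ} (a nontrivial free pro-$p$ product is infinite) and \cite[Theorem 4.2 (a)]{RZ2} (conjugacy of finite subgroups into free factors) as in the proof of \propref{index p}, together with \cite[Corollary B]{WZ} cited in the introduction, to pass between virtual free-product decompositions and actual ones on the torsion-free part.

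The main obstacle will be the bookkeeping inside \thmref{VirtFreeProdThm}: translating condition \eqref{cond2} (edge groups meet $R$ trivially) and condition \eqref{cond3} ($Q = R \cap \mathcal{G}(v)$) into a genuine constraint that a maximal finite subgroup cannot survive — in effect, showing that the graph-of-groups carrying $Q$ forces $Q$ to grow when one conjugates or passes to the vertex group, so that $\mathcal{G}(v)$ supplies a finite subgroup of $G$ strictly containing $Q$. Handling the bottom case $n = 0$ (when $R$ is free, so $Q$ is trivial — but that is excluded since $Q$ is nontrivial) is automatic, but one must be careful that $Q \ncong \Z_p$ is needed to invoke \thmref{VirtFreeProdThm} with $H_1 = Q$, which holds precisely because $Q$ is a nontrivial \emph{finite} group.
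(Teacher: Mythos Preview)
Your framework is correct and matches the paper's: assume torsion, take a maximal finite $Q$ via \lemref{maximal finite}, pass via \propref{index p} and \propref{HallHeredProp} to a freely indecomposable infinite M.~Hall group $K$ with an index-$p$ open normal subgroup $R$ in which $Q$ is a free factor, then feed this into \thmref{VirtFreeProdThm}. Two small corrections: the complement of $Q$ in $R$ need not be free (write $R = Q \amalg L$ and refine $L$ into freely indecomposable non-$\Z_p$ pieces plus a free part before invoking the theorem), and you should take the graph $\Gamma$ to be \emph{minimal} --- this is needed at the very end.

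The genuine gap is in your endgame. You correctly observe that $Q = R \cap \mathcal{G}(v)$ has index dividing $p$ in $\mathcal{G}(v)$, so $\mathcal{G}(v)$ is \emph{finite} (not ``infinite or has order $p|Q|$''). You then note that if $\mathcal{G}(v) \supsetneq Q$ this contradicts maximality of $Q$. But you never treat the remaining case $\mathcal{G}(v) = Q$, and that is precisely where the contradiction lies. The paper's finish is clean: once $\mathcal{G}(v) = Q$, any edge $e$ incident to $v$ satisfies $\mathcal{G}(e) \leq \mathcal{G}(v) = Q \leq R$, while condition~\eqref{cond2} gives $\mathcal{G}(e) \cap R = \{1\}$; hence $\mathcal{G}(e) = \{1\}$. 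A trivial edge group now splits $K$ nontrivially: if $e$ is a bridge, minimality of $\Gamma$ ensures both sides contribute nontrivially and $K$ is a proper free product; if $e$ lies outside some spanning tree, it contributes a free $\Z_p$ factor. Either way $K$ is freely decomposable, the desired contradiction. Your suggested detours through \cite[Corollary B]{WZ} and ``smaller index than allowed'' are unnecessary here; the whole argument closes with this one observation about the edge at $v$.
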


\begin{proof}

We assume toward a contradiction that $G$ has torsion,
and use \lemref{maximal finite} to obtain a nontrivial maximal finite subgroup $Q$ of $G$.
As $G$ is M. Hall, there exists an open subgroup $H$ of $G$ such that $H = Q \amalg L$ for some $L \leq_c H$. 
By \propref{index p} and \propref{HallHeredProp} we may assume that $[G : H] = p$.

Take the smallest $\Gamma$ as in \thmref{VirtFreeProdThm},
and (using property \ref{cond3}) select a $v \in V(\Gamma)$ 
such that $Q = H \cap \mathcal{G}(v)$.
It follows that $\mathcal{G}(v)$ is finite, so by maximality, $Q = \mathcal{G}(v)$.
Choosing an $e \in E(\Gamma)$ incident to $v$ we see that 
\begin{equation}
\mathcal{G}(e) \leq \mathcal{G}(v) = Q \leq H
\end{equation}
but $H \cap \mathcal{G}(e) = \{1\}$ by property \ref{cond2}, so $\mathcal{G}(e) = \{1\}$.

If $e$ is a bridge, the minimality of $\Gamma$ implies that the fundamental group of each of the connected components created by removing $e$ is nontrivial, so we arrive at a contradiction to the indecomposability of $G$.
If $e$ is not a bridge, there exists a spanning tree of $\Gamma$ avoiding $e$,
so $e$ contributes a free $\mathbb{Z}_p$ factor to $G$, and we have the same contradiction once again.
\end{proof}

We are now in a position to prove \thmref{RibThm}.

\begin{proof} 

Let $G$ be a finitely generated freely indecomposable infinite pro-$p$ M. Hall group.  
Take $K$ to be a nontrivial procyclic subgroup of $G$ and let $U$ be an open subgroup of $G$ having $K$ as a free factor.
By \propref{OrderPProp}, $G$ is torsion-free, so $U = K$ by \cite[Corollary B]{WZ}, and $G$ is virtually free.
By a theorem of Serre from \cite{Ser0}, $G \cong \mathbb{Z}_p$ as required.
\end{proof}

\section*{Acknowledgments}
Mark Shusterman is grateful to the Azrieli Foundation for the award of an Azrieli Fellowship.
The first author was partially supported by a grant of the Israel Science Foundation with cooperation of UGC no. 40/14.
The second author was partially supported by CAPES and CNPq.
We would like to thank Lior Bary-Soroker, and Alexander Zalesskii for helpful discussions.
The second author is grateful to Lior Bary-Soroker for hospitality during
his visits to Tel-Aviv University; this project has started during these visits.

\end{document}